
\documentclass[final]{templates/SciChinaMath/SCIYA2023enOL}
\usepackage{graphicx}
\usepackage{amssymb}
\usepackage{amsthm}
\usepackage{amsmath}
\usepackage{bm}
\usepackage{enumerate}
\usepackage{nicematrix}
\usepackage{mathrsfs} 
\usepackage{input/boondox-cal} 
\usepackage{booktabs} 
\usepackage{multirow}
\usepackage{url}
\usepackage{lineno}
\usepackage{hyperref}
\usepackage{diagbox}
\hypersetup
{
    colorlinks=true,
    linkcolor=blue,
    filecolor=blue,
    urlcolor=blue,
    citecolor=cyan,
}
\usepackage{verbatim}
%
\newcommand\norm[2]{\left\Vert #1 \right\Vert _{#2}} 
\newcommand\innprod[2]{\left\langle #1,#2 \right\rangle} 
\newcommand\floor[1]{\lfloor #1 \rfloor} 
\newcommand\ceil[1]{\lceil #1 \rceil} 
\newcommand\ints[0]{\cap} 
\newcommand\fform[0]{\mathlcal{f}} 
\newcommand\R[0]{\mathbb{R}}

\newcommand\N[0]{\mathbb{N}}

\theoremstyle{plain}
  \newtheorem{thm}{Theorem}[section]
  \newtheorem{lem}[thm]{Lemma}
  \newtheorem{prop}[thm]{Proposition}
  
\theoremstyle{definition}
  \newtheorem{defn}[thm]{Definition}

  \newtheorem{rem}[thm]{Remark}

\begin{document}
\ensubject{mathematics}

\ArticleType{ARTICLES}
\Year{2023}
\Month{October}%
\Vol{65}
\No{1}
\BeginPage{1} %
\DOI{Manuscript for review}

\title{Native spaces and generalization of Wu functions}
{Native spaces and generalization of Wu functions}

\author[1,2]{Yixuan Huang}{yxh@mail.bnu.edu.cn}
\author[3,4]{Zongmin Wu}{zmwu@fudan.edu.cn}
\author[5,6]{Shengxin Zhu*}{{Shengxin.Zhu@bnu.edu.cn}}

\AuthorMark{Huang Y.}

\AuthorCitation{Y. Huang, Z. Wu, S. Zhu, et al}

\address[1]{School of Mathematics, Beijing Normal University, China}
\address[2]{Laboratory of Mathematics and Complex Systems, Ministry of Education, China}
\address[3]{Shanghai Key Laboratory for Contemporary Applied Mathematics,\\ School of Mathematical Sciences, Fudan University, Shanghai, China;\\ Shanghai Center for Mathematical Sciences, Shanghai, China}
\address[4]{School of Big Data and Statistics, Anhui University, Hefei, China}
\address[5]{Research Center for Mathematics, Advanced Institute of Natural Science, \\ Beijing Normal University, Zhuhai {\rm 519087}, China}
\address[6]{Guangdong Provincial Key Laboratory of Interdisciplinary Research and Application for Data Science,\\ BNU-HKBU United International College, Zhuhai {\rm 519087}, China}

\abstract{We prove that the native space of a Wu function is a dense subspace of a Sobolev space. An explicit characterization of the native spaces of Wu functions is given. Three definitions of Wu functions are introduced and proven to be equivalent. Based on these new equivalent definitions and the so called $f$-form tricks, we can generalize the Wu functions into the even-dimensional spaces $\R^{2k}$, while the original Wu functions are only defined in the odd-dimensional spaces $\R^{2k+1}$. Such functions in even-dimensional spaces are referred to as the `missing Wu functions'. Furthermore we can generalize the Wu functions into `fractional'-dimensional spaces. We call all these Wu functions the generalized Wu functions. The closed form of the generalized Wu functions are given in terms of hypergeometric functions. Finally we prove that the Wu functions and the missing Wu functions can be written as linear combinations of the generalized Wendland functions.
}

\keywords{Radial functions, Native space, Reproducing kernel Hilbert space, Fractional calculus, Convolution, $f$-form operator}

\MSC{33C90, 65J05, 41A05, 41A30, 41A65, 41A63}

\maketitle

\section{Introduction}
    The late 1980s and early 1990s have witnessed a thrive of \emph{radial basis function (RBF)} network analysis \cite{PJ93AP,HC93AP,MH92AP}. The \emph{radial symmetry} is a welcomed property to handle high dimensional problems. It has been noticed in the 1990s that RBF interpolation with lower order polynomials (or learning algorithms with regularization \cite{GF90NE}) are equivalent to multi-layer networks \cite{PT90RE}. The seminal rigorous error analysis of such an approximation has been developed by Wu and Schaback \cite{WZ93LO}. To \emph{sparsify} or \emph{prune} a dense network for better efficiency, \emph{compactly supported} functions are preferred. Wu firstly constructed a class of \emph{positive definite} compactly supported radial basis functions (CSRBFs) in terms of polynomials \cite{WZ95CO}. Positive definite radial basis functions result in symmetric positive definite interpolation matrices which are easy to solve. A radial basis function is positive definite if its Fourier transform is positive. Using Fourier transform to characterize a positive definite function dates back to Mathias \cite{MM23UB}, Bochner \cite{BS33MO} and followed by von Neumann and Schoenberg \cite{NJ41FO}, and among others. It provides a simple way to study the property of the interpolation system. One of the most famous positive definite radial basis functions is the Gaussian $\exp(-\varepsilon^2\norm{x}{2}^2)$ with a positive Fourier transform $(\sqrt{2} \varepsilon)^{-d}\exp(-\norm{\omega}{2}^2/(4\varepsilon^2))$. A Gaussian is a positive definite function in all integer-dimensional spaces. However, it had been tremendously difficult to find a compactly supported positive definite function in terms of polynomials until \cite{WZ95CO}. It had kept many leading researchers busy for many years. The difficulty of finding such functions may lie in two questions. On one hand, in general, it is not easy to find the Fourier transform of a compactly supported function in high dimensional spaces. On the other hand, positive definiteness depends on dimension. A positive definite function in $\R^k$ is not necessarily positive definite in $\R^m$ for $m\neq k$. For example,  the \emph{truncated power functions}, which are also called \emph{Askey's power functions} \cite{AR73RA,LG86AF}
    \begin{linenomath}\begin{equation}
        \varphi_\ell(x)=(1-\norm{x}{2})_+^\ell= 
        \begin{cases}
        (1-\norm{x}{2})^\ell &\mbox{ for }1-\norm{x}{2}\geq 0,\\
        0 &\mbox{ for }1-\norm{x}{2}\leq 0.\end{cases}
        \label{def:askey}
    \end{equation}\end{linenomath}
    Such a function can also be represented as the $ReLU((1-\norm{x}{2})^\ell)$ \emph{activation function} in machine learning. It is positive definite on $\R^d$ if $\ell\geq\floor{d/2}+1$, where $\floor{\cdot}$ is the floor function. 

    In Wu's neat construction, he introduced two operators which significantly remove many involved computations with Fourier transforms in high dimensional spaces. One is the integral operator 
    \begin{linenomath}\begin{equation}
        \mathscr{I}\varphi(r):=\int_r^\infty t\varphi(t)\ dt,\mbox{ for }r\geq 0 \mbox{ and }t\varphi(t)\in L^1(\R_+),\label{eq:dim_walk_I}
        \end{equation}\end{linenomath}
    and the other is the differential operator
    \begin{linenomath}\begin{equation}
        \mathscr{D}\varphi(r):=-\frac{1}{r}\frac{d}{dr}\varphi(r),\mbox{ for }r\geq 0\mbox{ and }\varphi\in C^2(\R_+).\label{eq:dim_walk_D}
        \end{equation}\end{linenomath}
    These two operators enjoy the so called \emph{dimensional walk} properties of Fourier transform (see bellow Theorem \ref{thm:walk}). Precisely, let $\Phi(x)=\varphi(\norm{x}{2})\in L^1(\R^d)$ be a radial function and $\mathscr{F}_d\varphi(\norm{\omega}{2}):=\hat{\Phi}(\omega)$ be the $d$-variate Fourier transform of $\Phi$. Then under some mild conditions, $\mathscr{F}_d(\varphi) =\mathscr{F}_{d-2}(\mathscr{I}\varphi)$ and $ \mathscr{F}_d(\varphi)=\mathscr{F}_{d+2}(\mathscr{D}\varphi)$. This implies that if we have a positive definite function in $\R$, then we can construct positive definite functions with the $\mathscr{D}$ or $\mathscr{I}$ operator in $\R^{2k+1}$. 
  
    Recall that the Fourier transform of the convolution of two functions is the product of their Fourier transforms. Then it is easy to construct a positive definite function in $\R$ by self-convolution. Wu's construction started with a very smooth function by self-convolution $g_\ell:=f_\ell*f_\ell$, where $f_\ell=(1-r^2)_+^\ell$ is a truncated function in $\R$. Then according to the dimension walk property, $\mathscr{D}g_\ell$ is a positive definite CSRBF in $\R^3$ and $\mathscr{D}^2g_\ell$ is a positive definite CSRBF in $\R^5$, if $\ell$ is big enough. In such a way, Wu constructed a class of positive definite CSRBFs in terms of polynomials in $\mathbb{R}^{2k+1}$. Notice that he started with a very smooth function $g_\ell$ by choosing a big enough $\ell$, and then obtained some polynomials of lower degrees that are positive definite in higher dimensions. In this way, he constructed a piecewise polynomial of degree $4\ell-2k+1$ that is positive definite in $\R^{2k+1}$ and in $C^{2\ell-2k}$. Therefore, he proposed the interesting question on how to construct positive definite CSRBFs with minimal polynomial degree when prescribed space dimension and smoothness. Wendland solved this problem by starting with the Askey's power functions and applying the integral operator $\mathscr{I}$. He chose $\ell$ such that the Askey's power function is positive definite on $\R^d$. After applying the operator $\mathscr{I}$ on Askey's power function, he obtained a smoother positive definite CSRBF on a lower dimension. In such a way the $\mathscr{I}$ operator brings the later well-known \emph{Wendland functions} \cite{WH95PI}, which are compactly supported piecewise polynomials of \emph{minimal degrees} when given space dimension and smoothness. More precisely, when prescribed space dimension $2k+1$ and smoothness $C^{2\ell-2k}$, the corresponding Wendland function is of degree $3\ell-2k+1$, which is $\ell$ degrees lower than Wu's construction.
  
    Soon Wu found that interpolation with positive definite CSRBFs of the same scale violates the so-called \emph{Strang-Fix condition} \cite{SG71AF,WZ97CO}, which is a desirable condition for a good approximation. Another counterexample is to approximate constant functions with Gaussian summations \cite{ZS20SU}. This is one of the essential reasons why learning with CSRBFs should go to deep: multi-scale \cite{WH17MU,GQ10MU}, multi-level \cite{NF99MU}, multi-steps \cite{FM96MU}, or go to variable shapes \cite{ZS15CO,BM15IN} or \emph{numerical basis} functions with \emph{homogenization} \cite{LX21GE}. In addition to the applications in approximation and learning theory, positive definite RBFs also provide tools for probability theory as characteristic functions of spherically symmetric probability measures \cite{GT01CR}. For a review on the history of compactly supported RBFs, the reader is referred to a noticeable technique report \cite{ZS12CO}\cite[Chapter 2]{ZS14NU} and publication \cite{GT02CO}. Now RBFs have been proved to be a powerful tool in the field of high dimensional scattered data approximation \cite{WH04SC}. Examples of successful application include digital terrain modelling \cite{HR71MU}, mesh-free methods for solving partial differential equations \cite{FE13AH,FG07ME,FZ14RE},  machine learning \cite{SR06KE,GM01CL,ZD03CA,XY19GE}, neural networks \cite{BA96ME,FJ96RA,MG18RA,PT90RE}, etc.. In particular, the \emph{Hermite-Birkhoff interpolation} serves the foundation of the \emph{symmetric collocation} methods \cite{WZ92HE}. And the \emph{moving least square theory} for RBF which guarantees the Strang-Fix condition is closely related to the increasingly important \emph{smoothed particle hydrodynamics} methods \cite{SS14FE,SS16EF}.

    Among these theoretical and applicational advances related to RBFs, Wu's construction plays a unique role in RBF theory, not only because that it is the first construction of positive definite CSRBFs in terms of polynomials, but also because that the tools introduced in the construction have resulted in many other influential works. However, 30 years after Wu's construction, the mathematical theory on Wu functions is neither crystal clear nor complete:
    \begin{enumerate}[(1)]
        \item There is a shortage of explicit characterization of the \emph{native spaces} of Wu functions. Or can a Wu function recover a Sobolev space? 
        \item The original Wu functions were only given in odd-dimensional spaces because the step size of dimension walk is $2$, and functions in even-dimensional spaces are \emph{missing}. How can one generalize the Wu functions to complete the mathematics?
        \item Is there any closed formulas for Wu functions and what is the relationship between Wu functions and Wendland functions?
    \end{enumerate}
    Similar problems related to Wendland functions have been solved by Buhmann \cite{BM00AN}, Schaback \cite{SR11MI} and Hubbert \cite{HS12CL}. Whereas problems related to Wu functions still remain. One reason may lie in that the seminal work by Wu and Schaback \cite{SR96OP,WZ95CO} involves sophisticated mathematics, despite it is well cited, less people can really follow its details.  Besides, Gneiting pointed out that Wu's construction allows a larger space of scale mixtures \cite{GT01CR}, thus the local approximation spaces with Wu functions involves more general positive definite radial functions. From this perspective, it is also necessary to understand the Wu functions better.
    
    Following Wu's \cite{WZ95CO} and Schaback's \cite{SR11MI} work, this paper is devoted to fill ALL these gaps and answer ALL these questions. It turns out that the \emph{native space} of a Wu function is a \emph{dense subspace} of a Sobolev space. The inclusion (Theorem \ref{thm:wu_subspace_sobolev}) is obtained by comparing the kernel functions of the two spaces in the \emph{phase domain}. The Sobolev space is defined (in \eqref{eq:Hsd}) according to the \emph{harmonic analysis} approach, by the Fourier transform of the kernel functions. This seems differ with but is equivalent to the commonly used definition by weak derivatives in the numerical analysis community. And the difficulty lies in that how to find the Fourier transform of a Wu function (Theorem \ref{thm:fourier_wu}) and how to compare it with that of a Sobolev kernel (in \eqref{eq:imq}). The Fourier transform of Wu functions were in fact already given by Wu and Schaback in \cite[Sec. 7]{SR96OP}. Here we simplify computations via two tricks: we first slightly modify Wu's construction by re-scaling for simplicity; and then we employ the so called \emph{$f$-form operator} tricks introduced by Schoenberg \cite{SI38ME} in an early paper in \emph{Ann. Math.}. The denseness (Theorem \ref{thm:dense}) is based on an explicit characterization of the native spaces of Wu functions (Theorem \ref{thm:native_space_wu}). The explicit characterization is based on a characterization of the native spaces with kernels constructed by self-convolution (Theorem \ref{thm:native_space_conv}). All these derivations are related to some classical results on the \emph{reproducing kernel Hilbert space} \cite{AN50TH} and native space \cite{SR99NA,SR00AU,WH04SC}. 

    The $f$-form tricks can also transform the integral operator $\mathscr{I}$ and the differential operator $\mathscr{D}$ to two more general operators (Proposition \ref{prop:I1_I-1}). In particular, we can construct the $\mathscr{D}^{1/2}$ and $\mathscr{I}^{1/2}$ operators via the two more general operators and the $f$-form tricks, they can `walk' through all integer-dimensional spaces (Theorem \ref{thm:gen_walk}) and lead to the `missing' Wu functions. This makes the generalization of Wu functions possible. The generalization remains the key properties of the original Wu functions (Theorem \ref{thm:gen_wu_func}). The closed form of generalized Wu functions are given in terms of hypergeometric series (Theorem \ref{thm:closed_form_wu_func}). And the derivation of this closed form is based on the explicit formulas of special Wu functions of the form $\varphi_{\ell,\ell}$ (Theorem \ref{thm:phill}). Using the closed forms, we can give an explicit representation of the generalized Wendland functions in terms of linear combinations of generalized Wu functions (Theorem \ref{thm:wu_wendland}).
    

    

\section{Radial functions and transforms}
    
    Before diving deep into the question, we introduce some notations and results in \cite{SR11MI,SR96OP}.  
    Given a function $\varphi$ defined on $\R_+=[0,\infty)$ and a positive integer $d$, a $d$-variate function 
    \begin{linenomath}\begin{equation*}
        \Phi(x)=\varphi(\norm{x}{2}),\quad x\in\R^d,
        \end{equation*}\end{linenomath}
    arises as a radial function in $\R^d$. The Fourier transform of $\Phi$, denoted by $\hat{\Phi}$, is
    \begin{linenomath}\begin{equation*}
        \hat{\Phi}(\omega)=(2\pi)^{-d/2}\int_{\R^d}\Phi(x)\exp(-i\innprod{\omega}{x})\ dx,\quad\omega\in\R^d.
        \end{equation*}\end{linenomath}
    Computing the Fourier transform of an arbitrary function is not an easy task. But for radial function $\Phi(x)=\varphi(\norm{x}{2})$, things are much simpler because its Fourier transform $\hat{\Phi}$ is still a radial function \cite[Cor. 1.2]{SE75IN}, and $\hat{\Phi}$ could be computed with a one-dimensional integral involving $\varphi$ \cite[Thm 3.3]{SE75IN}
    \begin{linenomath}\begin{equation*}
        \begin{gathered}
            \hat{\Phi}(\omega)=\mathscr{F}_d\varphi(\norm{\omega}{2}),\quad \omega\in\R^d,\\
            \mathscr{F}_d\varphi(r):=r^{-(d-2)/2}\int_0^\infty\varphi(t)t^{d/2}J_{(d-2)/2}(tr)\ dt,\quad r\in\R_+,\ 
            \end{gathered}
        \end{equation*}\end{linenomath}
    where $J_\nu$ is the \emph{Bessel function of the first kind}, $H_\nu$ is defined by 
    \begin{equation}\label{defn:Jnu_Hnu}
        (x/2)^{-\nu}J_\nu(x)=H_\nu(x^2/4)=\sum_{k=0}^\infty\frac{(-x^2/4)^k}{k!\Gamma(k+\nu+1)}=\frac{{_0F_1}(\nu+1;-x^2/4)}{\Gamma(\nu+1)},
        \end{equation}
    and ${_0F_1}$ is the \emph{generalized hypergeometric function} \cite[2.1]{BW35GE}. The function $\mathscr{F}_d\varphi$ defined on $\R_+$ is called the $d$-variate Fourier transform of $\varphi$. It turns out that the inverse Fourier transform of $\Phi$, denoted by $\Phi^\vee$, is identical with $\hat{\Phi}$ if $\Phi$ is a radial function \cite{SR96OP}, so there is no need for extra notation for the $d$-variate inverse Fourier transform.
    
    The convolution of two $d$-variate radial functions $\Phi(x)=\varphi(\norm{x}{2})$ and $\Psi(x)=\psi(\norm{x}{2})$ turns out to be a radial function, so we define the $d$-variate convolution operator by
    \begin{linenomath}\begin{equation*}
        \Phi*\Psi(x)=:\mathscr{C}_d(\varphi,\psi)(\norm{x}{2}),\quad x\in\R^d.
        \end{equation*}\end{linenomath}
    As a special case, consider the convolution of functions from the \emph{tempered radial test function space} $\mathscr{S}$ \cite{SR96OP}. This space consists of infinitely differentiable functions on $\R_+$ such that all derivatives decay faster than any polynomial at infinity, namely
    \begin{linenomath}\begin{equation*}
        \mathscr{S}=\left\{f\in C^\infty(\R_+):\forall \alpha,\beta\in\mathbb{N},\sup_{x\in\R_+}\abs{x^\alpha\cdot\frac{d^\beta f}{dx^\beta}}<\infty\right\}.
        \end{equation*}\end{linenomath}
    Notice that $\mathscr{S}$ is slightly different from the classical \emph{Schwartz space} since the functions from these two spaces are defined on different domains. For functions $\varphi$ and $\psi$ in $\mathscr{S}$, we can equivalently define the $d$-variate convolution operator by
    \begin{linenomath}\begin{equation*}
        \mathscr{C}_d(\varphi,\psi)(r):=(2\pi)^{d/2}\mathscr{F}_d[(\mathscr{F}_d\varphi)\cdot(\mathscr{F}_d\psi)](r),\quad r\in\R_+,
        \end{equation*}\end{linenomath}
    according to the convolution theorem \cite[Thm 5.16(2)]{WH04SC}.
    
    The operators $\mathscr{I}$ and $\mathscr{D}$, defined in \eqref{eq:dim_walk_I} and \eqref{eq:dim_walk_D} respectively, play essential roles in constructing CSRBFs in high dimensions. The \emph{dimension walk} property \cite{WZ95CO,WH04SC,SR96OP} associated with them introduces ways to find positive definite functions in higher and lower dimensions.
    \begin{thm}(\cite[Theorem 9.6]{WH04SC}).  \label{thm:walk}
        Suppose $\varphi\in C(\R_+)$.
        \begin{enumerate}[(1)]
            \item If $t\mapsto t^{d-1}\varphi(t)\in L^1(\R_+)$ and $d\geq 3$, then $\mathscr{F}_d\varphi=\mathscr{F}_{d-2}\mathscr{I}\varphi$.
            \item If $\varphi\in C^2(\R_+)$ and $t\mapsto t^d\varphi'(t)\in L^1(\R_+)$, then $\mathscr{F}_d\varphi=\mathscr{F}_{d+2}\mathscr{D}\varphi$.
        \end{enumerate}
    \end{thm}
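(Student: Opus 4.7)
The plan is to derive both identities by a single integration by parts powered by the Bessel recurrence $\frac{d}{dx}[x^\nu J_\nu(x)] = x^\nu J_{\nu-1}(x)$. Under the substitution $x = tr$ this reads
\begin{equation*}
\frac{d}{dt}\bigl[t^\nu J_\nu(tr)\bigr] = r\, t^\nu J_{\nu-1}(tr),
\end{equation*}
which converts a Bessel kernel of order $\nu$ to one of order $\nu-1$. Since the Hankel-type representation of $\mathscr{F}_d\varphi$ uses Bessel order $(d-2)/2$, this identity is precisely the analytic mechanism that trades one unit of Bessel index for two units of dimension, and both parts will boil down to applying it with the appropriate $\nu$ and handling the resulting boundary terms.

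For part (1), I would start from the definition of $\mathscr{F}_{d-2}\mathscr{I}\varphi(r)$ and rewrite $t^{(d-2)/2} J_{(d-4)/2}(tr) = r^{-1}\frac{d}{dt}\bigl[t^{(d-2)/2} J_{(d-2)/2}(tr)\bigr]$ via the recurrence with $\nu = (d-2)/2$. Integrating by parts in $t$ and using $\frac{d}{dt}\mathscr{I}\varphi(t) = -t\varphi(t)$ from the definition of $\mathscr{I}$, the interior term collapses to exactly $\mathscr{F}_d\varphi(r)$. The boundary at $t=0$ vanishes because $t^{(d-2)/2} J_{(d-2)/2}(tr) = O(t^{d-2})$ from the power series of $J_\nu$, and $d \geq 3$. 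At $t=\infty$, the Bessel asymptotic $J_\nu(x) = O(x^{-1/2})$ gives $t^{(d-2)/2} J_{(d-2)/2}(tr) = O(t^{(d-3)/2})$, while the hypothesis $t^{d-1}\varphi(t) \in L^1$ yields the tail estimate
\begin{equation*}
\mathscr{I}\varphi(t) \leq \int_t^\infty s^{d-1}|\varphi(s)|\,s^{2-d}\,ds \leq t^{2-d}\,\bigl\|s^{d-1}\varphi\bigr\|_{L^1(t,\infty)},
\end{equation*}
and the product vanishes.

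For part (2), I would expand $\mathscr{D}\varphi(t) = -\varphi'(t)/t$ inside $\mathscr{F}_{d+2}\mathscr{D}\varphi(r)$ and integrate by parts in the opposite direction: put $u = t^{d/2} J_{d/2}(tr)$ and $dv = -\varphi'(t)\,dt$, so that the same recurrence (now with $\nu = d/2$) supplies $du = r\, t^{d/2} J_{(d-2)/2}(tr)\,dt$, and the interior term becomes $\mathscr{F}_d\varphi(r)$. The boundary at $0$ vanishes from $t^{d/2}J_{d/2}(tr) = O(t^d)$, while the boundary at $\infty$ requires $\varphi(t)\cdot t^{(d-1)/2} \to 0$; this follows from $t^d\varphi'(t) \in L^1$ via the Cauchy-type tail bound $|\varphi(t_2)-\varphi(t_1)| \leq t_1^{-d}\,\|s^d\varphi'\|_{L^1(t_1,\infty)}$, which forces $\varphi$ to tend to a limit (necessarily zero, by the integrability itself under the standing assumption that $\Phi \in L^1(\R^d)$ is the setting of interest).

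The main obstacle is not conceptual but technical: ensuring the boundary at $t = \infty$ beats the Bessel growth $t^{(d-3)/2}$ (resp.\ $t^{(d-1)/2}$). This dimension-sensitive balance is exactly why part (1) needs $t^{d-1}\varphi(t)\in L^1$ with $d\geq 3$, while part (2) instead imposes $t^d\varphi'(t)\in L^1$; both are calibrated to the $O(t^{-1/2})$ decay of $J_\nu$ together with the relevant power of $t$ sitting in the kernel.
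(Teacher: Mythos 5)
Your argument is correct and is essentially the standard proof of this dimension-walk theorem: the paper itself gives no proof but cites Wendland, whose argument likewise rests on the Bessel recurrence $\frac{d}{dx}\left[x^\nu J_\nu(x)\right]=x^\nu J_{\nu-1}(x)$ combined with integration by parts (equivalently Fubini with the integrated form of the recurrence) and the same boundary estimates at $0$ and $\infty$. The only soft spot is your claim in part (2) that the limit of $\varphi$ at infinity is zero — this does not follow from the stated hypotheses alone (take $\varphi\equiv 1$), but it is implicitly required for $\mathscr{F}_d\varphi$ to exist at all, and once it is granted your own tail bound upgrades to $|\varphi(t)|\le t^{-d}\int_t^\infty s^d|\varphi'(s)|\,ds=o(t^{-d})$, which comfortably beats the $t^{(d-1)/2}$ growth of the Bessel factor.
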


    To reveal connections among the operators defined above, we introduce the $f$-form of a function and the $f$-form of an operator. The concept of $f$-form stems from Schoenberg's work in order to characterize completely monotone functions \cite{SI38ME}. Later it is developed by Wu and Schaback as an important strategy to construct positive definite RBFs in higher space dimensions \cite{SR96OP,SR11MI}, and the construction strategy is summarized by Hubbert \cite{HS12CL}. In this paper, we formulate the $f$-form of a function and an operator as follows.
    \begin{defn}\label{defn:f-form}
        Let $X:=\{f:\R_+\to\R\}$ be the set of all functions defined on $\R_+$. For any function $\varphi\in X$, the \emph{$f$-form} of $\varphi$, denoted by $\fform\varphi$, is defined by
        \begin{linenomath}\begin{equation*}
            \fform\varphi(r):=\varphi(\sqrt{2r}).
            \end{equation*}\end{linenomath}
        The \emph{inverse $f$-form} of $\varphi$, denoted by $\fform^{-1}\varphi$, is defined by
        \begin{linenomath}\begin{equation*}
            \fform^{-1}\varphi(r)=\varphi(r^2/2).
            \end{equation*}\end{linenomath}
        The operators $\fform$ and $\fform^{-1}$ are called the \emph{$f$-form operator} and \emph{inverse $f$-form operator} respectively. For any operator $A:X\to X$, the $f$-form of $A$ is defined by $\fform A\fform^{-1}$, that is, for any $\varphi\in X$,
        \begin{linenomath}
            \begin{equation*}
                (\fform A\fform^{-1})\varphi=\fform(A(\fform^{-1}\varphi)).
            \end{equation*}
        \end{linenomath}
        \end{defn}
    
    One reason for introducing the $f$-form operator is that it turns the operator $\mathscr{D}$ into the usual differential operator $-d/dr$, providing convenience for computation.
    \begin{prop}\label{prop:I1_I-1}
        Suppose $\varphi,\psi$ are functions on $\R_+$ such that $\fform\mathscr{D}\fform^{-1}\varphi$ and $\fform\mathscr{I}\fform^{-1}\psi$ exists. Then 
        \begin{linenomath}\begin{equation*}
                \begin{gathered}
                    \fform\mathscr{D}\fform^{-1}\varphi(r)=-\frac{d}{dr}\varphi(r)=:D\varphi(r)=I_{-1}\varphi(r),\\
                    \fform\mathscr{I}\fform^{-1}\psi(r)=\int_r^\infty \psi(x)\ dx=:I_1\varphi(r).
                \end{gathered}
            \end{equation*}\end{linenomath}
        \end{prop}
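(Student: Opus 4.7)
The plan is a direct unwinding of the definitions in Definition~\ref{defn:f-form}: expand $\fform^{-1}$ inside the argument, apply $\mathscr{D}$ (respectively $\mathscr{I}$), and then apply $\fform$ on the outside, reading off the result. Because $\fform$ acts by the change of variable $r \leftrightarrow r^2/2$, the $1/r$ weight appearing in $\mathscr{D}$ and the $t$ weight appearing in $\mathscr{I}$ are precisely what is needed to collapse the chain-rule Jacobian (respectively the change-of-variables Jacobian), leaving the clean formulas $-d/dr$ and $\int_r^\infty \cdot\, dx$.

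For the first identity, set $g(r) := \fform^{-1}\varphi(r) = \varphi(r^2/2)$. By the chain rule, $g'(r) = r\,\varphi'(r^2/2)$, hence
\begin{linenomath}\begin{equation*}
    \mathscr{D} g(r) = -\frac{1}{r}g'(r) = -\varphi'(r^2/2).
\end{equation*}\end{linenomath}
Applying $\fform$ replaces $r$ with $\sqrt{2r}$, giving $\fform\mathscr{D}\fform^{-1}\varphi(r) = -\varphi'((\sqrt{2r})^2/2) = -\varphi'(r)$, which is exactly $D\varphi(r)$. The only issue to note is that $g'(r)$ must exist, which is guaranteed as soon as $\fform\mathscr{D}\fform^{-1}\varphi$ is assumed to exist (i.e.\ $\varphi\in C^1(\R_+)$).

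For the second identity, set $h(r) := \fform^{-1}\psi(r) = \psi(r^2/2)$. Then
\begin{linenomath}\begin{equation*}
    \mathscr{I} h(r) = \int_r^\infty t\,\psi(t^2/2)\,dt,
\end{equation*}\end{linenomath}
and the substitution $u = t^2/2$, $du = t\,dt$, turns this into $\int_{r^2/2}^\infty \psi(u)\,du$. Applying $\fform$ sends $r \mapsto \sqrt{2r}$, so the lower limit becomes $(\sqrt{2r})^2/2 = r$, yielding $\fform\mathscr{I}\fform^{-1}\psi(r) = \int_r^\infty \psi(u)\,du = I_1\psi(r)$.

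There is no genuine obstacle here: the whole content of the proposition is the algebraic observation that the $f$-form conjugation exactly cancels the radial weights that distinguish $\mathscr{D}$ from $-d/dr$ and $\mathscr{I}$ from the straight tail integral. The only care needed is to state the regularity/integrability hypotheses so that $g'$ exists in the first case and the substitution is valid in the second; both are implicit in the assumption that $\fform\mathscr{D}\fform^{-1}\varphi$ and $\fform\mathscr{I}\fform^{-1}\psi$ exist.
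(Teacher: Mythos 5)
Your proof is correct and follows the same route as the paper: unwind $\fform^{-1}$, apply $\mathscr{D}$ via the chain rule (respectively $\mathscr{I}$ via the substitution $u=t^2/2$), and then apply $\fform$, exactly as in the paper's computation. Nothing further is needed.
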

    \begin{proof}
    For the first equality,
        \begin{linenomath}\begin{equation*}
        \fform\mathscr{D}\fform^{-1}\varphi(r)=\fform\mathscr{D}[\varphi(t^2/2)](r)=-\fform[\varphi'(t^2/2)](r)=-\varphi'(r)=-\frac{d}{dr}\varphi(r),
        \end{equation*}\end{linenomath}
    and for the second equality,
    \begin{linenomath}\begin{equation*}
        \fform\mathscr{I}\fform^{-1}\psi(r)=\fform\left[\int_t^\infty x\psi(x^2/2)\ dx\right](r)=\fform\left[\int_{t^2/2}^\infty \psi(x)\ dx\right](r)=\int_r^\infty\psi(x)\ dx.
        \end{equation*}\end{linenomath}
    \end{proof}
    Proposition \ref{prop:I1_I-1} shows that the $f$-form operator simplifies the operators $\mathscr{D}$ and $\mathscr{I}$ into usual differential and integral operators. To cover more general cases, we can define the fractional integral operators
    \begin{linenomath}\begin{equation*}
        \begin{gathered}
            I_0\varphi(r)=\varphi(r),\\
            I_\nu\varphi(r):=\int_r^\infty\frac{(x-r)^{\nu-1}}{\Gamma(\nu)}\varphi(x)\ dx,\quad r\in\R_+,
            \end{gathered}
        \end{equation*}\end{linenomath}
    and the fractional differential operators
    \begin{linenomath}\begin{equation*}
        \begin{gathered}
            I_{-n}:=I_{-1}^n=(-1)^n\frac{d^n}{dr^n},\quad n\in\mathbb{N},\\
            I_{-\nu}:=I_{n-\nu}I_{-n},\quad 0<\nu\leq n=\ceil{\nu},
            \end{gathered}
        \end{equation*}\end{linenomath}
    for $\nu>0$.
    
    Next we shall find out the $f$-form of the $d$-variate Fourier transform $\mathscr{F}_d$ and the $d$-variate convolution $\mathscr{C}_d$. Schaback and Wu computed the $d$-variate Fourier transform of a given radial function \cite{SR96OP}. For a given function $\varphi$ on $\R_+$, they proceeded by first taking the inverse $f$-form of $\varphi$ to get 
    \begin{linenomath}
        \begin{equation*}
            g(x)=\fform^{-1}\varphi(\norm{x}{2})=\varphi(\norm{x}{2}^2/2),
            \end{equation*}
        \end{linenomath}
    and then computing the Fourier transform of $g$, which turns out to be
    \begin{linenomath}\begin{equation*}
        \hat{g}(\omega)=\int_0^\infty \varphi(r)r^{(d-2)/2}H_{(d-2)/2}(r\cdot\norm{\omega}{2}^2/2)\ dr=:F_{(d-2)/2}\varphi(\norm{\omega}{2}^2/2).
        \end{equation*}\end{linenomath}
    where $H_{(d-2)/2}$ is defined as in \eqref{defn:Jnu_Hnu}. 
    The function on the right side is exactly the inverse $f$-form of $F_{(d-2)/2}\varphi$ with variable $\norm{\omega}{2}$, so we write
    \begin{linenomath}\begin{equation*}
        \mathscr{F}_d\fform^{-1}\varphi(\norm{\omega}{2})=\hat{g}(\omega)=\fform^{-1}F_{(d-2)/2}\varphi(\norm{\omega}{2}).
        \end{equation*}\end{linenomath}
    Applying the $f$-form operator on both sides, we obtain the $f$-form of $\mathscr{F}_d$ by 
    \begin{linenomath}\begin{equation}
        F_{(d-2)/2}\varphi(r):=(\fform\mathscr{F}_d\fform^{-1}\varphi)(r)=\int_0^\infty\varphi(t)t^{(d-2)/2}H_{(d-2)/2}(tr)\ dt,\quad r\in\R_+,\ t^{(d-1)/2}f(t)\in L^1(\R_+).\label{defn:Fnu}
        \end{equation}\end{linenomath}
    Note that the operator $F_\nu$ defined in Eq.\eqref{defn:Fnu} is an integral operator with parameter $\nu$, so we can generalize $\nu$ to be any real numbers such that $d=2\nu+2>0$, where $d$ is a fractional dimension, whenever the integral exists. Similarly, the $f$-form of the convolution operator $\mathscr{C}_{d}$ should be
    \begin{linenomath}\begin{equation*}
         C_\nu(\varphi,\psi)(r):=(2\pi)^{-(\nu+1)}\fform\mathscr{C}_{2\nu+2}(\fform^{-1}\varphi,\fform^{-1}\psi)(r)=F_\nu[(F_\nu\varphi)\cdot(F_\nu\psi)]
        \end{equation*}\end{linenomath}
    for $\nu>-1$.
    
    Some important properties of the operators above are listed in Proposition \ref{prop:radial_trans}. Proofs of these properties are in \cite{SR96OP}.
    \begin{prop}\label{prop:radial_trans}
        Let $\varphi,\psi\in\mathscr{S}$ be tempered radial test functions.
        \begin{enumerate}[(1)]
            \item If $\mu>-1$, then $F_\mu^2\varphi=\varphi$.

            \item If $\mu,\nu\in\R$, then $I_\mu I_\nu\varphi=I_\nu I_\mu\varphi=I_{\mu+\nu}\varphi$.

            \item If $\mu,\nu>-1$, then $I_{\nu-\mu}\varphi=F_\mu F_\nu\varphi$ and $F_\mu\varphi=F_\nu I_{\mu-\nu}\varphi$.

            \item If $\mu>-1$, then $F_\mu [C_\mu(\varphi,\psi)]=(F_\mu\varphi)\cdot(F_\mu\psi)$.

            \item If $\mu,\nu>-1$, then $I_{\nu-\mu}C_\nu(\varphi,\psi)=C_\mu(I_{\nu-\mu}\varphi,I_{\nu-\mu}\psi)$.
            
            \end{enumerate}
        \end{prop}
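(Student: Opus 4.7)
The five parts form an interlocking family, and my plan is to establish (1), (3) and (4) as the backbone and then deduce (2) and (5) from them.

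For (1), I would exploit that $F_\mu$ is, by construction, the $f$-form of the radial Fourier transform in dimension $d = 2\mu + 2$: $F_\mu = \fform\,\mathscr{F}_d\,\fform^{-1}$. Since the Fourier transform on $\mathscr{S}$ coincides with its inverse on radial functions, $\mathscr{F}_d^2 = \mathrm{Id}$, and hence $F_\mu^2 = \fform\,\mathscr{F}_d^2\,\fform^{-1} = \mathrm{Id}$ in the integer-$d$ case. For non-integer $\mu > -1$ the same identity is the classical Hankel inversion theorem applied to the integral representation (\ref{defn:Fnu}).

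Part (3) is the technical heart. First I would observe that the two stated equalities are equivalent modulo (1) and (2): applying $F_\mu$ to $I_{\nu-\mu}\varphi = F_\mu F_\nu \varphi$ yields $F_\mu I_{\nu-\mu} = F_\nu$, which is the second equality after interchanging the roles of $\mu$ and $\nu$. Thus it suffices to prove $F_\mu = F_\nu I_{\mu-\nu}$. For integer shifts, Theorem \ref{thm:walk} together with Proposition \ref{prop:I1_I-1} gives, upon taking $f$-forms, $F_\mu = F_{\mu-1} I_1$ and $F_\mu = F_{\mu+1} I_{-1}$; iterating and invoking (2) closes all integer differences $\mu - \nu$. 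For non-integer $\mu - \nu$, I would expand $F_\nu I_{\mu-\nu}\varphi$ into a double integral, justify a Fubini interchange via the rapid decay inherent to $\mathscr{S}$, and reduce the inner integral to a Weber--Schafheitlin / Erd\'elyi--Kober identity that converts $t^\nu H_\nu(ts)$ against the fractional-integral kernel $(x-s)^{\mu-\nu-1}/\Gamma(\mu-\nu)$ into $t^\mu H_\mu(tx)$. This single Bessel-type identity is the main obstacle; it requires careful handling of the hypergeometric representation of $H_\nu$ and of integrability at both ends of $\R_+$.

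With (1) and (3) in hand the remainder is short. For (2), when $\mu,\nu > 0$ it is Fubini plus the Beta-function identity $\int_r^x (x-t)^{\mu-1}(t-r)^{\nu-1}\,dt = B(\mu,\nu)(x-r)^{\mu+\nu-1}$; the cases involving $I_{-n}$ are dispatched by reading $I_{-n} = (-1)^n d^n/dr^n$ and integrating by parts against the rapidly decaying test function. Part (4) is immediate: by the definition $C_\mu(\varphi,\psi) = F_\mu[(F_\mu\varphi)(F_\mu\psi)]$, applying $F_\mu$ and using (1) gives $F_\mu C_\mu(\varphi,\psi) = F_\mu^2[(F_\mu\varphi)(F_\mu\psi)] = (F_\mu\varphi)(F_\mu\psi)$. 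Finally, for (5) I would apply $F_\mu$ to both sides, use (3) to replace $F_\mu I_{\nu-\mu}$ by $F_\nu$, and then use (4) to collapse each side to $(F_\nu\varphi)(F_\nu\psi)$; a last application of $F_\mu$ combined with (1) concludes. Thus the whole proposition turns on the single fractional-order Bessel integral identity underlying (3).
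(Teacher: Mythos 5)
First, note that the paper itself does not prove Proposition \ref{prop:radial_trans}: it defers entirely to the cited reference \cite{SR96OP}, so there is no in-paper argument to compare against. Judged on its own terms, your plan is sound and is essentially the route taken in that reference: prove the Hankel-inversion statement (1), reduce (4) to (1) via the definition $C_\mu(\varphi,\psi)=F_\mu[(F_\mu\varphi)(F_\mu\psi)]$, obtain (2) from the Beta-function identity, prove the shift relation in (3), and then get (5) by applying $F_\mu$ and using (1), (3), (4) together with the injectivity of $F_\mu$ coming from $F_\mu^2=\mathrm{Id}$. Your identification of (3) as the crux is also correct, but the ``main obstacle'' is lighter than you advertise: the needed kernel identity is
\begin{equation*}
\int_0^x\frac{(x-t)^{\alpha-1}}{\Gamma(\alpha)}\,t^\nu H_\nu(ts)\,dt=x^{\nu+\alpha}H_{\nu+\alpha}(xs),\qquad \alpha>0,\ \nu>-1,
\end{equation*}
which follows by integrating the defining series \eqref{defn:Jnu_Hnu} term by term and using the same Beta integral you already invoke for (2); no Weber--Schafheitlin machinery is required, only a Fubini/termwise-integration justification, which the rapid decay of $\mathscr{S}$ and the polynomial bound on $H_\nu$ supply. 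Two points deserve more care than your sketch gives them. First, in (3) the order $\mu-\nu$ may be negative, so the displayed identity only covers the fractional-integral direction; you must then pass to the other equality via (1)--(2), or equivalently use the paper's definition $I_{-\nu}=I_{n-\nu}I_{-n}$ and integrate by parts, and this is exactly where the consistency checks in (2) for mixed signs (e.g. $I_\mu I_{-n}=I_{\mu-n}$, commutation of integral and derivative orders) are genuinely needed rather than routine, since (2) for arbitrary real orders underpins your reduction. Second, your chain of operator identities tacitly uses that $\mathscr{S}$ is stable under $F_\mu$ and $I_\nu$ (so that, e.g., (4) may be applied to $I_{\nu-\mu}\varphi$ and (1) to products $(F_\mu\varphi)(F_\mu\psi)$); this invariance is true and is part of what \cite{SR96OP} verifies, but it should be stated, since without it the formal manipulations in (5) are not licensed. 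With those two items filled in, your argument is complete and matches the standard proof.
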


    Now we can establish the dimension walk property of the operators $I_\nu$, where $\nu=\pm 1/2$ is not an integer. This is quite important because $\mathscr{I}$ and $\mathscr{D}$, or $I_1$ and $I_{-1}$ respectively, `walk' through dimensions by step $2$, while the following theorem implies that $I_{1/2}$ and $I_{-1/2}$ `walk' by step $1$. The proof is based on Proposition \ref{prop:radial_trans}(3) and the $f$-form operator.
    \begin{thm}\label{thm:gen_walk}
        Let $\varphi\in\mathscr{S}$ be a tempered radial test function. Let $\mathscr{D}^{1/2}:=\fform^{-1}I_{-1/2}\fform$ and $\mathscr{I}^{1/2}:=\fform^{-1}I_{1/2}\fform$. Then
        \begin{linenomath}
            \begin{equation*}
                \mathscr{F}_d\varphi=\mathscr{F}_{d+1}\mathscr{D}^{1/2}\varphi
            \end{equation*}
        \end{linenomath}
        and
        \begin{linenomath}
            \begin{equation*}
                \mathscr{F}_d\varphi=\mathscr{F}_{d-1}\mathscr{I}^{1/2}\varphi.
            \end{equation*}
        \end{linenomath}
    \end{thm}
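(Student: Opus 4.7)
The plan is to reduce both identities to the algebraic relation $F_\mu = F_\nu I_{\mu-\nu}$ of Proposition~\ref{prop:radial_trans}(3) by conjugating everything back through the $f$-form operator. Concretely, the defining equation \eqref{defn:Fnu} rearranges into $\mathscr{F}_d = \fform^{-1} F_{(d-2)/2} \fform$, so the claim becomes an identity between $f$-form operators that I only have to verify on the ``$F$-side.''

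First I would rewrite the right-hand side of the first equation:
\begin{linenomath}
\begin{equation*}
\mathscr{F}_{d+1}\mathscr{D}^{1/2}
 = \bigl(\fform^{-1} F_{(d-1)/2}\fform\bigr)\bigl(\fform^{-1} I_{-1/2}\fform\bigr)
 = \fform^{-1}\bigl(F_{(d-1)/2} I_{-1/2}\bigr)\fform,
\end{equation*}
\end{linenomath}
and analogously $\mathscr{F}_{d-1}\mathscr{I}^{1/2} = \fform^{-1}\bigl(F_{(d-3)/2} I_{1/2}\bigr)\fform$. Since conjugation by $\fform$ is a bijection, the theorem reduces to proving
\begin{linenomath}
\begin{equation*}
F_{(d-2)/2} = F_{(d-1)/2} I_{-1/2}
\quad\text{and}\quad
F_{(d-2)/2} = F_{(d-3)/2} I_{1/2}.
\end{equation*}
\end{linenomath}
Both are instances of Proposition~\ref{prop:radial_trans}(3) with the roles $(\mu,\nu) = ((d-2)/2,(d-1)/2)$ and $(\mu,\nu) = ((d-2)/2,(d-3)/2)$, giving $\mu-\nu = -1/2$ and $\mu-\nu = 1/2$ respectively; the hypotheses $\mu,\nu > -1$ translate to the mild dimensional assumptions $d > 0$ (first identity) and $d > 1$ (second identity), which hold in the regime where the operators $\mathscr{F}_{d\pm 1}$ make sense.

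The routine but slightly delicate step will be justifying the applicability of Proposition~\ref{prop:radial_trans} on the tempered radial test class $\mathscr{S}$: one must check that $\fform\varphi \in \mathscr{S}$ and that $I_{\pm 1/2}$ preserves enough decay and smoothness for the compositions $F_\nu I_{\pm 1/2}\fform\varphi$ to be defined as ordinary convergent integrals, and for the intermediate Bessel-integral calculations that underlie Proposition~\ref{prop:radial_trans}(3) to apply. Since $\varphi \in \mathscr{S}$, the inverse $f$-form $\fform^{-1}\varphi$ is Schwartz on $\R^d$ in the radial variable, and the half-order fractional integral $I_{1/2}$ (respectively half-derivative $I_{-1/2}$) preserves the required $L^1$ weighting by $t^{(d-1)/2}$ needed in the definition \eqref{defn:Fnu} of $F_\nu$, so these checks are standard. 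Once these are in place, everything collapses to Proposition~\ref{prop:radial_trans}(3) and the theorem follows.

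I expect the main conceptual obstacle to be purely bookkeeping — matching the correct index shifts so that moving from $\mathscr{F}_d$ to $\mathscr{F}_{d\pm 1}$ on the ``radial'' side corresponds exactly to the index jump $\pm 1/2$ on the ``$F_\nu$ index'' side, which is precisely why the $f$-form operator conjugation strategy of Schaback and Wu is the natural framework for constructing half-step dimension walks.
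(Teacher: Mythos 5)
Your proposal is correct and follows essentially the same route as the paper's own proof: conjugation by the $f$-form operator reduces both identities to Proposition \ref{prop:radial_trans}(3) with exactly the parameter choices $(\mu,\nu)=((d-2)/2,(d-1)/2)$ and $(\mu,\nu)=((d-2)/2,(d-3)/2)$. The only difference is that you flag the verification that $\fform\varphi$ lies in the class where Proposition \ref{prop:radial_trans}(3) applies, a point the paper passes over silently, so nothing further is needed.
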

    \begin{proof}
        Let $\psi:=\fform\varphi$ be the $f$-form of $\varphi$ and $\mu=(d-2)/2$, $\nu=(d-1)/2$. Then Proposition \ref{prop:radial_trans}(3) implies $F_{(d-2)/2}\psi=F_{(d-1)/2}I_{-1/2}\psi$, thus
        \begin{linenomath}
            \begin{equation*}
                \mathscr{F}_d\varphi=\fform^{-1}F_{(d-2)/2}\fform\varphi=\fform^{-1}F_{(d-2)/2}\psi=\fform^{-1}F_{(d-1)/2}I_{-1/2}\psi=\mathscr{F}_{d+1}(\fform^{-1}I_{-1/2})(\fform\varphi)=\mathscr{F}_{d+1}\mathscr{D}^{1/2}\varphi.
            \end{equation*}
        \end{linenomath}
        Another equality follows from $F_{(d-2)/2}\psi=F_{(d-3)/2}I_{1/2}\psi$ with $\mu=(d-2)/2$, $\nu=(d-3)/2$ in Proposition \ref{prop:radial_trans}(3) and similar computations.
    \end{proof}
    This theorem is parallel with Theorem \ref{thm:walk} and should be regarded as the generalized dimension walk property. It enables the fractional operators $I_\mu$ to `walk' through all space dimensions. Although Theorem \ref{thm:gen_walk} is stated for functions in $\mathscr{S}$, by standard continuity arguments, it is easy to extend this theorem to CSRBFs such that all the related terms exist.

\section{The native spaces of Wu functions}

    \subsection{Reformulation of Wu functions}

    The original Wu functions are constructed as \cite{WZ95CO}
    \begin{linenomath}\begin{equation}\label{defn:wu_func}
        \varphi_{\ell,k}(r):=\mathscr{D}^k(f_\ell*f_\ell)(2r),\quad r\in\R_+,
        \end{equation}\end{linenomath}
    with $f_\ell(r)=(1-r^2)_+^\ell$ for $\ell,k\in\mathbb{N}$ such that $k\leq\ell$. The function $\varphi_{\ell,k}$ is a cut-off polynomial of degree $4\ell-2k+1$ with $2\ell-2k$ continuous derivatives, compactly supported on $[0,2]$ and positive definite on $\R^{2k+1}$ \cite{WZ95CO}.
    
    In the definition of Wu functions in \eqref{defn:wu_func}, the factor $2$ scales the function to the support $[0,1]$. But this factor will cause extra computations when applying multiple operators to this function. To be specific, for any $c>0$ and function $\varphi$ defined on $\R_+$, the action of operators on rescaled function $\varphi(cr)$ are
    \begin{linenomath}\begin{equation*}
        \begin{gathered}
            \fform\left[\varphi(ct)\right](r)=\varphi(\sqrt{2cr})=\fform\varphi(cr),\\
            \fform^{-1}\left[\varphi(ct)\right](r)=\varphi((cr)^2/2)=\fform^{-1}\varphi(cr),\\
            F_\nu[\varphi(ct)](r)=\int_0^\infty\varphi(cx)x^\nu H_\nu(rx)\ dx=c^{-\nu-1}F_\nu\varphi(r/c),
            \end{gathered}
        \end{equation*}\end{linenomath}
    for $\nu>-1$, and
    \begin{linenomath}\begin{equation*}
        \begin{gathered}
            I_\nu\left[\varphi(ct)\right](r)=\int_r^\infty\frac{(x-r)^{\nu-1}}{\Gamma(\nu)}\varphi(cx)\ dx=c^{-\nu}I_\nu\varphi(cr),\\
            I_{-n}\left[\varphi(ct)\right](r)=(-1)^n\frac{d^n}{dr^n}\varphi(cr)=c^nI_{-n}\varphi(cr),
            \end{gathered}
        \end{equation*}\end{linenomath}
    for $\nu,n>0$. To simplify computations, we remove the factor $2$ in the original definition. As the native spaces of Wu functions is one focus of this paper, we should be careful on whether the rescaling changes the native spaces of Wu functions. Since each Wu function is of finite smoothness, the result established by Larsson and Schaback indicates that the rescaling does not change the native spaces of Wu functions, namely the sets of functions are identical and the norms are equivalent \cite[Thm 1]{LE22SC}. In addition to removing the factor in the original definition of Wu functions, for a complete mathematical theory, we generalize $\ell$ to be any nonnegative real number. So we reformulate the Wu functions as follows.
    \begin{defn}
        For $k\in\mathbb{N},\ \ell\in\R_+$ and $k\leq\ell$, the Wu function $\varphi_{\ell,k}$ is defined by
        \begin{linenomath}
            \begin{equation}\label{defn:rescaled_wu}
                \varphi_{\ell,k}(r):=\mathscr{D}^k(f_\ell*f_\ell)(r),\quad r\in\R_+.
            \end{equation}
        \end{linenomath}
    \end{defn}
    
    Remember the question we proposed: Does a Wu function $\Phi(x)=\varphi_{\ell,k}(\norm{x}{2})$ reproduce a Sobolev space? And can we represent the native space of $\Phi$ in a simple way? To answer these two questions, we need to take a review on the general theory of \emph{reproducing kernels}.
    
\subsection{Preliminary on reproducing kernel theory}
    \begin{defn}(\cite[Sec. 1]{AN50TH}).
        Let $\mathcal{H}$ be a real Hilbert space of functions $f:\Omega\to\R$. A function $\Phi:\Omega\times\Omega\to\R$ is called a reproducing kernel for $\mathcal{H}$ if 
        \begin{enumerate}[(1)]
            \item $\Phi(\cdot,y)\in\mathcal{H}$ for all $y\in\Omega$,
            \item $f(y)=\innprod{f}{\Phi(\cdot,y)}_{\mathcal{H}}$ for all $f\in\mathcal{H}$ and all $y\in\Omega$.
        \end{enumerate}
     A \emph{Reproducing Kernel Hilbert Space (RKHS)} is a Hilbert space $\mathcal{H}$ with a reproducing kernel $\Phi$.
    \end{defn}
    \begin{defn}(\cite[Def. 4.3]{SR99NA}).
        If a symmetric (strictly) positive definite function $\Phi:\Omega\times\Omega\to\R$ is the reproducing kernel of a real Hilbert space $\mathcal{H}$ of real-valued functions on $\Omega$, then $\mathcal{H}$ is called the native space for $\Phi$. We denote the native space of $\Phi$ as $\mathcal{N}_\Phi(\Omega)$.
    \end{defn}
    There are several different ways to interpret the abstract elements in $\mathcal{N}_\Phi(\Omega)$ as functions. The following interpretation follows from \cite{SR99NA}, and we reorganize and summarize the results as the following properties.
    \begin{prop}
        Let $\Phi:\Omega\times\Omega\to\R$ be a symmetric positive definite kernel on $\Omega$. Then the native space of $\Phi$ has the following properties:
        \begin{enumerate}[(1)]
            \item $\mathcal{N}_{\Phi}(\Omega)\subset C(\Omega)$,
            \item $\mathcal{N}_{\Phi}(\Omega)\supset F_\Phi(\Omega)$, where $F_\Phi(\Omega)$ is an inner product space
            \begin{linenomath}
                \begin{equation*}
                    F_{\Phi}(\Omega):=\mbox{span}\{\Phi(\cdot,y):y\in\Omega\}
                \end{equation*}
            \end{linenomath}
            with inner product 
            \begin{linenomath}
                \begin{equation*}
                    \innprod{\sum_{j=1}^m\alpha_j\Phi(\cdot,x_j)}{\sum_{k=1}^n\beta_k\Phi(\cdot,y_k)}_{F_\Phi(\Omega)}:=\sum_{j=1}^m\sum_{k=1}^n\alpha_j\beta_k\Phi(x_j,y_k),
                \end{equation*}
            \end{linenomath}
            \item $\mathcal{N}_{\Phi}(\Omega)$ is the closure of $F_\Phi(\Omega)$ under the inner product of $F_\Phi(\Omega)$.
        \end{enumerate}
    \end{prop}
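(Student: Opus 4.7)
The plan is to work through the three properties in the natural order driven by the reproducing property. Property (2) is essentially a bookkeeping step: for each $y\in\Omega$ the kernel section $\Phi(\cdot,y)$ lies in $\mathcal{N}_\Phi(\Omega)$ by the first axiom of a reproducing kernel, so any finite linear combination lies in $\mathcal{N}_\Phi(\Omega)$, giving the inclusion $F_\Phi(\Omega)\subset\mathcal{N}_\Phi(\Omega)$. The substantive part is to check that the formula proposed as the $F_\Phi(\Omega)$-inner product is well-defined and coincides with the restriction of the $\mathcal{N}_\Phi(\Omega)$-inner product. Applying the reproducing identity to both entries gives
\begin{equation*}
    \innprod{\sum_j\alpha_j\Phi(\cdot,x_j)}{\sum_k\beta_k\Phi(\cdot,y_k)}_{\mathcal{N}_\Phi(\Omega)}=\sum_{j,k}\alpha_j\beta_k\Phi(x_j,y_k),
\end{equation*}
which simultaneously proves compatibility and independence of the right-hand side from the particular expansion chosen (it depends only on the two elements of $F_\Phi(\Omega)$). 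Positive definiteness of this bilinear form is exactly the hypothesis that $\Phi$ is a strictly positive definite kernel.

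For property (3) I would invoke a standard orthogonal-complement argument. Suppose $f\in\mathcal{N}_\Phi(\Omega)$ satisfies $\innprod{f}{g}_{\mathcal{N}_\Phi(\Omega)}=0$ for every $g\in F_\Phi(\Omega)$. Taking $g=\Phi(\cdot,y)$ for arbitrary $y\in\Omega$ and applying the reproducing property yields $f(y)=\innprod{f}{\Phi(\cdot,y)}_{\mathcal{N}_\Phi(\Omega)}=0$. Hence $F_\Phi(\Omega)^{\perp}=\{0\}$, and by the standard Hilbert-space decomposition $\overline{F_\Phi(\Omega)}=\mathcal{N}_\Phi(\Omega)$, i.e.\ the native space is the closure of $F_\Phi(\Omega)$ under the inner product described in (2).

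Property (1) follows from the reproducing property together with the Cauchy-Schwarz inequality. For $f\in\mathcal{N}_\Phi(\Omega)$ and $x,y\in\Omega$,
\begin{equation*}
    |f(x)-f(y)|=\bigl|\innprod{f}{\Phi(\cdot,x)-\Phi(\cdot,y)}_{\mathcal{N}_\Phi(\Omega)}\bigr|\leq\norm{f}{\mathcal{N}_\Phi(\Omega)}\cdot\norm{\Phi(\cdot,x)-\Phi(\cdot,y)}{\mathcal{N}_\Phi(\Omega)},
\end{equation*}
and the squared norm on the right expands via the reproducing property to $\Phi(x,x)+\Phi(y,y)-2\Phi(x,y)$, which tends to $0$ as $y\to x$ provided $\Phi$ is continuous on $\Omega\times\Omega$. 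In the CSRBF setting relevant to this paper, continuity of $\Phi$ is automatic, so this implicit continuity hypothesis is harmless.

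The main obstacle is the well-definedness question embedded in (2): one must verify that the putative inner product on $F_\Phi(\Omega)$ depends only on the element of the span, not on the particular choice of representing coefficients. The reproducing-property computation above handles this cleanly, but conceptually this is the point at which the positive definiteness of $\Phi$ and the reproducing axiom must interact, and it is what makes the whole identification between $F_\Phi(\Omega)$ and a subspace of $\mathcal{N}_\Phi(\Omega)$ consistent.
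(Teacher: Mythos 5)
Your proof is correct: the paper itself gives no proof of this proposition (it only reorganizes results cited from Schaback's native-space work and Wendland's book), and your three arguments --- kernel sections plus the reproducing identity for (2), including the representation-independence of the inner product, the orthogonal-complement argument $F_\Phi(\Omega)^\perp=\{0\}$ for (3), and Cauchy--Schwarz with $\norm{\Phi(\cdot,x)-\Phi(\cdot,y)}{\mathcal{N}_\Phi(\Omega)}^2=\Phi(x,x)-2\Phi(x,y)+\Phi(y,y)$ for (1) --- are exactly the standard ones used in those sources. Your caveat that (1) requires continuity of $\Phi$, which is implicit in the statement and automatic for the continuous radial kernels considered in this paper, is also the right observation.
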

    The reader is referred to \cite{LL01TH} for other useful interpretations of abstract elements in native spaces.
    
    For two \emph{positive semi-definite kernels} $K_1(x,y)$ and $K_2(x,y)$, we say $K_1\ll K_2$,
    if $K_2-K_1$ is a positive semi-definite kernel. We say that these two kernels are equivalent and write 
    $ K_1\sim K_2,$if there exists positive constants $c_1,c_2>0$ such that $c_1K_1\ll K_2\ll c_2K_1$. This equivalence relation between kernels is an analogue of the equivalence relation between norms on a vector space. We say that two \emph{reproducing kernel Hilbert spaces (RKHSs)} $(H_1,\innprod{\cdot}{\cdot}_1)$ and $(H_2,\innprod{\cdot}{\cdot}_2)$ are equivalent and write 
    \begin{linenomath}\begin{equation*}
        (H_1,\innprod{\cdot}{\cdot}_1)\sim (H_2,\innprod{\cdot}{\cdot}_2),
        \end{equation*}\end{linenomath}
    if $H_1=H_2$ and there exists positive constants $c_1,c_2>0$ such that 
    \begin{linenomath}\begin{equation*}
        c_1\norm{\cdot}{1}\leq\norm{\cdot}{2}\leq c_2\norm{\cdot}{1}.
        \end{equation*}\end{linenomath}
    Aronszajn established the connection between kernels and their reproducing spaces \cite{AN50TH}.
    \begin{thm}\label{thm:ker2space}(\cite[Thm 7.I]{AN50TH}).
        If $K_i$ is the reproducing kernel of $(H_i,\innprod{\cdot}{\cdot}_i)$ for $i=1,2$ with $K_1\ll K_2$, then $H_1\subset H_2$ and $\norm{f}{1}\geq\norm{f}{2}$ for every $f\in H_1$.
        \end{thm}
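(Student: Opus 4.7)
The plan is to verify the inclusion and the norm inequality first on the dense subspace $F_{K_1}(\Omega) = \mathrm{span}\{K_1(\cdot,y):y\in\Omega\}$, and then extend to all of $H_1$ by a standard density and completeness argument. The key is to build, for each $f\in F_{K_1}(\Omega)$, a bounded linear functional on the dense subspace $F_{K_2}(\Omega)\subset H_2$ whose Riesz representer in $H_2$ must coincide pointwise with $f$.

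Concretely, I would fix $f=\sum_{j=1}^n\alpha_j K_1(\cdot,y_j)\in F_{K_1}(\Omega)$ and apply Cauchy--Schwarz in $H_1$ to the pairing of $f$ with $\sum_l\gamma_l K_1(\cdot,z_l)$, obtaining
\begin{equation*}
\Bigl|\sum_l\gamma_l f(z_l)\Bigr|^2
=\Bigl|\innprod{f}{\sum_l\gamma_l K_1(\cdot,z_l)}_1\Bigr|^2
\le\|f\|_1^2\sum_{l,k}\gamma_l\gamma_k K_1(z_l,z_k)
\le\|f\|_1^2\sum_{l,k}\gamma_l\gamma_k K_2(z_l,z_k),
\end{equation*}
where the final step uses the hypothesis $K_1\ll K_2$. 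This inequality allows me to define a linear functional $\Lambda_f$ on $F_{K_2}(\Omega)$ by $\Lambda_f\bigl(\sum_l\gamma_l K_2(\cdot,z_l)\bigr):=\sum_l\gamma_l f(z_l)$. The functional is bounded by $\|f\|_1$ in the $H_2$-norm and therefore extends uniquely to a bounded functional on $H_2$; the Riesz representer $\tilde f\in H_2$ satisfies $\|\tilde f\|_2\le\|f\|_1$, and testing against $K_2(\cdot,y)$ together with the reproducing property in $H_2$ forces $\tilde f(y)=\Lambda_f(K_2(\cdot,y))=f(y)$ for every $y\in\Omega$, so $f$ itself lies in $H_2$ with $\|f\|_2\le\|f\|_1$.

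For arbitrary $f\in H_1$, I would approximate $f$ by a Cauchy sequence $\{f_n\}\subset F_{K_1}(\Omega)$ in the $H_1$-norm. The step above then gives $\|f_n-f_m\|_2\le\|f_n-f_m\|_1$, so $\{f_n\}$ is Cauchy in $H_2$ and converges to some $g\in H_2$. Since convergence in either Hilbert space implies pointwise convergence through the reproducing property and Cauchy--Schwarz (using boundedness of the evaluation functionals), we have $g\equiv f$ on $\Omega$, so $f\in H_2$. Passing to the limit in the inequality $\|f_n\|_2\le\|f_n\|_1$ finally yields $\|f\|_2\le\|f\|_1$.

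The main technical hurdle is verifying that $\Lambda_f$ is well-defined on $F_{K_2}(\Omega)$, that is, independent of the particular representation $\sum_l\gamma_l K_2(\cdot,z_l)$. This is precisely where the ordering $K_1\ll K_2$ is essential: the chained Cauchy--Schwarz estimate shows that whenever a combination has vanishing $H_2$-norm, the corresponding sum $\sum_l\gamma_l f(z_l)$ must vanish as well, which is what makes the extension through density legitimate and the whole construction go through.
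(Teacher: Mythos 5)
Your argument is correct: the chained Cauchy--Schwarz estimate, the well-definedness of $\Lambda_f$ (which you rightly flag as the place where $K_1\ll K_2$ enters), the extension by density of $\mathrm{span}\{K_2(\cdot,z):z\in\Omega\}$ in $H_2$, and the identification of the Riesz representer with $f$ via $\tilde f(y)=\Lambda_f(K_2(\cdot,y))=f(y)$ are all sound, as is the final passage to limits. Note, however, that the paper itself gives no proof of this statement; it simply cites Aronszajn's Theorem 7.I, whose classical proof runs differently: one writes $K_2=K_1+(K_2-K_1)$, observes that $K_2-K_1$ is positive semi-definite with its own native space, and invokes Aronszajn's sum-of-kernels theorem, under which the space of $K_2$ consists of all sums $f_1+f$ with squared norm given by the minimal decomposition, so that $f\in H_1$ gives $\|f\|_2^2\le\|f\|_1^2+\|0\|^2$. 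Your route is a direct functional-analytic argument that avoids the sum-of-kernels machinery entirely, which makes it more self-contained; the price is the extra bookkeeping with $\Lambda_f$. One simplification you could make: your key inequality $|\sum_l\gamma_l f(z_l)|^2\le\|f\|_1^2\sum_{l,k}\gamma_l\gamma_k K_2(z_l,z_k)$ only uses the reproducing property $f(z_l)=\langle f,K_1(\cdot,z_l)\rangle_1$, which holds for every $f\in H_1$, not just for finite combinations of $K_1$-sections; so the restriction to $F_{K_1}(\Omega)$ and the subsequent density-and-completeness step in $H_1$ are redundant, and steps one through three already prove the theorem for arbitrary $f\in H_1$.
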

    \begin{thm}\label{thm:space2ker}(\cite[Thm 7.II]{AN50TH}).
        If $K_2$ is the reproducing kernel of $(H_2,\innprod{\cdot}{\cdot}_2)$, $(H_1,\innprod{\cdot}{\cdot}_1)$ is a Hilbert space with $H_1\subset H_2$ and $\norm{f}{1}\geq\norm{f}{2}$ for every $f\in H_1$, then $(H_1,\innprod{\cdot}{\cdot}_1)$ possesses a reproducing kernel $K_1$ such that $K_1\ll K_2$.
    \end{thm}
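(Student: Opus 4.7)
\medskip\noindent\textbf{Proof proposal.} The plan is to build $K_1$ by invoking the Riesz representation theorem inside $H_1$, and then prove $K_1\ll K_2$ by exploiting the two different inner products that represent the same point evaluation. First, I will fix $y\in\Omega$ and verify that the evaluation functional $\delta_y\colon f\mapsto f(y)$ is continuous on $(H_1,\innprod{\cdot}{\cdot}_1)$. Since $K_2$ reproduces on $H_2$, for every $g\in H_2$ we have $|g(y)|=|\innprod{g}{K_2(\cdot,y)}_2|\leq\norm{K_2(\cdot,y)}{2}\norm{g}{2}$. Applying this to $f\in H_1\subset H_2$ and using the hypothesis $\norm{f}{1}\geq\norm{f}{2}$, we obtain $|f(y)|\leq\norm{K_2(\cdot,y)}{2}\norm{f}{1}$, so $\delta_y$ is continuous on $H_1$. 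Completeness of $H_1$ then supplies, via Riesz, a unique element $K_1(\cdot,y)\in H_1$ with $f(y)=\innprod{f}{K_1(\cdot,y)}_1$ for all $f\in H_1$. Symmetry and positive definiteness of $K_1$ follow from the usual reproducing-kernel formalism, so $K_1$ is indeed the reproducing kernel of $H_1$.

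For the inequality $K_1\ll K_2$, I would pick arbitrary points $x_1,\dots,x_n\in\Omega$ and scalars $c_1,\dots,c_n\in\R$, and form the two functions
\begin{linenomath}
\begin{equation*}
    \phi:=\sum_{i=1}^n c_i K_1(\cdot,x_i)\in H_1,\qquad \psi:=\sum_{i=1}^n c_i K_2(\cdot,x_i)\in H_2.
\end{equation*}
\end{linenomath}
The key identity is that point evaluation on $H_1$ can be realized through either kernel: for every $f\in H_1$, $f(x_i)=\innprod{f}{K_1(\cdot,x_i)}_1=\innprod{f}{K_2(\cdot,x_i)}_2$. Summing with the coefficients $c_i$ gives $\innprod{\phi}{f}_1=\innprod{\psi}{f}_2$ for all $f\in H_1$. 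Taking $f=\phi$ yields
\begin{linenomath}
\begin{equation*}
    \sum_{i,j}c_ic_jK_1(x_i,x_j)=\norm{\phi}{1}^2=\innprod{\psi}{\phi}_2\leq\norm{\psi}{2}\norm{\phi}{2}\leq\norm{\psi}{2}\norm{\phi}{1},
\end{equation*}
\end{linenomath}
where the last step uses the norm hypothesis. Dividing through by $\norm{\phi}{1}$ (and handling the trivial case $\phi=0$ separately) gives $\norm{\phi}{1}\leq\norm{\psi}{2}$, hence
\begin{linenomath}
\begin{equation*}
    \sum_{i,j}c_ic_j K_1(x_i,x_j)=\norm{\phi}{1}^2\leq\norm{\psi}{2}^2=\sum_{i,j}c_ic_jK_2(x_i,x_j),
\end{equation*}
\end{linenomath}
which is exactly the statement $K_2-K_1\gg 0$.

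The main obstacle I anticipate is not algebraic but conceptual: one has to resist the temptation to build $K_1$ by orthogonally projecting $K_2(\cdot,y)$ onto $H_1$ inside $H_2$, because $H_1$ need not be closed in $H_2$ (the two norms are only related by an inequality, not equivalence). The Riesz-in-$H_1$ route avoids this pitfall entirely, and the comparison $K_1\ll K_2$ then reduces to the compatibility identity $\innprod{\cdot}{K_1(\cdot,x)}_1=\innprod{\cdot}{K_2(\cdot,x)}_2$ on $H_1$, which is a direct consequence of both kernels reproducing the same point-evaluation functional on the smaller space.
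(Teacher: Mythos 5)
Your proposal is correct. The paper does not prove this statement at all — it is quoted directly from Aronszajn \cite[Thm 7.II]{AN50TH} — so there is no internal proof to compare against; your argument (boundedness of point evaluations on $H_1$ via $\norm{f}{2}\leq\norm{f}{1}$, Riesz representation inside $H_1$ to produce $K_1$, and then the identity $\innprod{\phi}{f}_1=\innprod{\psi}{f}_2$ for $f\in H_1$ combined with Cauchy--Schwarz and the norm hypothesis to get $\sum_{i,j}c_ic_jK_1(x_i,x_j)\leq\sum_{i,j}c_ic_jK_2(x_i,x_j)$) is the standard, complete proof of Aronszajn's theorem, and your remark about avoiding orthogonal projection in $H_2$ (since $H_1$ need not be closed there) is exactly the right caution.
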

    These two theorems implies that two kernels are equivalent if and only if their respective reproducing spaces are equivalent.
    \begin{thm}(\cite[1.13(C)]{AN50TH}).
        Let $(H_i,\innprod{\cdot}{\cdot}_i)$ be a RKHS with kernel $K_i$ for $i=1,2$. Then $K_1\sim K_2$ if and only if $(H_1,\innprod{\cdot}{\cdot}_1)\sim(H_2,\innprod{\cdot}{\cdot}_2)$.
        \end{thm}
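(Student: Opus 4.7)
My plan is to derive the biconditional by symmetric applications of Theorems \ref{thm:ker2space} and \ref{thm:space2ker}, the essential bridge being the following scaling principle: if $(H, \innprod{\cdot}{\cdot})$ is a RKHS with reproducing kernel $K$ and $\lambda > 0$, then $(H, \lambda\innprod{\cdot}{\cdot})$ is a RKHS on the same function space with norm $\sqrt{\lambda}\norm{\cdot}{}$ and reproducing kernel $\lambda^{-1}K$. This is immediate from the reproducing identity $f(y)=\innprod{f}{K(\cdot,y)}$ together with the uniqueness of the reproducing kernel, and it converts multiplicative inequalities between kernels into norm inequalities between their RKHSs, and conversely.

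For the forward direction, suppose $\alpha K_1 \ll K_2 \ll \beta K_1$. Reinterpreting $\alpha K_1$ as the reproducing kernel of $(H_1, \alpha^{-1}\innprod{\cdot}{\cdot}_1)$, whose norm is $\alpha^{-1/2}\norm{\cdot}{1}$, Theorem \ref{thm:ker2space} applied to $\alpha K_1 \ll K_2$ yields $H_1 \subset H_2$ and $\alpha^{-1/2}\norm{f}{1} \geq \norm{f}{2}$. Analogously, viewing $\beta K_1$ as the kernel of $(H_1, \beta^{-1}\innprod{\cdot}{\cdot}_1)$ and applying the same theorem to $K_2 \ll \beta K_1$ gives $H_2 \subset H_1$ and $\norm{f}{2} \geq \beta^{-1/2}\norm{f}{1}$. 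Together these yield $H_1 = H_2$ as sets and $\beta^{-1/2}\norm{\cdot}{1} \leq \norm{\cdot}{2} \leq \alpha^{-1/2}\norm{\cdot}{1}$, which is precisely the equivalence of the two RKHSs.

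For the converse, suppose $H_1 = H_2$ with $c_1\norm{\cdot}{1} \leq \norm{\cdot}{2} \leq c_2\norm{\cdot}{1}$. The right inequality says the rescaled RKHS $(H_1, c_2^2\innprod{\cdot}{\cdot}_1)$, whose norm is $c_2\norm{\cdot}{1}$ and whose reproducing kernel is $c_2^{-2}K_1$, includes into $(H_2, \innprod{\cdot}{\cdot}_2)$ with the former norm dominating; Theorem \ref{thm:space2ker} then delivers a reproducing kernel for the rescaled space bounded above by $K_2$, and by uniqueness this kernel must be $c_2^{-2}K_1$, whence $K_1 \ll c_2^2 K_2$. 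Symmetrically, the left inequality rescales $\innprod{\cdot}{\cdot}_2$ by $c_1^{-2}$ and a second application of Theorem \ref{thm:space2ker} produces $K_2 \ll c_1^{-2}K_1$. Combining, $c_2^{-2}K_1 \ll K_2 \ll c_1^{-2}K_1$, so $K_1 \sim K_2$. The only technical hurdle I anticipate is notational: tracking the square-root and squared factors correctly as one moves between kernels, inner products, and norms, and invoking uniqueness of the reproducing kernel every time Theorem \ref{thm:space2ker} is applied so that the abstract kernel it produces is identified with the expected rescaled $K_i$ rather than some a priori new object.
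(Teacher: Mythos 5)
Your proposal is correct and follows essentially the same route as the paper: rescale the inner products so that $c\,K_i$ (resp.\ $c^{-2}K_i$) becomes the reproducing kernel of the rescaled space, then apply Theorem \ref{thm:ker2space} for the forward direction and Theorem \ref{thm:space2ker} (with uniqueness of the reproducing kernel) for the converse. The only cosmetic difference is that in the converse you rescale $\innprod{\cdot}{\cdot}_2$ for one of the two inequalities where the paper rescales $\innprod{\cdot}{\cdot}_1$ in both, which yields the equivalent bound $c_1^2K_2\ll K_1$ instead of $K_2\ll c_1^{-2}K_1$.
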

    \begin{proof}
    The proof of this statement was sketched in Aronszajn's theory of reproducing kernels with sophisticated mathematics \cite[1.13(C)]{AN50TH}. Here we provide a simplified proof based on Theorems \ref{thm:ker2space} and \ref{thm:space2ker}.

    The sufficiency follows from Theorem \ref{thm:ker2space}. Since $K_1$ and $K_2$ are equivalent, there exists $c_1,c_2>0$ such that $c_1K_1\ll K_2\ll c_2K_1$. These two positive constants induce inner products on $H_1$ by 
    \begin{linenomath}\begin{equation*}
        \innprod{f}{g}_{1,c_1}:=c_1^{-1}\innprod{f}{g}_1,\quad\innprod{f}{g}_{1,c_2}:=c_2^{-1}\innprod{f}{g}_1.
        \end{equation*}\end{linenomath}
    The space $(H_1,\innprod{\cdot}{\cdot}_{1,c_j})$ is a RKHS with kernel $K_{1,c_j}=c_jK_1$ and norm $\norm{\cdot}{1,c_j}=c_j^{-1/2}\norm{\cdot}{1}$ according to the following direct computations.
    \begin{linenomath}\begin{equation*}
        \begin{gathered}
            \innprod{f}{K_{1,c_j}(\cdot,y)}_{1,c_j}=f(y)=c_j^{-1}\innprod{f}{c_jK_1(\cdot,y)}_1=\innprod{f}{c_jK_1(\cdot,y)}_{1,c_j},\\
            \norm{f}{1,c_j}=(\innprod{f}{f}_{1,c_j})^{1/2}=(c_j^{-1}\innprod{f}{f}_1)^{1/2}=c_j^{-1/2}\norm{f}{1}.
            \end{gathered}
        \end{equation*}\end{linenomath}
    So Theorem \ref{thm:ker2space} suggests that $H_1\subset H_2\subset H_1$ and $c_2^{-1/2}\norm{f}{1}\leq\norm{f}{2}\leq c_1^{-1/2}\norm{f}{1}$ for $f\in H_1$, which means exactly $(H_1,\innprod{\cdot}{\cdot}_1)\sim (H_2,\innprod{\cdot}{\cdot}_2)$.

    The necessity follows from Theorem \ref{thm:space2ker}. Since $c_1\norm{f}{1}\leq\norm{f}{2}\leq c_2\norm{f}{1}$ for $f\in H_1=H_2$, Theorem \ref{thm:space2ker} implies $c_2^{-2}K_1\ll K_2\ll c_1^{-2}K_1$ as $c_j^{-2}K_1$ is the kernel of $(H_1,c_j^2\innprod{\cdot}{\cdot}_1)$ with norm $c_j\norm{\cdot}{1}$.
    \end{proof}

    \subsection{The Sobolev spaces and the native spaces of Wu functions}
    
    Fourier transform is a powerful tool to characterize the positive definiteness of a function defined on Euclidean space \cite{BS33MO}\cite[Thm 6.6]{WH04SC}. Here it could be used to decide whether two given kernels induced by radial functions are equivalent according to Bochner's characterization \cite{BS33MO}, and thus decide whether the respective RKHSs are equivalent. In this subsection, we will focus on the kernels induced by Wu functions and the kernels of Sobolev spaces. 
    
    Let $k$ be a nonnegative integer and $d=2k+1$ be the dimension of space. Suppose $\Phi(x)=\varphi_{\ell,k}(\norm{x}{2})$ is the radial function on $\R^d$ induced by the Wu function $\varphi_{\ell,k}$. To see weather the native space of $\Phi$ is equivalent to a Sobolev space, we need to compute the Fourier transform of the kernels of Sobolev spaces and that of $\Phi$. For dimension $d$ and $s>0$, the Sobolev space $H^s(\R^d)$ is defined by
    \begin{linenomath}\begin{equation}
        H^s(\R^d)=\left\{f\in L^2(\R^d): \hat{f}(\cdot)(1+\norm{\cdot}{2}^2)^{s/2}\in L^2(\R^d)\right\}, \label{eq:Hsd}
        \end{equation}\end{linenomath}
    with an inner product
    \begin{linenomath}\begin{equation*}
        \innprod{f}{g}_{H^s(\R^d)}:=(2\pi)^{-d/2}\int_{\R^d}\hat{f}(\omega)\overline{\hat{g}(\omega)}(1+\norm{\omega}{2}^2)^s\ d\omega.
        \end{equation*}\end{linenomath}
    Formal computations in \cite{WH04SC} show that for $s>d/2$ the Sobolev space $H^s(\R^d)$ is the native space of a positive definite radial function
    \begin{linenomath}\begin{equation*}
        \Psi(x)=\frac{2^{1-s}}{\Gamma(s)}\norm{x}{2}^{s-d/2}K_{d/2-s}(\norm{x}{2})=:\psi(\norm{x}{2}),
        \end{equation*}\end{linenomath}
    where $K_\nu$ is the \emph{modified Bessel function of the third kind} \cite[9.6]{AM64HA}. The Fourier transform of $\Psi$ is \emph{an inverse multiquadric} given by
    \begin{linenomath}\begin{equation}
        \hat{\Psi}(\omega)=\mathscr{F}_d\psi(\norm{\omega}{2})=(1+\norm{\omega}{2}^2)^{-s}.\label{eq:imq}
        \end{equation}\end{linenomath}
    Hence, the kernels of Sobolev spaces are characterized by radial functions with Fourier transform that has no zero and decays algebraically. Next we shall consider the $d$-variate Fourier transform of the Wu function $\varphi_{\ell,k}$.
    \begin{lem}\label{lem:fourier_Hnu}(\cite[Lem. 2.1]{SR96OP}).
        For $\nu>\mu>-1$ and $c,r>0$, we have
        \begin{linenomath}
            \begin{equation*}
                F_\mu[H_\nu(ct)](r)=\frac{1}{\Gamma(\nu-\mu)}c^{-\nu}(c-r)_+^{\nu-\mu-1},
            \end{equation*}
        \end{linenomath}
        where $H_\nu(x^2/4)=(x/2)^{-\nu}J_\nu(x)$ and $J_\nu$ is the Bessel function of the first kind and $F_{\mu}$ is defined in \eqref{defn:Fnu}.
    \end{lem}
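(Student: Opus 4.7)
The plan is to exploit the involution property $F_\mu^2=\mathrm{id}$ from Proposition \ref{prop:radial_trans}(1). Rather than attack the integral $\int_0^\infty H_\nu(ct)\,t^\mu H_\mu(tr)\,dt$ directly (it is only conditionally convergent in general, because $H_\nu(ct)$ decays only like $t^{-\nu/2-1/4}$), I would instead verify the identity in the reverse direction: set
\begin{equation*}
g(t):=\frac{c^{-\nu}}{\Gamma(\nu-\mu)}(c-t)_+^{\nu-\mu-1},
\end{equation*}
compute $F_\mu g(r)$ explicitly, show that it equals $H_\nu(cr)$, and then apply $F_\mu$ once more to conclude.

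The computation of $F_\mu g$ reduces to a Beta integral. After rescaling $t=cs$, the $c^{-\nu}$ prefactor cancels cleanly and one obtains
\begin{equation*}
F_\mu g(r)=\frac{1}{\Gamma(\nu-\mu)}\int_0^1 (1-s)^{\nu-\mu-1} s^\mu H_\mu(crs)\,ds.
\end{equation*}
I would expand $H_\mu$ by the power series \eqref{defn:Jnu_Hnu} and interchange sum and integral, which is justified by absolute convergence on $[0,1]$ because $H_\mu$ is entire and the assumptions $\nu>\mu>-1$ make each resulting Beta integral
\begin{equation*}
\int_0^1(1-s)^{\nu-\mu-1}s^{\mu+k}\,ds=\frac{\Gamma(\mu+k+1)\Gamma(\nu-\mu)}{\Gamma(\nu+k+1)}
\end{equation*}
finite. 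The factor $\Gamma(\mu+k+1)$ cancels against the corresponding Gamma factor in the series for $H_\mu$, while $\Gamma(\nu-\mu)$ cancels the prefactor, leaving exactly $\sum_{k\geq 0}(-cr)^k/(k!\,\Gamma(\nu+k+1))=H_\nu(cr)$.

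The main obstacle is justifying the second application of $F_\mu$: although $g$ is compactly supported and nicely behaved, $F_\mu g=H_\nu(c\cdot)$ decays only polynomially and need not lie in the tempered test space $\mathscr{S}$ where Proposition \ref{prop:radial_trans}(1) is stated. I would close this gap either by a standard approximation argument — smoothing $H_\nu(c\cdot)$ by tempered cutoffs and passing to the limit in $F_\mu$, using the decay of $g$ to dominate — or by appealing to the distributional extension of $F_\mu$ implicit in the Schaback--Wu framework cited for Proposition \ref{prop:radial_trans}. Once this is handled, the identity $F_\mu^2 g=g$ combined with $F_\mu g=H_\nu(c\cdot)$ gives $F_\mu[H_\nu(ct)](r)=g(r)$, which is the claim, with the $(c-r)_+$ notation automatically encoding the value $0$ for $r\geq c$ coming from the compact support of $g$.
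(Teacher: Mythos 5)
The paper does not actually prove this lemma --- it is quoted verbatim from Schaback--Wu \cite[Lem.\ 2.1]{SR96OP} --- so there is no in-paper argument to compare against; your proposal has to stand on its own. Its computational core does: the rescaling $t=cs$, the series expansion of $H_\mu$, and the termwise Beta integrals are all correct (the interchange is justified because the series for $H_\mu(crs)$ converges uniformly on $[0,1]$ and $\mu>-1$, $\nu-\mu>0$ make each term integrable), and they give exactly $F_\mu g(r)=H_\nu(cr)$ with $g(t)=\Gamma(\nu-\mu)^{-1}c^{-\nu}(c-t)_+^{\nu-\mu-1}$. This is essentially the standard derivation of the identity, and reducing the lemma to ``$F_\mu$ of a truncated power is $H_\nu$'' plus inversion is the natural route.

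The one place you need to be more careful is the closure of the inversion step, and your first proposed fix does not work as stated. In the regime $\mu<\nu\le\mu+1$ the integral defining $F_\mu[H_\nu(ct)](r)$ is only conditionally convergent (as you yourself note, the integrand decays like $t^{(\mu-\nu)/2-1/2}$ with oscillation), so no dominated-convergence argument with tempered cutoffs can pass to the limit: domination would force absolute convergence, which fails precisely here. What actually closes the gap is the classical Hankel inversion theorem: the $f$-form conjugation turns $F_\mu$ into the Hankel transform of order $\mu$, and $\fform^{-1}g$ is integrable, compactly supported, and of bounded variation in a neighborhood of every point $r\ne c$, so Watson-type pointwise inversion (for order $\mu\ge-1/2$, with known extensions down to $\mu>-1$) yields $F_\mu^2g=g$ at those points, the conditionally convergent outer integral being understood as an improper integral; alternatively one can simply invoke the extension of $F_\mu$ beyond $\mathscr{S}$ established in \cite{SR96OP}, which is what the paper implicitly does by citing that lemma. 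With that substitution for your ``smoothing plus domination'' sketch, the argument is complete (the value at the single point $r=c$ is an edge case irrelevant to the lemma's use).
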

    \begin{thm}\label{thm:fourier_wu}
        Suppose $k\in\mathbb{N},\ \ell\in\R_+$ and $k\leq\ell$. Let $d=2k+1$. Then the $d$-variate Fourier transform of the Wu function $\varphi_{\ell,k}$ is given by
    \begin{linenomath}\begin{equation*}
        \mathscr{F}_d\varphi_{\ell,k}(r)=\frac{\Gamma(\ell+1)^2}{2}\sqrt{2\pi}H_{\ell+1/2}^2(r^2/4).
        \end{equation*}\end{linenomath}
    \end{thm}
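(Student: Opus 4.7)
The plan is to apply the dimension walk $k$ times to reduce to a one-dimensional Fourier transform, and then evaluate that one-dimensional transform via the $f$-form trick. Since $\varphi_{\ell,k}=\mathscr{D}^k(f_\ell*f_\ell)$, iterating Theorem \ref{thm:walk}(2) on $\mathscr{D}^j(f_\ell*f_\ell)$ for $j=0,\dots,k-1$ gives
\begin{linenomath}\begin{equation*}
\mathscr{F}_{2k+1}\varphi_{\ell,k}=\mathscr{F}_{2k+1}\mathscr{D}^k(f_\ell*f_\ell)=\mathscr{F}_1(f_\ell*f_\ell),
\end{equation*}\end{linenomath}
provided the integrability hypotheses hold at each step. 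These are clear for sufficiently large integer $\ell$; for general $\ell\in\R_+$ with $\ell\geq k$ the identity extends by the standard continuity argument alluded to after Theorem \ref{thm:gen_walk}. The one-dimensional convolution theorem then yields $\mathscr{F}_1(f_\ell*f_\ell)=\sqrt{2\pi}\,(\mathscr{F}_1 f_\ell)^2$, so the task reduces to evaluating $\mathscr{F}_1 f_\ell$ in closed form.

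For this I would deploy the $f$-form trick. Writing $\mathscr{F}_1=\fform^{-1}F_{-1/2}\fform$, a direct substitution gives $\fform f_\ell(r)=(1-2r)_+^\ell=2^\ell(1/2-r)_+^\ell$, so the whole problem reduces to applying $F_{-1/2}$ to a truncated power function. The key input is Lemma \ref{lem:fourier_Hnu} combined with the involutivity $F_\mu^2=\mathrm{id}$ from Proposition \ref{prop:radial_trans}(1): inverting the lemma yields the companion formula
\begin{linenomath}\begin{equation*}
F_\mu[(c-\cdot)_+^{\nu-\mu-1}](r)=\Gamma(\nu-\mu)\,c^\nu H_\nu(cr).
\end{equation*}\end{linenomath}
Specializing to $\mu=-1/2$, $\nu=\ell+1/2$, $c=1/2$, and then undoing the $f$-form via $\fform^{-1}g(r)=g(r^2/2)$, I expect to obtain $\mathscr{F}_1 f_\ell(r)=2^{-1/2}\Gamma(\ell+1)\,H_{\ell+1/2}(r^2/4)$. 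Squaring this and multiplying by $\sqrt{2\pi}$ then produces the stated formula.

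The main obstacle is really bookkeeping rather than conceptual: several scaling factors intervene (the $2^\ell$ from rewriting $(1-2r)_+^\ell$, the $2^{-\ell-1/2}$ emerging from the $F_{-1/2}$ identity, and the substitutions $r\mapsto\sqrt{2r}$ and $r\mapsto r^2/2$ attached to $\fform$ and $\fform^{-1}$), and these must cancel precisely to yield the neat prefactor $\Gamma(\ell+1)^2/2$. A secondary point is the verification of the integrability hypotheses of Theorem \ref{thm:walk}(2) at each dimension-walk step for noninteger $\ell$; since $f_\ell*f_\ell$ has compact support and inherits sufficient smoothness from $f_\ell$ in the stated range $\ell\geq k$, this causes no genuine difficulty, and the continuity argument takes care of any remaining edge cases.
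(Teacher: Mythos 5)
Your proof is correct and the bookkeeping does close: with $\mu=-1/2$, $\nu=\ell+1/2$, $c=1/2$ one gets $F_{-1/2}\fform f_\ell(r)=2^{-1/2}\Gamma(\ell+1)H_{\ell+1/2}(r/2)$, hence $\mathscr{F}_1 f_\ell(r)=2^{-1/2}\Gamma(\ell+1)H_{\ell+1/2}(r^2/4)$, and $\sqrt{2\pi}\,(\mathscr{F}_1 f_\ell)^2$ is exactly the stated prefactor $\Gamma(\ell+1)^2/2$ times $\sqrt{2\pi}$. The route differs from the paper's in how $\mathscr{D}^k$ is disposed of: the paper applies $\fform$ directly to $\mathscr{D}^k\mathscr{C}_1(f_\ell,f_\ell)$, commutes $I_{-k}$ into the convolution via Proposition \ref{prop:radial_trans}(5), computes $I_{-k}\fform f_\ell=\frac{\Gamma(\ell+1)}{\Gamma(\ell-k+1)}2^k\fform f_{\ell-k}$, and then applies $F_{k-1/2}$, i.e.\ it works throughout in dimension $2k+1$ with the differentiated truncated power $f_{\ell-k}$; you instead strip off $\mathscr{D}^k$ wholesale by iterating the classical dimension walk (Theorem \ref{thm:walk}(2)) down to $d=1$ and then only need $F_{-1/2}\fform f_\ell$. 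Both arguments run on the same engine --- inverting Lemma \ref{lem:fourier_Hnu} through $F_\mu^2=\mathrm{id}$ (Proposition \ref{prop:radial_trans}(1)) to evaluate $F_\mu$ of a truncated power --- and they are linked by the identity $F_{k-1/2}I_{-k}=F_{-1/2}$ of Proposition \ref{prop:radial_trans}(3), of which the dimension walk is essentially a repackaging. Your shortcut buys economy: no commutation of the derivative with the convolution and no differentiation of truncated powers; the price is verifying the hypotheses of Theorem \ref{thm:walk}(2) at each of the $k$ steps, which does hold for $\ell\geq k$ (the only break points of $f_\ell*f_\ell$ are at $r=0,2$, where it behaves like a power of order $2\ell+1$, so each $\mathscr{D}^j(f_\ell*f_\ell)$, $j\leq k-1$, remains $C^2$, and compact support gives the $L^1$ condition), so the continuity argument you invoke is not even needed. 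What the paper's in-dimension factorization buys is reuse: the intermediate identity $\fform\varphi_{\ell,k}=\sqrt{2\pi}\,C_{k-1/2}(I_{-k}\fform f_\ell,I_{-k}\fform f_\ell)$ is precisely what later yields the self-convolution representation \eqref{defn:wu2} and the native-space characterization of Theorem \ref{thm:native_space_wu}, whereas your reduction to $d=1$ would have to be redone there.
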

    \begin{proof}
        This result is given in \cite[Sec.7]{SR96OP}. Here we introduce a simpler derivation based on the properties of the $f$-form operator. 
        
        Applying the $f$-form operator on $\varphi_{\ell,k}$, Proposition \ref{prop:radial_trans}(5) implies that
    \begin{linenomath}\begin{equation*}
        \begin{aligned}
            \fform\varphi_{\ell,k}=\fform\mathscr{D}^k\mathscr{C}_1(f_\ell,f_\ell)&=\sqrt{2\pi}I_{-k}C_{-1/2}(\fform f_\ell,\fform f_\ell)\\
            &=\sqrt{2\pi}C_{k-1/2}(I_{-k}\fform f_\ell,I_{-k}\fform f_\ell).
            \end{aligned}
        \end{equation*}\end{linenomath}
        The $f$-form of $f_\ell(r)=(1-r^2)_+^\ell$ is a truncated power function
    \begin{linenomath}\begin{equation*}
        \fform f_\ell(r)=(1-2r)_+^\ell.
        \end{equation*}\end{linenomath}
        The $k$-th derivative of $\fform f_\ell$ is
    \begin{linenomath}\begin{equation*}
        I_{-k}\fform f_\ell(r)=\frac{\ell!}{(\ell-k)!}2^k\cdot(1-2r)_+^{\ell-k}=\frac{\Gamma(\ell+1)}{\Gamma(\ell-k+1)}2^k\cdot\fform f_{\ell-k}(r).
        \end{equation*}\end{linenomath}
    Applying $F_\mu$ on the equality in Lemma \ref{lem:fourier_Hnu}, we have
    \begin{linenomath}
        \begin{equation*}
        H_\nu(ct)=\frac{c^{-\nu}}{\Gamma(\nu-\mu)}F_\mu[(c-r)_+^{\nu-\mu-1}](t),
        \end{equation*}
    \end{linenomath}
    so by inserting $c=1/2$ we have
    \begin{linenomath}
        \begin{equation*}
        F_\mu\fform f_{\nu-\mu-1}(r)=F_\mu[(1-2r)_+^{\nu-\mu-1}](r)=\frac{\Gamma(\nu-\mu)}{2^{\mu+1}}H_\nu(r/2),
        \end{equation*}
    \end{linenomath}
    therefore
    \begin{linenomath}
        \begin{equation*}
        F_{\mu}\fform f_{\nu}(r)=\frac{\Gamma(\nu+1)}{2^{\mu+1}}H_{\nu+\mu+1}(r/2).
        \end{equation*}
    \end{linenomath}
    Concluding all the computations above, the action of $F_{k-1/2}$ on $\fform\varphi_{\ell,k}(r)$ is
    \begin{linenomath}\begin{equation*}
        \begin{aligned}
            F_{k-1/2}\fform\varphi_{\ell,k}(r)&=\sqrt{2\pi}F_{k-1/2}C_{k-1/2}(I_{-k}\fform f_\ell,I_{-k}\fform f_\ell)(r)\\
            &=\sqrt{2\pi}[F_{k-1/2}(I_{-k}\fform f_\ell)(r)]^2\\
            &=\frac{\Gamma(\ell+1)^2}{\Gamma(\ell-k+1)^2}4^k\sqrt{2\pi}[F_{k-1/2}\fform f_{\ell-k}(r)]^2\\
            &=\frac{\Gamma(\ell+1)^2}{2}\sqrt{2\pi}H_{\ell+1/2}^2(r/2),
            \end{aligned}
        \end{equation*}\end{linenomath}
    and thus the $2k+1$-variate Fourier transform of $\varphi_{\ell,k}$ is
    \begin{equation}\label{eq:fourier_wu}
        \mathscr{F}_{2k+1}\varphi_{\ell,k}(r)=\fform^{-1}F_{k-1/2}\fform\varphi_{\ell,k}(r)=\frac{\Gamma(\ell+1)^2}{2}\sqrt{2\pi}H_{\ell+1/2}^2(r^2/4).
        \end{equation}
    \end{proof}

    Now we are ready to examine the inclusion relation between the native space of $\Phi(x)=\varphi_{\ell,k}(\norm{x}{2})$ and the Sobolev spaces $H^s(\R^d)$ on odd dimensions $d=2k+1$ with $s>d/2$. An interesting fact related to the functions $H_{\ell+1/2}(r^2/4)=(r/2)^{-(\ell+1/2)}J_{\ell+1/2}(r)$ is that these functions possess zeros in $\R_+$. For sufficiently large $r$, an approximation of $J_{\ell+1/2}$ in \cite[9.2.1]{AM64HA} is
    \begin{linenomath}
        \begin{equation}\label{eq:approx_jnu}
            \sqrt{\frac{\pi r}{2}}J_{\ell+1/2}(r)=\cos\left(r-\frac{(\ell+1/2)\pi}{2}-\frac{\pi}{4}\right)+O(r^{-1/2}),
        \end{equation}
    \end{linenomath}
    so the large zeros of $J_{\ell+1/2}$ are asymptotically
    \begin{linenomath}
        \begin{equation*}
            \left(\frac{\ell}{2}+k+1\right)\pi,\quad k\in\N.
        \end{equation*}
    \end{linenomath}
    Some small positive zeros of $J_\nu$, computed with the Mathematica function \texttt{BesselJZero}, are listed in Table \ref{table:zeros_jnu}. Since $H_{\ell+1/2}$ has zeros while the inverse multiquadrics have no zero, the inequality
    \begin{equation}
        \mathscr{F}_d\psi\leq c\cdot\mathscr{F}_d\varphi_{\ell,k}
        \end{equation}
    fails for every $c>0$ on the zeros of $\mathscr{F}_d\varphi_{\ell,k}$, and thus the native space of $\Phi$ could not be equivalent with any Sobolev space $H^s(\R^d)$ with $s>d/2$. For the inverse inequality, \eqref{eq:approx_jnu} implies that the Bessel function $J_\nu(r)$ is bounded by the inverse square-root for large $r$, so
    \begin{linenomath}\begin{equation*}
        H_{\ell+1/2}^2(r^2/4)=(r/2)^{-(2\ell+1)}J_{\ell+1/2}^2(r)=O(r^{-(2\ell+2)}),\quad r\to\infty,
        \end{equation*}\end{linenomath}
    and the inverse inequality
    \begin{equation}\label{eq:wu<sobolev}
        \mathscr{F}_d\varphi_{\ell,k}\leq c_{\ell,k}\cdot\mathscr{F}_d\psi
        \end{equation}
    holds for $k+1/2<s\leq\ell+1$ and some $c_{\ell,k}>0$. We conclude all these results into the following theorem.
    
    \begin{thm}\label{thm:wu_subspace_sobolev}
        Let $\varphi_{\ell,k}$ be a Wu function with $k\in\mathbb{N},\ \ell\in\R_+$ and $k\leq \ell$. Define a radial function $\Phi(x)=\varphi_{\ell,k}(\norm{x}{2})$ on $\R^{2k+1}$. Then $\mathcal{N}_\Phi(\R^{2k+1})$ is a subspace of $H^s(\R^{2k+1})$ for $k+1/2<s\leq\ell+1$.
        \end{thm}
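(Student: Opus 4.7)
The plan is to reduce the claimed inclusion of native spaces to a pointwise comparison of Fourier transforms, and then invoke Aronszajn's kernel-inclusion principle (Theorem \ref{thm:ker2space}) together with Bochner's theorem. Concretely, I would write $\Phi(x-y) = \varphi_{\ell,k}(\norm{x-y}{2})$ and $\Psi(x-y) = \psi(\norm{x-y}{2})$ as the translation-invariant kernels on $\R^d$ with $d = 2k+1$, and aim to show $\Phi \ll c_{\ell,k}\Psi$ as positive semi-definite kernels for some $c_{\ell,k} > 0$. Because both kernels are radial with non-negative Fourier transform, this kernel inequality is implied (via Bochner's theorem applied to $c_{\ell,k}\Psi - \Phi$) by the scalar inequality
\begin{equation*}
    \mathscr{F}_d\varphi_{\ell,k}(r) \;\leq\; c_{\ell,k}\cdot\mathscr{F}_d\psi(r), \qquad r \geq 0.
\end{equation*}
Once this inequality is in hand, Theorem \ref{thm:ker2space} gives $\mathcal{N}_\Phi(\R^d)\subset\mathcal{N}_{c_{\ell,k}\Psi}(\R^d) = H^s(\R^d)$, which is precisely the desired inclusion.

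To prove the Fourier inequality, I would plug in the two explicit expressions already derived in the excerpt: $\mathscr{F}_d\varphi_{\ell,k}(r) = \tfrac{1}{2}\Gamma(\ell+1)^2\sqrt{2\pi}\,H_{\ell+1/2}^2(r^2/4)$ from Theorem \ref{thm:fourier_wu}, and $\mathscr{F}_d\psi(r) = (1+r^2)^{-s}$ from \eqref{eq:imq}. The inequality is then equivalent to showing that the ratio
\begin{equation*}
    R(r) \;:=\; \frac{H_{\ell+1/2}^2(r^2/4)}{(1+r^2)^{-s}}
\end{equation*}
is uniformly bounded on $\R_+$. I would handle this by splitting into two regimes. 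On a compact neighborhood of the origin, $R$ is a continuous function (since $H_{\ell+1/2}$ is entire and $(1+r^2)^{-s}$ is strictly positive), hence bounded. For large $r$, the standard asymptotic \eqref{eq:approx_jnu} gives $J_{\ell+1/2}(r) = O(r^{-1/2})$, so
\begin{equation*}
    H_{\ell+1/2}^2(r^2/4) = (r/2)^{-(2\ell+1)}J_{\ell+1/2}^2(r) = O(r^{-(2\ell+2)}),
\end{equation*}
while $(1+r^2)^{-s}$ behaves like $r^{-2s}$. Thus $R(r) = O(r^{2s - 2\ell - 2})$, and the condition $s \leq \ell+1$ forces $R$ to stay bounded as $r\to\infty$. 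The lower bound $s > k + 1/2 = d/2$ is imposed so that the Sobolev kernel $\Psi$ is well-defined and its native space is $H^s(\R^d)$.

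The main obstacle I anticipate is the uniform boundedness at the oscillations of $H_{\ell+1/2}$: although $H_{\ell+1/2}$ has infinitely many zeros (so the reverse inequality $\mathscr{F}_d\psi \leq c\,\mathscr{F}_d\varphi_{\ell,k}$ must fail, which is why the native space is only a subspace and not the whole Sobolev space), the direction we need is the easier one since we are dominating something oscillatory by something strictly positive with algebraic decay. The care needed is merely to combine the trivial boundedness on compacta with the asymptotic estimate, and to track that the constant $c_{\ell,k}$ depends on $\ell,k,s$ but not on $r$. Finally, translating the kernel inequality $\Phi \ll c_{\ell,k}\Psi$ back to the native-space inclusion through Theorem \ref{thm:ker2space} is automatic, completing the proof.
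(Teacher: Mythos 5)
Your proposal is correct and follows essentially the same route as the paper: the paper also establishes the pointwise Fourier-domain bound $\mathscr{F}_d\varphi_{\ell,k}\leq c_{\ell,k}\,\mathscr{F}_d\psi$ (using Theorem \ref{thm:fourier_wu}, the inverse multiquadric formula \eqref{eq:imq}, and the Bessel asymptotic \eqref{eq:approx_jnu} giving $H_{\ell+1/2}^2(r^2/4)=O(r^{-(2\ell+2)})$ so that $s\leq\ell+1$ suffices), and then passes to the native-space inclusion via Theorem \ref{thm:ker2space}. Your added details (Bochner applied to $c_{\ell,k}\Psi-\Phi$, and the explicit split into a compact region plus the asymptotic regime) only make explicit what the paper leaves implicit.
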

    \begin{proof}
        The inequality \eqref{eq:wu<sobolev} indicates the inclusion $\mathcal{N}_\Phi(\R^{2k+1})\subset H^s(\R^{2k+1})$ and the norm inequality
        \begin{linenomath}\begin{equation*}
            \norm{\cdot}{\mathcal{N}_\Phi(\R^{2k+1})}\geq c^{-1/2}\norm{\cdot}{H^s(\R^{2k+1})}
            \end{equation*}\end{linenomath}
        for $k+1/2<s\leq\ell+1$ and some $c>0$, according to Theorem \ref{thm:ker2space}, and the norm inequality indicates that the inclusion map is continuous.
        \end{proof}
    Although Theorem \ref{thm:wu_subspace_sobolev} shows that $\mathcal{N}_\Phi(\R^{2k+1})$ is a subspace of some Sobolev spaces, the orthogonal complements of $\mathcal{N}_\Phi(\R^{2k+1})$ in these Sobolev spaces are still unclear. We will discuss this question later.

    \subsection{Representation of the native spaces of Wu functions}

    In this subsection, we will derive a convolutional representation for the native space $\mathcal{N}_\Phi(\R^{2k+1})$, where $\Phi(x)=\varphi_{\ell,k}(\norm{x}{2})$ and $\varphi_{\ell,k}$ is a Wu function. It turns out that the native space of a Wu function is closely connected to its `convolution square-root'. Note that similar result under different assumptions on the kernel has been established by Schaback \cite{SR00AU}.
    
    Wendland characterized the native space of a given function in the frequency domain \cite{WH04SC}.
    \begin{lem}\label{lem:native_space_phase}(\cite[Thm 10.12]{WH04SC}).
        Suppose $\Phi\in C(\R^d)\ints L^1(\R^d)$ is a real-valued positive definite function. Define
        \begin{linenomath}
            $$\mathcal{N}_\Phi(\R^d):=\{f\in C(\R^d)\ints L^2(\R^d):\hat{f}/\sqrt{\hat{\Phi}}\in L^2(\R^d)\},$$
            \end{linenomath}
        and equip this space with the inner product
        \begin{linenomath}\begin{equation*}
            \innprod{f}{g}_{\mathcal{N}_\Phi(\R^d)}:=(2\pi)^{-d/2}\innprod{\hat{f}/\sqrt{\hat{\Phi}}}{\hat{g}/\sqrt{\hat{\Phi}}}_{L^2(\R^d)}=(2\pi)^{-d/2}\int_{\R^d}\frac{\hat{f}(\omega)\overline{\hat{g}(\omega)}}{\hat{\Phi}(\omega)}\ d\omega.
            \end{equation*}\end{linenomath}
        Then $\mathcal{N}_\Phi(\R^d)$ is a real Hilbert space with inner product $\innprod{\cdot}{\cdot}_{\mathcal{N}_\Phi(\R^d)}$ and reproducing kernel $\Phi(\cdot-\cdot)$. Hence $\mathcal{N}_\Phi(\R^d)$ is the native space of $\Phi$ on $\R^d$.
        \end{lem}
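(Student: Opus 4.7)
The plan is to verify, in order: (i) the quotient $\hat{f}/\sqrt{\hat{\Phi}}$ makes sense and the bilinear form is a genuine inner product; (ii) $\Phi(\cdot-y)$ lies in $\mathcal{N}_\Phi(\R^d)$ for every $y\in\R^d$, and the reproducing identity holds; (iii) the space is complete. Because $\Phi$ is continuous, integrable, and positive definite, Bochner's theorem produces $\hat{\Phi}\geq 0$ continuous; under the standing strict positive-definiteness assumption on $\Phi$ we further have $\hat{\Phi}>0$ almost everywhere, so the quotient is defined a.e.\ and the integral is an $L^2$ inner product pulled back through the isometry $T:f\mapsto \hat{f}/\sqrt{\hat{\Phi}}$.

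For the reproducing identity I would set $g(x)=\Phi(x-y)$, so that $\hat{g}(\omega)=e^{-i\innprod{\omega}{y}}\hat{\Phi}(\omega)$ and, since $\Phi$ is real and even, $\overline{\hat{g}(\omega)}=e^{i\innprod{\omega}{y}}\hat{\Phi}(\omega)$. Substituting into the inner product cancels one factor of $\hat{\Phi}$ and leaves
\[
\innprod{f}{\Phi(\cdot-y)}_{\mathcal{N}_\Phi(\R^d)}=(2\pi)^{-d/2}\int_{\R^d}\hat{f}(\omega)e^{i\innprod{\omega}{y}}\,d\omega=f(y),
\]
which is Fourier inversion at $y$. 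The integral is absolutely convergent via Cauchy--Schwarz, $\int_{\R^d}|\hat{f}|\,d\omega=\int_{\R^d}(|\hat{f}|/\sqrt{\hat{\Phi}})\sqrt{\hat{\Phi}}\,d\omega\leq \norm{Tf}{L^2(\R^d)}\norm{\hat{\Phi}}{L^1(\R^d)}^{1/2}$, and the same estimate certifies $\Phi(\cdot-y)\in\mathcal{N}_\Phi(\R^d)$ since $T(\Phi(\cdot-y))=e^{-i\innprod{\cdot}{y}}\sqrt{\hat{\Phi}}\in L^2(\R^d)$.

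The main obstacle is completeness. Given a Cauchy sequence $\{f_n\}$ in $\mathcal{N}_\Phi(\R^d)$, the isometry of $T$ yields $Tf_n\to h$ in $L^2(\R^d)$ for some $h$. I would set $F:=h\sqrt{\hat{\Phi}}$ and define $f$ as the inverse Fourier transform of $F$. Continuity of $f$ follows because $F\in L^1(\R^d)$ by the Cauchy--Schwarz bound above; Plancherel together with $F\in L^2(\R^d)$ gives $f\in L^2(\R^d)$ and $\hat{f}=F$, whence $Tf=h$ and the Cauchy sequence converges in norm. The subtle point is to identify the pointwise limit with $f$, for which I would use the reproducing-kernel inequality
\[
|f_n(x)-f_m(x)|\leq \norm{f_n-f_m}{\mathcal{N}_\Phi(\R^d)}\sqrt{\Phi(0)},
\]
valid once the reproducing property is known; this forces $f_n$ to converge uniformly to a continuous function that must coincide with the Fourier-side candidate $f$. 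The entire argument leans on the integrability hypothesis $\Phi\in L^1(\R^d)$, which is precisely what makes $\hat{\Phi}$ a well-behaved spectral weight and turns the Cauchy--Schwarz passage between $L^2$-norms in frequency and $L^\infty$-control in space into a routine estimate.
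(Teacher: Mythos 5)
The paper gives no proof of this lemma at all---it is quoted directly from Wendland's book---so the only comparison is with that standard argument, and your outline follows the same route: pull the bilinear form back to $L^2(\R^d)$ through $Tf=\hat{f}/\sqrt{\hat{\Phi}}$, get the reproducing identity from pointwise Fourier inversion, and get completeness by transporting the $L^2$ limit back to the function side. Two steps, however, need repair. First, your claim that (strict) positive definiteness forces $\hat{\Phi}>0$ almost everywhere is false: take $\hat{\Phi}$ to be any even, nonnegative, not identically vanishing $C_c^\infty$ bump; its inverse transform $\Phi$ is real, continuous, integrable and positive definite (its transform is nonnegative and positive on an open set---exactly the criterion the paper invokes in the proof of Theorem \ref{thm:native_space_conv}), yet $\hat{\Phi}$ vanishes on a set of infinite measure. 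Bochner only yields $\hat{\Phi}\ge 0$, continuous, not identically zero. You lean on the a.e.\ positivity in three places: to make sense of the quotient defining the space, to cancel $\hat{\Phi}$ in the reproducing computation, and, in the completeness step, to pass from $\hat{f}=h\sqrt{\hat{\Phi}}$ to $Tf=h$ (off the support of $\hat{\Phi}$ this gives only $\hat{f}=0$, not $\hat{f}/\sqrt{\hat{\Phi}}=h$). The repair is the usual convention that membership forces $\hat{f}=0$ a.e.\ on $\{\hat{\Phi}=0\}$, the quotient being read as $0$ there; then every $Tf_n$ vanishes a.e.\ on that set, hence so does its $L^2$ limit $h$, and all three steps go through. (For the paper's application the issue is invisible, since the transform of a Wu function vanishes only on a measure-zero union of spheres.)

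Second, you use $\hat{\Phi}\in L^1(\R^d)$ throughout---in the bound $\int|\hat{f}|\,d\omega\le\norm{Tf}{L^2(\R^d)}\norm{\hat{\Phi}}{L^1(\R^d)}^{1/2}$ and to place $T(\Phi(\cdot-y))=e^{-i\innprod{\cdot}{y}}\sqrt{\hat{\Phi}}$ in $L^2(\R^d)$---but you attribute it to the hypothesis $\Phi\in L^1(\R^d)$. Integrability of $\Phi$ alone does not give integrability of $\hat{\Phi}$; what does is the combination continuous, integrable and positive definite (the Fourier-inversion result for such functions in \cite{WH04SC}, proved by Gaussian mollification), which also supplies the pointwise inversion at $y$ that your reproducing identity needs. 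This fact must be invoked or proved, since both the membership of the translates $\Phi(\cdot-y)$ and the identity $\innprod{f}{\Phi(\cdot-y)}_{\mathcal{N}_\Phi(\R^d)}=f(y)$ hang on it. With these two repairs (plus a line checking the form is real-valued, using that a real positive definite $\Phi$ is even, and that $\hat{\Phi}$ is bounded) your argument is the standard proof; the closing uniform-convergence step is not needed, since the completeness candidate $f=(h\sqrt{\hat{\Phi}})^\vee$ is already continuous because $h\sqrt{\hat{\Phi}}\in L^1(\R^d)$.
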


    According to Lemma \ref{lem:native_space_phase}, a function $f$ is in the native space of $\Phi$ if and only if the function $g=\hat{f}/\sqrt{\hat{\Phi}}$ is square integrable. Thus if $g$ is a square integrable function, then the function $f=(g\sqrt{\hat{\Phi}})^\vee$ should always be in $\mathcal{N}_\Phi(\R^d)$ because 
    \begin{linenomath}
        \begin{equation*}
            \int_{\R^d}\frac{\abs{\hat{f}(\omega)}^2}{\abs{\sqrt{\hat{\Phi}(\omega)}}^2}\ d\omega=\int_{\R^d}\frac{\abs{g(\omega)\sqrt{\hat{\Phi}(\omega)}}^2}{\abs{\hat{\Phi}(\omega)}}\ d\omega\leq\int_{\R^d}\abs{g(\omega)}^2\ d\omega<\infty,
        \end{equation*}
    \end{linenomath}
    that is, $\hat{f}/\sqrt{\hat{\Phi}}$ is square integrable. Furthermore, the convolution theorem \cite[Thm 5.16(2)]{WH04SC} implies
    \begin{linenomath}\begin{equation*}
        (g\sqrt{\hat{\Phi}})^\vee=(2\pi)^{-d/2}[g^\vee*(\sqrt{\hat{\Phi}})^\vee].
        \end{equation*}\end{linenomath}
    This allows a convolutional representation for $\mathcal{N}_\Phi(\R^d)$ under certain assumptions on $\Phi$.
    \begin{thm}\label{thm:native_space_conv}
        Suppose $\Phi=m*m$ with $m\in L^1(\R^d)\ints L^2(\R^d)$ such that $\hat{m}$ vanishes on a zero measure set. Then $\Phi\in C(\R^d)\ints L^1(\R^d)$ is positive definite, and the native space of $\Phi$ on $\R^d$ is
        \begin{linenomath}\begin{equation*}
            \mathcal{N}_\Phi(\R^d)=m*L^2(\R^d):=\{m*g:g\in L^2(\R^d)\}.
            \end{equation*}\end{linenomath}
        Moreover, the convolution operator
        \begin{linenomath}\begin{equation*}
            C_m:L^2(\R^d)\to\mathcal{N}_\Phi(\R^d),\quad C_mg:=m*g
            \end{equation*}\end{linenomath}
        is a bijective isometry.
        \end{thm}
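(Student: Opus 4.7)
The plan is to apply Lemma~\ref{lem:native_space_phase} to $\Phi=m*m$ after checking its hypotheses, and then translate the Fourier-domain description of $\mathcal{N}_\Phi(\R^d)$ into a convolutional one via the convolution theorem. For the hypotheses, continuity of $\Phi$ follows from $m\in L^2(\R^d)$ (convolution of two $L^2$ functions is continuous), integrability $\Phi\in L^1(\R^d)$ from Young's inequality $\norm{m*m}{1}\leq\norm{m}{1}^2$, and positive definiteness from the identity $\hat\Phi=(2\pi)^{d/2}\hat m^{\,2}$: since $m$ is real and symmetric (as is implicitly the case in our Wu-function applications where $m$ is real radial), $\hat m$ is real valued, so $\hat\Phi\geq 0$, and it vanishes only on a null set by hypothesis.

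For the inclusion $m*L^2(\R^d)\subseteq\mathcal{N}_\Phi(\R^d)$ together with the isometry, given $g\in L^2(\R^d)$ I set $f:=m*g$. Then $\hat f=(2\pi)^{d/2}\hat m\hat g$ lies in $L^2$ because $\hat m\in L^\infty$ by Riemann--Lebesgue, and $f$ is continuous as a convolution of two $L^2$ functions, so $f\in C(\R^d)\cap L^2(\R^d)$. A direct computation using $\hat\Phi=(2\pi)^{d/2}|\hat m|^2$ gives
\begin{linenomath}
\begin{equation*}
\frac{|\hat f(\omega)|^2}{\hat\Phi(\omega)}=(2\pi)^{d/2}|\hat g(\omega)|^2\quad\text{a.e.},
\end{equation*}
\end{linenomath}
which is integrable, so $f\in\mathcal{N}_\Phi(\R^d)$ and $\norm{f}{\mathcal{N}_\Phi(\R^d)}^2=(2\pi)^{-d/2}\int(2\pi)^{d/2}|\hat g|^2=\norm{g}{2}^2$. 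This simultaneously establishes that $C_m$ is an isometry onto its image.

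For the reverse inclusion, given $f\in\mathcal{N}_\Phi(\R^d)$ I define $\hat g(\omega):=\hat f(\omega)/\bigl((2\pi)^{d/2}\hat m(\omega)\bigr)$, which is well defined almost everywhere because $\hat m$ vanishes on a null set. The same algebraic identity yields $|\hat g|^2=(2\pi)^{-d/2}|\hat f|^2/\hat\Phi$, so $\hat g\in L^2$ follows from $\hat f/\sqrt{\hat\Phi}\in L^2$; hence $g\in L^2$ by Plancherel, and the convolution theorem gives $m*g=f$. Injectivity of $C_m$ is then immediate: $m*g_1=m*g_2$ forces $\hat m(\hat g_1-\hat g_2)=0$ a.e., and since $\hat m\neq 0$ a.e.\ we get $g_1=g_2$ in $L^2$.

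The main technical point I expect to require care is the handling of the almost-everywhere-defined quotient $\hat f/\hat m$: the hypothesis that $\hat m$ vanishes only on a set of measure zero is precisely what is needed to promote this quotient to a genuine $L^2$ function and to ensure $\hat\Phi>0$ a.e., so that the denominator in the native-space inner product is meaningful. Once this bookkeeping is in place, together with the reality of $\hat m$ that makes $\sqrt{\hat\Phi}=(2\pi)^{d/4}|\hat m|$, the remainder of the argument is a direct application of Plancherel's theorem and the convolution theorem.
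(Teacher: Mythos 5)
Your proof is correct and follows essentially the same route as the paper: verify the hypotheses of Lemma \ref{lem:native_space_phase} via Young's inequality and the identity $\hat\Phi=(2\pi)^{d/2}\hat m^2$, then use Plancherel and the convolution theorem to show that $C_m$ is an isometry of $L^2(\R^d)$ onto $\mathcal{N}_\Phi(\R^d)$, with surjectivity obtained from $\hat g:=(2\pi)^{-d/2}\hat f/\hat m$ exactly as in the paper. Your explicit observation that one needs $\hat m$ real (e.g.\ $m$ real and even, as for the radial functions in the application) so that $\hat m^2\geq 0$ makes precise a point the paper's proof leaves implicit.
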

    \begin{proof}
        Young's inequality \cite[Cor. 1.21]{DJ01FO} implies $\Phi\in L^1(\R^d)$. That $\Phi\in C(\R^d)$ follows from Young's inequality and the fact \cite[Lem 5.21]{WH04SC} that
        \begin{linenomath}\begin{equation*}
            \lim_{t\to 0}\norm{f(\cdot-t)-f(\cdot)}{L^1(\R^d)}=0,\quad f\in L^1(\R^d).
            \end{equation*}\end{linenomath}
        Since $\Phi$ is the self convolution of $m$, the Fourier transform of $\Phi$ is
        \begin{linenomath}\begin{equation*}
            \hat{\Phi}=(2\pi)^{d/2}\hat{m}^2\geq 0,
            \end{equation*}\end{linenomath}
        so $\Phi$ is positive semi-definite by Bochner's characterization. Since $\hat{m}$ vanishes on a zero measure set and $\hat{\Phi}$ is continuous, the support of $\hat{\Phi}$ contains an open set, and thus $\Phi$ is positive definite \cite[Thm 6.8]{WH04SC}.
        
        For every $g\in L^2(\R^d)$, the norm of $C_mg$ in $\mathcal{N}_\Phi(\R^d)$ is
        \begin{linenomath}\begin{equation*}
            \begin{aligned}
                \norm{C_mg}{\mathcal{N}_\Phi(\R^d)}&=\left[(2\pi)^{-d}\int_{\R^d}\frac{\abs{(C_mg)^\wedge(\omega)}^2}{\abs{\hat{m}(\omega)}^2}\ d\omega\right]^{1/2}=\left[\int_{\R^d}\abs{\hat{g}(\omega)}^2\ d\omega\right]^{1/2}\\
                &=\norm{\hat{g}}{L^2(\R^d)}=\norm{g}{L^2(\R^d)},
                \end{aligned}
            \end{equation*}\end{linenomath}
        where the last equality follows from Plancherel's theorem \cite[Cor. 5.25]{WH04SC}, so $C_m$ is an isometry that maps $L^2(\R^d)$ into $\mathcal{N}_\Phi(\R^d)$. For every $f\in\mathcal{N}_\Phi(\R^d)$, the function $g:=(2\pi)^{-d/2}(\hat{f}/\hat{m})^\vee$ is square integrable, and
        \begin{linenomath}\begin{equation*}
            C_mg=m*g=(2\pi)^{d/2}(\hat{m}\hat{g})^\vee=(\hat{f})^\vee=f,
            \end{equation*}\end{linenomath}
        so $C_m$ is surjective and thus bijective. As a result, the representation
        \begin{linenomath}\begin{equation*}
            \mathcal{N}_\Phi(\R^d)=C_m(L^2(\R^d))=\{m*g:g\in L^2(\R^d)\}
            \end{equation*}\end{linenomath}
        follows.
        \end{proof}
    
    For a Wu function $\varphi_{\ell,k}$, the radial function $\Phi(x)=\varphi_{\ell,k}(\norm{x}{2})$ on $\R^d$ is the self convolution of some radial function, as we will see later. Applying Theorem \ref{thm:native_space_conv} on Wu functions, we get the following result.

    \begin{thm}\label{thm:native_space_wu}
        Let $\varphi_{\ell,k}$ be a Wu function with $k\in\mathbb{N},\ \ell\in\R_+$ and $k\leq\ell$. Then the native space of $\Phi(x)=\varphi_{\ell,k}(\norm{x}{2})$ on $\R^{2k+1}$ is
        \begin{equation}
            \mathcal{N}_\Phi(\R^{2k+1})=P*L^2(\R^{2k+1}):=\{P*g:g\in L^2(\R^{2k+1})\},
            \end{equation}
        where
        \begin{linenomath}\begin{equation*}
            \begin{gathered}
                P(x)=\frac{\Gamma(\ell+1)}{\Gamma(\ell-k+1)}(2/\pi)^{k/2}f_{\ell-k}(\norm{x}{2}),\\
                f_{\ell-k}(r)=(1-r^2)_+^{\ell-k}.
                \end{gathered}
        \end{equation*}\end{linenomath}
        Moreover, the convolution operator
        \begin{linenomath}\begin{equation*}
            C_P:L^2(\R^{2k+1})\to\mathcal{N}_\Phi(\R^{2k+1}),\quad C_Pg=P*g
            \end{equation*}\end{linenomath}
        is a bijective isometry.
        \end{thm}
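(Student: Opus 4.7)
The strategy is to verify the hypotheses of Theorem \ref{thm:native_space_conv} with $m = P$, which reduces the theorem to three claims: $P \in L^1(\R^{2k+1}) \cap L^2(\R^{2k+1})$, the Fourier transform $\hat{P}$ vanishes on a set of Lebesgue measure zero, and $\Phi = P * P$ pointwise on $\R^{2k+1}$. The first claim is immediate since $P$ is bounded with compact support, so $P$ lies in every $L^p$.

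For the Fourier transform of $P$, I will apply the same $f$-form identities used in the proof of Theorem \ref{thm:fourier_wu}. Specifically, taking $\mu = k - 1/2$ and $\nu = \ell - k$ in the identity $F_\mu \fform f_\nu(r) = \Gamma(\nu+1)\, 2^{-\mu-1}\, H_{\nu+\mu+1}(r/2)$ derived from Lemma \ref{lem:fourier_Hnu}, and then applying $\fform^{-1}$ to return to the $(2k+1)$-variate Fourier transform, produces
\begin{linenomath}
\begin{equation*}
\hat{P}(\omega) = \Gamma(\ell+1)\,(2\pi)^{-k/2}\, 2^{-1/2}\, H_{\ell+1/2}(\norm{\omega}{2}^2/4).
\end{equation*}
\end{linenomath}
Since $H_{\ell+1/2}(r^2/4) = (r/2)^{-(\ell+1/2)} J_{\ell+1/2}(r)$ and $J_{\ell+1/2}$ has only isolated zeros in $\R_+$ (as discussed before Theorem \ref{thm:wu_subspace_sobolev}), the zero set of $\hat{P}$ in $\R^{2k+1}$ is a countable union of concentric spheres, which has Lebesgue measure zero.

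The third claim is where the constant $(2/\pi)^{k/2}$ in the definition of $P$ reveals its purpose. Squaring the expression for $\hat{P}$ and multiplying by the factor $(2\pi)^{(2k+1)/2}$ from the convolution theorem gives
\begin{linenomath}
\begin{equation*}
(2\pi)^{(2k+1)/2}\, \hat{P}^2(\omega) = \frac{\Gamma(\ell+1)^2}{2}\sqrt{2\pi}\, H_{\ell+1/2}^2(\norm{\omega}{2}^2/4),
\end{equation*}
\end{linenomath}
which is precisely $\hat{\Phi}(\omega)$ as computed in Theorem \ref{thm:fourier_wu}. Since $P * P$ and $\Phi$ are both continuous $L^1$ functions with equal Fourier transforms, Fourier inversion yields $P * P = \Phi$ pointwise on $\R^{2k+1}$.

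With all three hypotheses verified, Theorem \ref{thm:native_space_conv} applies with $m = P$, delivering both the convolutional representation $\mathcal{N}_\Phi(\R^{2k+1}) = P * L^2(\R^{2k+1})$ and the bijective isometry property of $C_P$ in one stroke. The main obstacle is the bookkeeping of normalization constants: the Fourier convention, the convolution theorem factor $(2\pi)^{d/2}$, and the hidden factors within the $f$-form operator all contribute, and the scalar $(2/\pi)^{k/2}$ in $P$ is calibrated precisely so that $P * P$ matches $\Phi$ on the nose rather than up to a multiplicative constant, which in turn is what makes $C_P$ a genuine isometry.
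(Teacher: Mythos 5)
Your proposal is correct and takes essentially the same route as the paper: both identify $\Phi$ as the self-convolution $P*P$ by matching Fourier transforms (using the $f$-form identities behind Theorem \ref{thm:fourier_wu}) and then invoke Theorem \ref{thm:native_space_conv} to obtain the representation and the isometry. Your write-up is in fact slightly more careful in checking the hypotheses of that theorem, notably that the zero set of $\hat P$ is a countable union of spheres and hence of measure zero, a point the paper leaves implicit.
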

        
    \begin{proof}
        We break $\Phi$ into the self convolution of some function. With Proposition \ref{prop:radial_trans}(4,5), compute
        \begin{equation}
            \begin{aligned}
                \mathscr{F}_{2k+1}\varphi_{\ell,k}&=\mathscr{F}_{2k+1}\mathscr{D}^k(f_\ell*f_\ell)(r)\\
                &=\sqrt{2\pi}\fform^{-1}F_{k-1/2}I_{-k}C_{-1/2}(\fform f_{\ell},\fform f_{\ell})\\
                &=\sqrt{2\pi}\fform^{-1}F_{k-1/2}C_{k-1/2}(I_{-k}\fform f_{\ell},I_{-k}\fform f_{\ell})\\
                &=\sqrt{2\pi}(\fform^{-1}F_{k-1/2}I_{-k}\fform f_{\ell})^2.
                \end{aligned}
            \end{equation}
        
        Notice that
        \begin{linenomath}\begin{equation*}
            I_{-k}\fform f_\ell(r)=\left(-\frac{d}{dr}\right)^{k}(1-2r)_+^\ell=\frac{\Gamma(\ell+1)}{\Gamma(\ell-k+1)}2^k\fform f_{\ell-k}(r),
            \end{equation*}\end{linenomath}
        so
        \begin{linenomath}\begin{equation*}
            \begin{aligned}
                \mathscr{F}_{2k+1}\varphi_{\ell,k}&=\frac{\Gamma(\ell+1)^2}{\Gamma(\ell-k+1)^2}2^{2k}\sqrt{2\pi}(\fform^{-1}F_{k-1/2}\fform f_{\ell-k})^2\\
                &=\frac{\Gamma(\ell+1)^2}{\Gamma(\ell-k+1)^2}2^{2k}\sqrt{2\pi}(\mathscr{F}_{2k+1}f_{\ell-k})^2,\\
                &=\frac{\Gamma(\ell+1)^2}{\Gamma(\ell-k+1)^2}(2/\pi)^k\mathscr{F}_{2k+1}\mathscr{C}_{2k+1}(f_{\ell-k},f_{\ell-k})\\
                &=\mathscr{F}_{2k+1}\mathscr{C}_{2k+1}(p,p),
                \end{aligned}
            \end{equation*}\end{linenomath}
        where 
        \begin{linenomath}\begin{equation*}
            p(r)=\frac{\Gamma(\ell+1)}{\Gamma(\ell-k+1)}(2/\pi)^{k/2}f_{\ell-k}(r).
            \end{equation*}\end{linenomath}
        This means that
        \begin{equation}
            \hat{\Phi}(\omega)=(P*P)^\wedge(\omega),
            \end{equation}
        where $P(x)=p(\norm{x}{2})$, so $\Phi$ is the self convolution of $P$. The remaining facts follow directly from Lemma \ref{lem:native_space_phase}.
        \end{proof}
    
    Theorem \ref{thm:native_space_wu} provides an explicit characterization for the native spaces of Wu functions. For $\Phi(x)=\varphi_{\ell,k}(\norm{x}{2})$, a function in $\mathcal{N}_\Phi(\R^{2k+1})$ is, in the space domain, the convolution of a $L^2$ function and the `convolution square-root' of $\Phi$ or, in the phase domain, a $L^2$ signal modulated by the weight function $H_{\ell+1/2}(\norm{\omega}{2}^2/4)$. Since $H_{\ell+1/2}$ has zeros in $\R_+$, the functions in $\mathcal{N}_\Phi(\R^{2k+1})$ may miss certain frequencies at the zeros of $H_{\ell+1/2}(\norm{\omega}{2}^2/4)$. But the following theorem shows that the missing of such frequencies is in fact not a serious problem.
    \begin{thm}\label{thm:dense}
        Let $\varphi_{\ell,k}$ be a Wu function with $k\in\mathbb{N},\ \ell\in\R_+$ and $k\leq \ell$. Then the native space of $\Phi(x)=\varphi_{\ell,k}(\norm{x}{2})$ on $\R^{2k+1}$ is dense in $H^s(\R^{2k+1})$ for $k+1/2<s\leq\ell+1$.
        \end{thm}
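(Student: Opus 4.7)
The plan is to exploit the explicit frequency-domain characterization of $\mathcal{N}_\Phi(\R^{2k+1})$ given by Lemma \ref{lem:native_space_phase}, together with the observation from Theorem \ref{thm:fourier_wu} that $\hat{\Phi}$ is, up to a positive scalar, the function $H_{\ell+1/2}^2(\norm{\omega}{2}^2/4)$. Its zero set $Z:=\{\omega\in\R^{2k+1}:\hat{\Phi}(\omega)=0\}$ is therefore a countable union of concentric spheres (one for each positive zero of $J_{\ell+1/2}$), and so has Lebesgue measure zero. Given $f \in H^s(\R^{2k+1})$, I would truncate $\hat{f}$ in Fourier space so as to stay away both from infinity and from $Z$: set
\begin{equation*}
E_n := \{\omega \in \R^{2k+1} : \norm{\omega}{2} \leq n \text{ and } \hat{\Phi}(\omega) \geq 1/n\}, \qquad f_n := (\hat{f}\cdot\chi_{E_n})^\vee.
\end{equation*}
Because $\hat{\Phi}$ is continuous (Riemann--Lebesgue applied to the compactly supported $L^1$ function $\Phi$), the sets $E_n$ increase to $\R^{2k+1}\setminus Z$, and $\chi_{E_n}\to 1$ almost everywhere.

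The first main step would be to verify $f_n \in \mathcal{N}_\Phi(\R^{2k+1})$ via Lemma \ref{lem:native_space_phase}. Since $\hat{\Phi}\geq 1/n$ on $E_n$ and $\hat{f}\in L^2$ (as $f\in H^s\subset L^2$), one has
\begin{equation*}
\int_{\R^{2k+1}}\frac{\abs{\hat{f}_n(\omega)}^2}{\hat{\Phi}(\omega)}\,d\omega \leq n\,\norm{\hat{f}}{L^2(\R^{2k+1})}^2<\infty,
\end{equation*}
so $\hat{f}_n/\sqrt{\hat{\Phi}}\in L^2$. Furthermore $E_n$ has finite Lebesgue measure, so Cauchy--Schwarz yields $\hat{f}_n\in L^1\cap L^2$, whence $f_n\in C(\R^{2k+1})\cap L^2(\R^{2k+1})$, completing the membership check.

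The second step is convergence in the $H^s$ norm:
\begin{equation*}
\norm{f-f_n}{H^s(\R^{2k+1})}^2=\int_{\R^{2k+1}}\abs{\hat{f}(\omega)}^2(1-\chi_{E_n}(\omega))^2(1+\norm{\omega}{2}^2)^s\,d\omega\longrightarrow 0
\end{equation*}
by dominated convergence: the integrand vanishes pointwise a.e., and an integrable dominant is $\abs{\hat{f}}^2(1+\norm{\omega}{2}^2)^s\in L^1(\R^{2k+1})$, which is just the statement $f\in H^s$. I do not anticipate a serious technical obstacle. The only conceptual point is that, although the zeros of $\hat{\Phi}$ force $\mathcal{N}_\Phi(\R^{2k+1})$ to be a \emph{proper} subspace of $H^s(\R^{2k+1})$ (cf.\ Theorem \ref{thm:wu_subspace_sobolev}), their residence on a Lebesgue-null set is exactly what allows density in the $L^2$-based Sobolev norm via this cutoff-and-DCT argument; the hypothesis $s\leq\ell+1$ is never used in this half of the picture and enters only through the inclusion in Theorem \ref{thm:wu_subspace_sobolev}.
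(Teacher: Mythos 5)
Your argument is correct, but it takes a genuinely different route from the paper. The paper proves density by a duality argument: using the convolutional representation $\mathcal{N}_\Phi(\R^{2k+1})=P*L^2(\R^{2k+1})$ from Theorem \ref{thm:native_space_wu}, it shows that any $g\in H^s(\R^{2k+1})$ orthogonal to every $P*h$ must satisfy $\hat{g}\hat{P}(1+\norm{\omega}{2}^2)^s=0$ a.e., hence $g=0$ since $\hat P$ vanishes only on a null set, so $\mathcal{N}_\Phi(\R^{2k+1})^\perp=\{0\}$ and density follows from $(\mathcal{N}_\Phi^{\perp})^{\perp}=\overline{\mathcal{N}_\Phi}$ in the Hilbert space $H^s$. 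You instead build an explicit approximating sequence by spectral truncation away from the zero set of $\hat\Phi$, using only Lemma \ref{lem:native_space_phase} and the Fourier transform computed in Theorem \ref{thm:fourier_wu}; the membership check ($\hat f_n$ supported where $\hat\Phi\geq 1/n$, $\hat f_n\in L^1\cap L^2$ by finiteness of $|E_n|$) and the dominated-convergence step are both sound, and your observation that $s\leq\ell+1$ is only needed for the inclusion $\mathcal{N}_\Phi(\R^{2k+1})\subset H^s(\R^{2k+1})$ matches the logic of the paper. Your approach is more constructive and bypasses Theorem \ref{thm:native_space_wu} entirely, making transparent that density rests solely on the zero set of $H_{\ell+1/2}^2(\norm{\omega}{2}^2/4)$ being Lebesgue-null; the paper's approach buys a tighter link to the convolutional characterization it has just established and avoids checking membership of the truncations. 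One small point worth stating explicitly in your write-up: the native space consists of real-valued functions, and your $f_n$ is indeed real-valued because $E_n$ is radially symmetric (so $\hat f_n(-\omega)=\overline{\hat f_n(\omega)}$ whenever $f$ is real); this is immediate but should be said.
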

    \begin{proof}
        Suppose $g\in H^s(\R^{2k+1})$ is orthogonal with all the functions in $\mathcal{N}_\Phi(\R^{2k+1})$. Then for $h\in L^2(\R^{2k+1})$,
        \begin{equation}\label{eq:orthogonal}
            \begin{aligned}
                0=\innprod{g}{P*h}_{H^s(\R^{2k+1})}&=(2\pi)^{-(k+1/2)}\int_{\R^{2k+1}}\hat{g}(\omega)\overline{(P*h)^\wedge(\omega)}(1+\norm{\omega}{2}^2)^s\ d\omega\\
                &=\int_{\R^{2k+1}}\hat{g}(\omega)\hat{P}(\omega)(1+\norm{\omega}{2}^2)^s\overline{\hat{h}(\omega)}\ d\omega\\
                &=\innprod{\hat{g}(\omega)\hat{P}(\omega)(1+\norm{\omega}{2}^2)^s}{\hat{h}(\omega)}_{L^2(\R^{2k+1})}.
                \end{aligned}
            \end{equation}
        Since $\hat{g}\cdot(1+\norm{\omega}{2}^2)^s$ is square integrable and $\hat{P}$ is bounded, the function $\hat{g}\hat{P}\cdot(1+\norm{\omega}{2}^2)^s$ is square integrable. Equation \eqref{eq:orthogonal} implies that the function $\hat{g}\hat{P}\cdot(1+\norm{\omega}{2}^2)^s$ should be orthogonal with all functions in $L^2(\R^{2k+1})$, so this function should be the zero function. But $\hat{P}\cdot(1+\norm{\omega}{2}^2)^s$ vanishes on a zero measure set, so $g$ must be zero. In conclusion, the orthogonal complement of $\mathcal{N}_\Phi(\R^{2k+1})$ in $H^s(\R^{2k+1})$ contains only the zero function, and $\mathcal{N}_\Phi(\R^{2k+1})$ is dense in $H^s(\R^{2k+1})$ because
        \begin{linenomath}\begin{equation*}
            \overline{\mathcal{N}_\Phi(\R^{2k+1})}=(\mathcal{N}_\Phi(\R^{2k+1})^\perp)^{\perp}=0^\perp=H^s(\R^{2k+1}).
            \end{equation*}\end{linenomath}
        \end{proof}

\section{The generalized Wu functions and their closed form}
    
\subsection{The generalized Wu functions}
    Up to now we have obtained three equalities for Wu functions in odd-dimensional spaces, namely
    \begin{align}
        \varphi_{\ell,k}(r)&=\mathscr{D}^k(f_\ell*f_\ell)(r) \label{defn:wu1}\\
        &=c_{\ell,k}\mathscr{C}_{2k+1}(f_{\ell-k},f_{\ell-k})(r)\label{defn:wu2}\\
        &=d_{\ell}\mathscr{F}_{2k+1}[H_{\ell+1/2}^2(t^2/4)](r), \label{defn:wu3}
        \end{align}
    where the constants are 
    \begin{linenomath}\begin{equation*}
        c_{\ell,k}=\frac{\Gamma(\ell+1)^2}{\Gamma(\ell-k+1)^2}(2/\pi)^k,\quad d_\ell=\frac{\Gamma(\ell+1)^2}{2}\sqrt{2\pi}.
        \end{equation*}\end{linenomath}
    These equalities reveal different features of Wu functions. Eq. \eqref{defn:wu1}, the original definition for Wu function, concerns the dimension walk property of the differential operator $\mathscr{D}$. Eq. \eqref{defn:wu2}, proved in Theorem \ref{thm:native_space_wu}, represents each Wu function as the self convolution of a certain function, which appears as the convolution kernel in the representation of the native space of a given Wu function. Eq. \eqref{defn:wu3}, computed in Theorem \ref{thm:fourier_wu}, shows the $2k+1$-dimensional Fourier transform of Wu functions. So to extend the family of Wu functions, one needs to keep the consistency of these three equalities. In this subsection, we will verify that the equalities \eqref{defn:wu1}, \eqref{defn:wu2} and \eqref{defn:wu3} hold for nonnegative half integers $k$, and the family of Wu functions extends to all integer-dimensional spaces as a result.
    \begin{thm}\label{thm:gen_wu_func}
        Let $\fform$ be the $f$-form operator, $I_{-k}$ be the fractional differential operator. Define $\mathscr{D}^k$ by
        \begin{linenomath}\begin{equation*}
            \mathscr{D}^k:=\fform^{-1}I_{-k}\fform,\quad k\in\mathbb{N}/2.
            \end{equation*}\end{linenomath}
        Then the equalities 
        \begin{linenomath}\begin{equation*}
            \mathscr{D}^k(f_\ell*f_\ell)=c_{\ell,k}\mathscr{C}_{2k+1}(f_{\ell-k},f_{\ell-k})=d_\ell\mathscr{F}_{2k+1}[H_{\ell+1/2}^2(t^2/4)]
            \end{equation*}\end{linenomath}
        hold for $k\in\mathbb{N}/2,\ \ell\in\R_+$ such that $\ell\geq k$, where
        \begin{linenomath}\begin{equation*}
            \begin{gathered}
                f_\ell(r)=(1-r^2)_+^\ell,\\
                c_{\ell,k}=\frac{\Gamma(\ell+1)^2}{\Gamma(\ell-k+1)^2}(2/\pi)^k,\quad d_\ell=\frac{\Gamma(\ell+1)^2}{2}\sqrt{2\pi}.
                \end{gathered}
            \end{equation*}\end{linenomath}
        \end{thm}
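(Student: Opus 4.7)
The plan is to carry out the same derivations as in the proofs of Theorem \ref{thm:fourier_wu} and Theorem \ref{thm:native_space_wu}, verifying that every step remains valid when $k$ is a half-integer so that $I_{-k}$ is a genuine fractional operator rather than a classical differential one. Since $\mathscr{D}^k$ is defined through $\fform^{-1} I_{-k}\fform$, the natural strategy is to first apply $\fform$ to $\mathscr{D}^k(f_\ell * f_\ell)$ and work entirely in $f$-form variables, where all the machinery of Proposition \ref{prop:radial_trans} is available for real parameters.

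For the first equality, write $f_\ell * f_\ell = \mathscr{C}_1(f_\ell,f_\ell)$ and use the $f$-form expression of $\mathscr{C}_1$ to obtain $\fform\mathscr{D}^k(f_\ell*f_\ell) = \sqrt{2\pi}\, I_{-k}\, C_{-1/2}(\fform f_\ell, \fform f_\ell)$. Proposition \ref{prop:radial_trans}(5), applied with $\nu = -1/2$ and $\mu = k-1/2$, commutes the fractional derivative inside the $C$-convolution to produce
\begin{linenomath}\begin{equation*}
\fform\mathscr{D}^k(f_\ell*f_\ell) = \sqrt{2\pi}\, C_{k-1/2}(I_{-k}\fform f_\ell,\ I_{-k}\fform f_\ell).
\end{equation*}\end{linenomath}
The key one-variable computation is then the truncated-power identity $I_{-k}\fform f_\ell = 2^k\,\Gamma(\ell+1)/\Gamma(\ell-k+1)\,\fform f_{\ell-k}$. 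Substituting it and unfolding the definition of $C_{k-1/2}$ in terms of $\mathscr{C}_{2k+1}$ gives, after applying $\fform^{-1}$, the first equality $\mathscr{D}^k(f_\ell*f_\ell) = c_{\ell,k}\mathscr{C}_{2k+1}(f_{\ell-k},f_{\ell-k})$. For the second equality I would compute $\mathscr{F}_{2k+1}f_{\ell-k}$ by applying $F_{k-1/2}$ to $\fform f_{\ell-k}$ via Lemma \ref{lem:fourier_Hnu}, obtaining $\mathscr{F}_{2k+1}f_{\ell-k}(r) = 2^{-(k+1/2)}\Gamma(\ell-k+1)H_{\ell+1/2}(r^2/4)$. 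Squaring this and invoking the convolution theorem in the form $\mathscr{C}_{2k+1}(f_{\ell-k},f_{\ell-k}) = (2\pi)^{(2k+1)/2}\mathscr{F}_{2k+1}[(\mathscr{F}_{2k+1}f_{\ell-k})^2]$, a direct accounting of powers of $2$ and $\pi$ collapses the accumulated constant to exactly $d_\ell/c_{\ell,k}$, chaining the three expressions.

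The main obstacle is the truncated-power identity for $I_{-k}$ at half-integer $k$. For integer $k$ it is a one-line Leibniz computation; in the fractional case one must write $I_{-k} = I_{n-k}I_{-n}$ with $n = \ceil{k}$, apply the $n$-th classical derivative to reduce to $(1-2r)_+^{\ell-n}$, and then evaluate the fractional integral $I_{n-k}$ of exponent in $(0,1)$ on this truncated power. A substitution $x \mapsto (1-2r)t$ turns the resulting integral into a beta function $B(n-k,\ell-n+1)$, and the $\Gamma$-ratios from the two stages telescope to reproduce exactly the integer-case formula $I_{-k}(1-2r)_+^\ell = 2^k\Gamma(\ell+1)/\Gamma(\ell-k+1)\,(1-2r)_+^{\ell-k}$. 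Once this identity is in hand, every remaining step is essentially a verbatim reading of the proofs of Theorems \ref{thm:fourier_wu} and \ref{thm:native_space_wu}, now with $k$ permitted to be a half-integer, so that $2k+1$ ranges over all positive integers and the Wu construction extends uniformly across all integer dimensions.
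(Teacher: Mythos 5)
Your proposal is correct and follows essentially the same route as the paper: pass to $f$-form, use Proposition \ref{prop:radial_trans}(5) to commute $I_{-k}$ into the convolution, establish the truncated-power identity $I_{-k}(1-2r)_+^\ell=2^k\,\Gamma(\ell+1)/\Gamma(\ell-k+1)\,(1-2r)_+^{\ell-k}$ via a Beta-function integral, and identify the Fourier side through Lemma \ref{lem:fourier_Hnu}. The only deviations are cosmetic: you factor $I_{-k}=I_{n-k}I_{-n}$ with $n=\ceil{k}$ (derivative first, then fractional integral, which is just the paper's definition of $I_{-\nu}$) where the paper's proof uses $I_{-k}=I_{-k-1/2}I_{1/2}$, and you verify the third expression by computing $\mathscr{F}_{2k+1}f_{\ell-k}$ and invoking the convolution theorem rather than applying $F_{k-1/2}$ to the whole identity, with the constants matching $d_\ell$ either way.
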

    \begin{proof}
        For convenience we write $g_\ell(r)=\fform f_\ell(r)=(1-2r)_+^\ell$. The equalities to be verified are equivalent to
        \begin{linenomath}\begin{equation*}
            (2\pi)^{1/2}I_{-k}C_{-1/2}(g_\ell,g_\ell)=a_{\ell,k}C_{k-1/2}(g_{\ell-k},g_{\ell-k})=d_\ell F_{k-1/2}[H_{\ell+1/2}^2(t/2)],
            \end{equation*}\end{linenomath}
        where the constant $a_{\ell,k}=(2\pi)^{k+1/2}c_{\ell,k}$.
        
        For the first equality, Proposition \ref{prop:radial_trans}(5) indicates that
        \begin{equation}\label{eq:swap_I_C}
            I_{-k}C_{-1/2}(g_\ell,g_\ell)=C_{k-1/2}(I_{-k}g_\ell,I_{-k}g_\ell).
            \end{equation}
        The fractional derivative of $g_\ell$, a truncated power function, is also a truncated power function, which is
        \begin{linenomath}\begin{equation*}
            I_{-k}g_\ell(r)=2^k\frac{\Gamma(\ell+1)}{\Gamma(\ell-k+1)}g_{\ell-k}(r),\quad k\in\mathbb{N},
            \end{equation*}\end{linenomath}
        and 
        \begin{linenomath}
        \begin{equation*}
            \begin{aligned}
                I_{-k}g_\ell(r)&=I_{-k-1/2}I_{1/2}g_\ell(r)\\
                &=I_{-k-1/2}\int_r^{1/2}\frac{(x-r)^{-1/2}}{\Gamma(1/2)}(1-2x)_+^\ell\ dx\\
                &=I_{-k-1/2}\left[2^{-1/2}\frac{B(1/2,\ell+1)}{\Gamma(1/2)}(1-2r)_+^{\ell+1/2}\right]\\
                &=2^{-1/2}\frac{B(1/2,\ell+1)}{\Gamma(1/2)}I_{-k-1/2}g_{\ell+1/2}(r)\\
                &=2^k\frac{\Gamma(\ell+1)}{\Gamma(\ell-k+1)}g_{\ell-k}(r),\quad k\in\mathbb{N}/2,
                \end{aligned}
            \end{equation*}
            \end{linenomath}
        where $B(\cdot,\cdot)$ is the Beta function \cite[6.2.1]{AM64HA}. So we have
        \begin{linenomath}\begin{equation*}
            \begin{aligned}
                (2\pi)^{1/2}I_{-k}C_{-1/2}(g_\ell,g_\ell)&=\frac{\Gamma(\ell+1)^2}{\Gamma(\ell-k+1)^2}2^{2k}(2\pi)^{1/2}C_{k-1/2}(I_{-k}g_\ell,I_{-k}g_\ell)\\
                &=a_{\ell,k}C_{k-1/2}(g_{\ell-k},g_{\ell-k}).
                \end{aligned}
            \end{equation*}\end{linenomath}
        For the second equality, by applying $F_{k-1/2}$ on Eq. \eqref{eq:swap_I_C} we have
        \begin{linenomath}\begin{equation*}
            \begin{aligned}
                F_{k-1/2}I_{-k}C_{-1/2}(g_\ell,g_\ell)&=(F_{k-1/2}I_{-k}g_\ell)^2=(F_{-1/2}g_\ell)^2\\
                &=\frac{\Gamma(\ell+1)^2}{2}H_{\ell+1/2}^2(r/2),
                \end{aligned}
            \end{equation*}\end{linenomath}
        therefore
        \begin{linenomath}\begin{equation*}
            (2\pi)^{1/2}I_{-k}C_{-1/2}(g_\ell,g_\ell)=d_\ell F_{k-1/2}[H_{\ell+1/2}^2(t/2)].
            \end{equation*}\end{linenomath}
        \end{proof}
    Theorem \ref{thm:gen_wu_func} allows us to define Wu functions on any integer-dimensional space with equalities \eqref{defn:wu1}, \eqref{defn:wu2} and \eqref{defn:wu3}. The functions on odd-dimensional spaces are the rescaled Wu functions, and those on even-dimensional spaces are the rescaled \emph{missing Wu functions}. 
    \begin{rem}
        The computations in Theorem \ref{thm:gen_wu_func} hold naturally for all nonnegative real number $k\in\R_+$, so the Wu functions could be defined on `fractional'-dimensional spaces. We shall call such functions 
        \begin{linenomath}\begin{equation*}
            \varphi_{\ell,k},\quad k,\ell\in\R_+,\ k\leq\ell,
            \end{equation*}\end{linenomath}
        the \emph{generalized Wu functions}.
        \end{rem}
    
    As Eq. \eqref{defn:wu2} represents $\Phi(x)=\varphi_{\ell,k}(\norm{x}{2})$ as the self convolution of $P(x)=\sqrt{c_{\ell,k}}f_{\ell-k}(\norm{x}{2})$, Theorem \ref{thm:native_space_wu} could be generalized to all nonnegative integer dimensions.
    \begin{thm}\label{thm:native_space_gen_wu_func}
        Let $\varphi_{\ell,k}$ be a generalized Wu function with $k\in\mathbb{N}/2,\ \ell\in\R_+$ and $k\leq\ell$. Then the native space of $\Phi(x)=\varphi_{\ell,k}(\norm{x}{2})$ on $\R^{2k+1}$ is
        \begin{equation}
            \mathcal{N}_\Phi(\R^{2k+1})=P*L^2(\R^{2k+1}):=\{P*g:g\in L^2(\R^{2k+1})\},
            \end{equation}
        where 
        \begin{linenomath}\begin{equation*}
            \begin{gathered}
                P(x)=\frac{\Gamma(\ell+1)}{\Gamma(\ell-k+1)}(2/\pi)^{k/2}f_{\ell-k}(\norm{x}{2}),\\
                f_{\ell-k}(r)=(1-r^2)_+^{\ell-k}.
                \end{gathered}
        \end{equation*}\end{linenomath}
        Moreover, the convolution operator
        \begin{linenomath}\begin{equation*}
            C_P:L^2(\R^{2k+1})\to\mathcal{N}_\Phi(\R^{2k+1}),\quad C_Pg=P*g
            \end{equation*}\end{linenomath}
        is a bijective isometry.
        \end{thm}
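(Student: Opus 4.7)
The plan is to obtain this theorem as an immediate corollary of Theorem \ref{thm:native_space_conv} once we identify $\Phi$ as the self-convolution of $P$. The previous theorem (Theorem \ref{thm:native_space_wu}) handles the case $k\in\mathbb{N}$ by using Proposition \ref{prop:radial_trans} to split $\mathscr{F}_{2k+1}\varphi_{\ell,k}$ into a perfect square, but Theorem \ref{thm:gen_wu_func} has already carried out precisely this step in the full generality $k\in\mathbb{N}/2$, so no additional Fourier-side work will be needed here.

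Concretely, I would first quote equation \eqref{defn:wu2} from Theorem \ref{thm:gen_wu_func} to write $\Phi = P * P$ on $\R^{2k+1}$ with $P(x)=\sqrt{c_{\ell,k}}\, f_{\ell-k}(\|x\|_2)$, which is exactly the kernel $P$ appearing in the statement. Then I would verify the two hypotheses of Theorem \ref{thm:native_space_conv}: $(i)$ that $P\in L^1(\R^{2k+1})\cap L^2(\R^{2k+1})$, which is immediate since $P$ is continuous and compactly supported in the unit ball of $\R^{2k+1}$ for every positive integer $2k+1$; and $(ii)$ that $\hat P$ vanishes only on a Lebesgue-null set. For $(ii)$ I would read off from Theorem \ref{thm:gen_wu_func} that $\hat{P}(\omega)$ is a scalar multiple of $H_{\ell+1/2}(\|\omega\|_2^2/4)=(\|\omega\|_2/2)^{-(\ell+1/2)}J_{\ell+1/2}(\|\omega\|_2)$; since $J_{\ell+1/2}$ is real analytic with a discrete zero set on $\R_+$, the zero set of $\hat P$ in $\R^{2k+1}$ is a countable union of $2k$-dimensional spheres, hence of measure zero.

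With both hypotheses verified, Theorem \ref{thm:native_space_conv} applies directly and delivers both the convolution representation $\mathcal{N}_\Phi(\R^{2k+1}) = P*L^2(\R^{2k+1})$ and the assertion that $C_P$ is a bijective isometry. There is essentially no new obstacle to overcome: all of the analytical work — the passage to half-integer $k$ via the fractional operators $I_{-k}$ and the $f$-form tricks — has already been absorbed into Theorem \ref{thm:gen_wu_func}, and the general convolutional-kernel machinery of Theorem \ref{thm:native_space_conv} is dimension-agnostic. The present statement is therefore the straightforward transcription of the proof of Theorem \ref{thm:native_space_wu} with the range $k\in\mathbb{N}$ replaced by $k\in\mathbb{N}/2$, and its role in the paper is to record that the explicit native-space description extends uniformly to the newly introduced missing Wu functions in even dimensions.
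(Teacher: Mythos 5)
Your proposal is correct and matches the paper's intent: the paper omits this proof precisely because it is parallel to that of Theorem \ref{thm:native_space_wu}, i.e.\ write $\Phi=P*P$ (which Theorem \ref{thm:gen_wu_func}, Eq.~\eqref{defn:wu2}, already gives for $k\in\mathbb{N}/2$) and apply Theorem \ref{thm:native_space_conv}. Your explicit verification of the hypotheses of Theorem \ref{thm:native_space_conv} (compact support gives $P\in L^1\cap L^2$, and $\hat P\propto H_{\ell+1/2}(\|\omega\|_2^2/4)$ vanishes only on a null union of spheres) is exactly the routine checking the paper leaves implicit.
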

    The proof of this theorem is parallel to the proof of Theorem \ref{thm:native_space_wu} and we shall omit the proof here.

\subsection{Closed form of generalized Wu function}

    In this part we will present the closed form of generalized Wu functions. Denote the \emph{Pochhammer symbol} by
    \begin{linenomath}\begin{equation*}
        a^{(\ell)}=\begin{cases}
            1,&\ell=0,\\
            a(a+1)\dots(a+\ell-1),&\ell\geq 1.
            \end{cases}
        \end{equation*}\end{linenomath}
    Obviously if $\Gamma(a)$ is well-defined, then
    \begin{linenomath}\begin{equation*}
        a^{(\ell)}=\frac{\Gamma(a+\ell)}{\Gamma(a)}.
        \end{equation*}\end{linenomath}
    With the Pochhammer symbol, the \emph{hypergeometric function} ${_2F_1}(a,b;c;z)$ could be expanded into power series \cite{BW35GE}
    \begin{linenomath}\begin{equation*}
        {_2F_1}(a,b;c;z)=\sum_{n=0}^\infty\frac{a^{(n)}b^{(n)}}{c^{(n)}}\cdot\frac{1}{n!}z^n.
        \end{equation*}\end{linenomath}
    Note that if there is a nonpositive integer among $a$ and $b$, then the function ${_2F_1}(a,b;c;z)$ is a polynomial since the Pochhammer symbol $a^{(n)}$ vanishes for $n\geq a+1$, and thus the power expansion of this function has finitely many nonzero terms.
    \begin{thm}\label{thm:phill}
        For integer $\ell\geq 1$, the Wu function $\varphi_{\ell,\ell}$ has a closed form
        \begin{equation}\label{eq:phill}
            \varphi_{\ell,\ell}(r)=2^{\ell+1}\Gamma(\ell+1)\left[\frac{1^{(\ell)}}{(3/2)^{(\ell)}}-\frac{r}{2}\cdot{_2F_1}\left(-\ell,\frac{1}{2};\frac{3}{2};\frac{r^2}{4}\right)\right],\quad r\leq 2.
            \end{equation}
        \end{thm}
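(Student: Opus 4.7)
The plan is to specialize the identity $\varphi_{\ell,k}=c_{\ell,k}\mathscr{C}_{2k+1}(f_{\ell-k},f_{\ell-k})$ from Theorem \ref{thm:gen_wu_func} (equation (\ref{defn:wu2})) to $k=\ell$, turning $\varphi_{\ell,\ell}$ into a self-convolution of $f_0(r)=(1-r^2)_+^0=\chi_{[0,1]}(r)$. The radial extension $F_0(x)=f_0(\norm{x}{2})$ is the indicator of the closed unit ball $B\subset\R^{2\ell+1}$, so $\mathscr{C}_{2\ell+1}(f_0,f_0)(r)$ is the Lebesgue measure of the lens $B\cap(x+B)$ with $\norm{x}{2}=r$. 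Slicing the lens by the $2\ell$-dimensional hyperplanes perpendicular to the segment joining the two centers expresses this volume as
\begin{equation*}
    \mathscr{C}_{2\ell+1}(f_0,f_0)(r)=\frac{2\pi^{\ell}}{\Gamma(\ell+1)}\int_{r/2}^{1}(1-t^2)^{\ell}\,dt,\qquad 0\le r\le 2,
\end{equation*}
since each cross-section is a $2\ell$-ball of radius $\sqrt{1-t^2}$ with volume $\pi^\ell(1-t^2)^\ell/\Gamma(\ell+1)$, and by symmetry about the midpoint the two halves of the lens contribute equally. Combining with $c_{\ell,\ell}=\Gamma(\ell+1)^2(2/\pi)^\ell$ collapses the constants to
\begin{equation*}
    \varphi_{\ell,\ell}(r)=2^{\ell+1}\Gamma(\ell+1)\int_{r/2}^{1}(1-t^2)^{\ell}\,dt.
\end{equation*}

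The second step is to rewrite this antiderivative in hypergeometric form. The key lemma is the polynomial identity
\begin{equation*}
    \int_0^{x}(1-t^2)^{\ell}\,dt=x\cdot {_2F_1}\!\left(-\ell,\tfrac{1}{2};\tfrac{3}{2};x^2\right),
\end{equation*}
which I would prove by direct coefficient comparison: expanding $(1-t^2)^\ell$ by the binomial theorem and integrating term by term gives $\sum_{n=0}^{\ell}(-1)^n\binom{\ell}{n}x^{2n+1}/(2n+1)$, while the elementary Pochhammer identities $(-\ell)^{(n)}=(-1)^n\ell!/(\ell-n)!$ and $(1/2)^{(n)}/(3/2)^{(n)}=1/(2n+1)$ show that the right-hand side has exactly the same coefficients. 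Evaluating at $x=1$ and applying the Chu--Vandermonde identity ${_2F_1}(-\ell,1/2;3/2;1)=1^{(\ell)}/(3/2)^{(\ell)}$ yields $\int_0^1(1-t^2)^\ell\,dt=1^{(\ell)}/(3/2)^{(\ell)}$. Subtracting the two evaluations produces
\begin{equation*}
    \int_{r/2}^{1}(1-t^2)^{\ell}\,dt=\frac{1^{(\ell)}}{(3/2)^{(\ell)}}-\frac{r}{2}\cdot {_2F_1}\!\left(-\ell,\tfrac{1}{2};\tfrac{3}{2};\tfrac{r^2}{4}\right),
\end{equation*}
and substitution gives the formula (\ref{eq:phill}) in the statement.

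No single step is really delicate; the whole argument is bookkeeping once one notices that $\mathscr{C}_{2\ell+1}(f_0,f_0)$ is a lens volume. The only point that deserves explicit care is that Theorem \ref{thm:gen_wu_func} is being invoked at the endpoint $\ell-k=0$, where $f_0=\chi_{[0,1]}$ is only a characteristic function rather than a smooth test function; this is handled by the continuity extension noted immediately after Theorem \ref{thm:gen_walk}, or alternatively by observing that both sides of (\ref{defn:wu2}) are polynomials in $r$ on $[0,2]$ and so are uniquely determined by their distributional equality.
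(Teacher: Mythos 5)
Your proposal is correct and follows essentially the same route as the paper: specialize the convolutional representation \eqref{defn:wu2} to $k=\ell$, compute the self-convolution of the ball indicator by slicing (the paper does this via Fubini, yielding the same integral $\tfrac{2\pi^{\ell}}{\Gamma(\ell+1)}\int_{r/2}^{1}(1-t^2)^{\ell}\,dt$), identify the antiderivative of $(1-t^2)^{\ell}$ with $t\cdot{_2F_1}(-\ell,\tfrac12;\tfrac32;t^2)$ by a termwise Pochhammer computation, and evaluate at $1$ by Gauss/Chu--Vandermonde. The only differences are cosmetic (you verify the antiderivative by coefficient comparison where the paper differentiates, and you add the justified remark about $f_0$ being only a characteristic function), so nothing further is needed.
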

    \begin{proof}
        For this case, we use the convolutional representation of Wu functions \eqref{defn:wu2}
        \begin{linenomath}\begin{equation*}
            \varphi_{\ell,\ell}=\Gamma(\ell+1)^2(2/\pi)^\ell\mathscr{C}_{2\ell+1}(f_0,f_0).
            \end{equation*}\end{linenomath}
        Notice that $f_0$ is the indicator function of one-dimensional unit ball, so the function $\mathscr{C}_{2\ell+1}(f_0,f_0)$ is the self convolution of the indicator function of $2\ell+1$-dimensional unit ball. Denote 
        \begin{linenomath}\begin{equation*}
            V_d(r)=\frac{\pi^{d/2}}{\Gamma(d/2+1)}r^d
            \end{equation*}\end{linenomath}
        as the volume of a $d$-dimensional ball with radius $r$. Suppose $\alpha\in\R^{2\ell+1}$ is a unit vector. Applying Fubini's theorem leads to
        \begin{linenomath}\begin{equation*}
            \begin{aligned}
                \mathscr{C}_{2\ell+1}(f_0,f_0)(r)&=\int_{\R^{2\ell+1}}f_0(\norm{y}{2})f_0(\norm{r\alpha-y}{2})\ dy\\
                &=2\int_{r/2}^1V_{2\ell}(\sqrt{1-x^2})\ dx\\
                &=\frac{2\pi^{\ell}}{\Gamma(\ell+1)}\int_{r/2}^1(1-x^2)^\ell\ dx,\quad r\leq 2.
                \end{aligned}
            \end{equation*}\end{linenomath}
        Notice that (see Figure \ref{mma-1} in Appendix for a simple Mathematica code)
        \begin{linenomath}\begin{equation*}
            \begin{aligned}
                \frac{d}{dx}\left[x\cdot{_2F_1}\left(-n,\frac{1}{2};\frac{3}{2};x^2\right)\right]&=\frac{d}{dx}\left(\sum_{j=0}^n\frac{(-n)^{(j)}(1/2)^{(j)}}{(3/2)^{(j)}}\frac{1}{j!}x^{2j+1}\right)\\
                &=\frac{\Gamma(3/2)}{\Gamma(1/2)}\sum_{j=0}^n\frac{(-n)^{(j)}}{j+1/2}\frac{2j+1}{j!}x^{2j}\\
                &=\sum_{j=0}^n\binom{n}{j}(-x^2)^j=(1-x^2)^n,
                \end{aligned}
            \end{equation*}\end{linenomath}
        and Gauss's summation theorem \cite[Thm 1.3]{BW35GE} implies
        \begin{linenomath}\begin{equation*}
            {_2F_1}\left(-\ell,\frac{1}{2};\frac{3}{2};1\right)=\frac{\Gamma(3/2)\Gamma(\ell+1)}{\Gamma(1)\Gamma(\ell+3/2)}=\frac{1^{(\ell)}}{(3/2)^{(\ell)}},
            \end{equation*}\end{linenomath}
        so
        \begin{linenomath}\begin{equation*}
            \mathscr{C}_{2\ell+1}(f_0,f_0)(r)=\frac{2\pi^\ell}{\Gamma(\ell+1)}\left[\frac{1^{(\ell)}}{(3/2)^{(\ell)}}-\frac{r}{2}\cdot{_2F_1}\left(-\ell,\frac{1}{2};\frac{3}{2};\frac{r^2}{4}\right)\right],
            \end{equation*}\end{linenomath}
        and the Wu function $\varphi_{\ell,\ell}$ follows.
        \end{proof}
    \begin{thm}\label{thm:closed_form_wu_func}
        For $\ell\in\mathbb{N}$ and $k\in\R_+$ such that $k\leq\ell$, the generalized Wu function $\varphi_{\ell,k}$ has a closed form
        \begin{linenomath}
        \begin{multline*}
            \varphi_{\ell,k}(r)=\frac{2^{2\ell-k+1}\Gamma(\ell+1)}{\Gamma(\ell-k+1)}\left(1-\frac{r^2}{4}\right)^{\ell-k}\left[\frac{1^{(\ell)}}{(3/2)^{(\ell)}}\right.\\
            \left.-\sum_{n=0}^\ell\frac{(-\ell)^{(n)}(1/2)^{(n)}}{(3/2)^{(n)}}\frac{1}{n!}\cdot{_2F_1}\left(-n-\frac{1}{2},1;\ell-k+1;1-\frac{r^2}{4}\right)\right],\quad r\leq 2.
            \end{multline*}
        \end{linenomath}
        \end{thm}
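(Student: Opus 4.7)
The plan is to produce $\varphi_{\ell,k}$ from $\varphi_{\ell,\ell}$ (whose closed form is given by Theorem \ref{thm:phill}) by means of a fractional integration, and to evaluate the resulting Beta integrals term by term via Euler's representation of ${_2F_1}$.

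First I would verify the operator identity $\fform\varphi_{\ell,k} = I_{\ell-k}\fform\varphi_{\ell,\ell}$. This is immediate from the definition $\mathscr{D}^k = \fform^{-1} I_{-k}\fform$ together with the composition rule $I_{\ell-k}I_{-\ell} = I_{-k}$ from Proposition \ref{prop:radial_trans}(2), applied to $f_\ell*f_\ell$. Unfolding the $f$-form by the substitution $x = t^2/2$ inside the Riemann--Liouville integral for $I_{\ell-k}$, and using the compact support of $\varphi_{\ell,\ell}$ in $[0,2]$, this identity rewrites as
\begin{equation*}
\varphi_{\ell,k}(r) = \frac{1}{\Gamma(\ell-k)}\int_r^2 \left(\frac{t^2-r^2}{2}\right)^{\ell-k-1} t\,\varphi_{\ell,\ell}(t)\,dt, \qquad r\leq 2.
\end{equation*}
(When $\ell = k$ the statement is Theorem \ref{thm:phill} itself; assume $\ell > k$ in what follows.)

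Next I would change variables by $u = 1 - t^2/4$ (so that $v := 1 - r^2/4$ is the upper limit) and substitute the explicit polynomial expansion of $\varphi_{\ell,\ell}(2\sqrt{1-u})$ read off Theorem \ref{thm:phill}: a constant term proportional to $1^{(\ell)}/(3/2)^{(\ell)}$ and a sum of terms proportional to $(1-u)^{n+1/2}$. The constant term yields $\int_0^v (v-u)^{\ell-k-1}\,du = v^{\ell-k}/(\ell-k)$ directly. For each of the others, one more substitution $s = (v-u)/v$ recasts the integral in the form
\begin{equation*}
v^{\ell-k}(1-v)^{n+1/2}\int_0^1 s^{\ell-k-1}(1-zs)^{n+1/2}\,ds, \qquad z := \frac{-v}{1-v},
\end{equation*}
and Euler's integral representation identifies this with $\frac{v^{\ell-k}(1-v)^{n+1/2}}{\ell-k}\,{_2F_1}(-n-\frac{1}{2},\ell-k;\ell-k+1;z)$. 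A single application of Pfaff's transformation ${_2F_1}(a,b;c;w) = (1-w)^{-a}\,{_2F_1}(a,c-b;c;w/(w-1))$ (with $w = z$) absorbs the factor $(1-v)^{n+1/2}$ and maps the argument from $z$ to $v = 1 - r^2/4$, producing $\frac{v^{\ell-k}}{\ell-k}\,{_2F_1}(-n-\frac{1}{2},1;\ell-k+1;v)$.

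Finally, assembling the pieces and combining the prefactors $2^{\ell-k}/\Gamma(\ell-k)$ (from the Jacobian of $u = 1 - t^2/4$), $2^{\ell+1}\Gamma(\ell+1)$ (from Theorem \ref{thm:phill}), and $1/(\ell-k)$ (from each integral) produces the coefficient $2^{2\ell-k+1}\Gamma(\ell+1)/\Gamma(\ell-k+1)$, and the factor $v^{\ell-k} = (1-r^2/4)^{\ell-k}$ pulls out of the bracket, yielding exactly the formula in the statement. The delicate point will be the Euler--Pfaff step: landing on a hypergeometric with argument precisely $1 - r^2/4$ (rather than $r^2/4$ or $-v/(1-v)$) forces the specific parameter triple $(-n-\frac{1}{2},1,\ell-k+1)$, and this is where the real content of the calculation lies; everything else is routine substitution and Pochhammer--Gamma bookkeeping.
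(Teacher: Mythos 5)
Your proposal is correct and follows essentially the same route as the paper: write $\fform\varphi_{\ell,k}=I_{\ell-k}\fform\varphi_{\ell,\ell}$, expand the closed form of $\varphi_{\ell,\ell}$ from Theorem \ref{thm:phill} term by term, and evaluate each resulting fractional integral of a power via Euler's integral representation of ${}_2F_1$, which is exactly the paper's computation. The only difference is bookkeeping: the paper's substitutions put the factor $(1-x)^{\alpha-1}$ in the $(1-s)^{c-b-1}$ slot of Euler's integral (so $b=1$), landing directly on the argument $1-r/2$ without any transformation, whereas your choice ($b=\ell-k$, $c-b-1=0$) reaches the same hypergeometric only after an additional Pfaff transformation --- both are valid.
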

    \begin{proof}
        For this case, we use the differential representation of Wu functions \eqref{defn:wu1}
        \begin{linenomath}\begin{equation*}
            \varphi_{\ell,k}=\mathscr{D}^k(f_\ell*f_\ell).
            \end{equation*}\end{linenomath}
        The $f$-form of $\varphi_{\ell,k}$ is
        \begin{linenomath}\begin{equation*}
            \fform\varphi_{\ell,k}=I_{-k}\fform(f_\ell*f_\ell)=I_{\ell-k}I_{-\ell}\fform(f_\ell*f_\ell)=I_{\ell-k}\fform\varphi_{\ell,\ell},
            \end{equation*}\end{linenomath}
        so we just need to apply $I_{\ell-k}$ on $\fform\varphi_{\ell,\ell}$ to get the desired function. The $f$-form of $\varphi_{\ell,\ell}$ is
        \begin{linenomath}\begin{equation*}
            \begin{aligned}
                \fform\varphi_{\ell,\ell}(r)&=2^{\ell+1}\Gamma(\ell+1)\left[\frac{1^{(\ell)}}{(3/2)^{(\ell)}}-\sqrt{\frac{r}{2}}\cdot{_2F_1}\left(-\ell,\frac{1}{2};\frac{3}{2};\frac{r}{2}\right)\right]\\
                &=2^{\ell+1}\Gamma(\ell+1)\left[\frac{1^{(\ell)}}{(3/2)^{(\ell)}}-\sum_{n=0}^\ell\frac{(-\ell)^{(n)}(1/2)^{(n)}}{(3/2)^{(n)}}\frac{1}{n!}\left(\frac{r}{2}\right)^{n+1/2}\right].
                \end{aligned}
            \end{equation*}\end{linenomath}
        According to Euler's integral for hypergeometric functions \cite[Thm 1.5]{BW35GE}, for $\alpha\geq 0$, the action of $I_\alpha$ on powers of $r/2$ are (see Figure \ref{mma-1} in Appendix for a simple Mathematica code)
        \begin{linenomath}\begin{equation*}
            \begin{aligned}
                \int_r^2\frac{(x-r)^{\alpha-1}}{\Gamma(\alpha)}\left(\frac{x}{2}\right)^n\ dx&=\frac{1}{\Gamma(\alpha)}\int_0^{2-r}x^{\alpha-1}\left(\frac{x+r}{2}\right)^n\ dx\\
                &=\frac{(2-r)^\alpha}{\Gamma(\alpha)}\int_0^1x^{\alpha-1}\left[\frac{(2-r)x+r}{2}\right]^n\ dx\\
                &=\frac{(2-r)^\alpha}{\Gamma(\alpha)}\int_0^1(1-x)^{\alpha-1}\left[1-\left(1-\frac{r}{2}\right)x\right]^n\ dx\\
                &=\frac{(2-r)^\alpha}{\Gamma(\alpha+1)}\cdot{_2F_1}\left(-n,1;\alpha+1;1-\frac{r}{2}\right),\quad r\leq 2,
                \end{aligned}
            \end{equation*}\end{linenomath}
        so the action of $I_{\ell-k}$ on $\fform\varphi_{\ell,\ell}$ is
        \begin{linenomath}
        \begin{multline*}
            I_{\ell-k}\fform\varphi_{\ell,\ell}(r)=\frac{2^{2\ell-k+1}\Gamma(\ell+1)}{\Gamma(\ell-k+1)}\left(1-\frac{r}{2}\right)^{\ell-k}\left[\frac{1^{(\ell)}}{(3/2)^{(\ell)}}\right.\\
            \left.-\sum_{n=0}^\ell\frac{(-\ell)^{(n)}(1/2)^{(n)}}{(3/2)^{(n)}}\frac{1}{n!}\cdot{_2F_1}\left(-n-\frac{1}{2},1;\ell-k+1;1-\frac{r}{2}\right)\right],\quad r\leq 2,
            \end{multline*}
        \end{linenomath}
        and $\varphi_{\ell,k}$ follows by substituting $r$ with $r^2/2$.
        \end{proof}
    
    Some Wu functions and missing Wu functions are listed in Table \ref{table:wu_func}. The functions in Table \ref{table:wu_func} are rescaled to be supported on the unit interval. These functions are computed with Mathematica code in Figure \ref{mma-2}.

    For nonnegative half integers $k$ and nonnegative integers $\ell$, the generalized Wu functions $\varphi_{\ell,k}$, which are also called the missing Wu functions, share a common representation
    \begin{equation}\label{eq:closed_form_missing_wu_func}
        \varphi_{\ell,k}(2r)=p_{\ell,k}(2r)L(2r)+q_{\ell,k}(2r)S(2r)
        \end{equation}
    with polynomials $p_{\ell,k}(2r), q_{\ell,k}(2r)$ and 
    \begin{linenomath}\begin{equation*}
        \begin{gathered}
            L(2r)=\log\left(\frac{r}{1+\sqrt{1-r^2}}\right)=-\tanh^{-1}\left(\sqrt{1-r^2}\right),\\
            S(2r)=\sqrt{1-r^2}.
            \end{gathered}
        \end{equation*}\end{linenomath}
    Note that here we rescale the support of functions to the unit interval $[0,1]$. Here we do not give a rigorous proof for Eq. \eqref{eq:closed_form_missing_wu_func}, but it could be proved by induction that the functions ${_2F_1}(-n-1/2,1;\ell-k+1;1-r/2)$ has the same representation as Eq. \eqref{eq:closed_form_missing_wu_func}.
    
    The rescaled missing Wu functions shares a similar representation with the missing Wendland functions. In fact, we have the following expression for (missing) Wu functions in terms of (generalized) Wendland functions $\psi_{\mu,\alpha}^{wd}$ \cite{SR11MI}.
    
\section{Connection between Wu and Wendland functions}
    \begin{thm}\label{thm:wu_wendland}
        Let $\varphi_{\ell,k}$ denote the generalized Wu functions and $\psi_{\mu,\alpha}^{wd}$ denote the generalized Wendland functions. Then the equality
        \begin{linenomath}\begin{equation*}
            \varphi_{\ell,k}(2r)=2^{2\ell-k+1}\Gamma(\ell+1)\sum_{n=0}^\ell\binom{\ell}{n}\frac{1}{\ell+n+1}2^{\ell-n}(-1)^n\psi_{\ell+n+1,\ell-k}^{wd}(r)
            \end{equation*}\end{linenomath}
        holds for $k\in\mathbb{N}/2,\ \ell\in\mathbb{N}$ such that $\ell\geq k$. 
        \end{thm}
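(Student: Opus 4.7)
My plan is to reduce Theorem~\ref{thm:wu_wendland} to the base case $k=\ell$ (where the Wendland functions collapse to ordinary truncated powers $(1-r)_+^\mu$) and then apply a single fractional integral operator $\mathscr{I}^{\ell-k}$ that simultaneously sends $\varphi_{\ell,\ell}$ to $\varphi_{\ell,k}$ and $\psi_{\mu,0}^{wd}$ to $\psi_{\mu,\ell-k}^{wd}$. First I would use the convolutional representation $\varphi_{\ell,\ell}=c_{\ell,\ell}\,\mathscr{C}_{2\ell+1}(f_0,f_0)$ from Theorem~\ref{thm:gen_wu_func} together with the volume-of-intersection integral $\mathscr{C}_{2\ell+1}(f_0,f_0)(r)=\tfrac{2\pi^\ell}{\Gamma(\ell+1)}\int_{r/2}^{1}(1-x^2)^\ell\,dx$ already derived in the proof of Theorem~\ref{thm:phill}. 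The key algebraic step is the finite binomial identity
\[
(1-x^2)^\ell=(1-x)^\ell\bigl(2-(1-x)\bigr)^\ell=\sum_{n=0}^{\ell}(-1)^n\binom{\ell}{n}2^{\ell-n}(1-x)^{\ell+n},
\]
which reduces the integrand to elementary powers; termwise integration and the substitution $r\mapsto 2r$ yield
\[
\varphi_{\ell,\ell}(2r)=2^{\ell+1}\Gamma(\ell+1)\sum_{n=0}^{\ell}\binom{\ell}{n}\frac{(-1)^n2^{\ell-n}}{\ell+n+1}(1-r)_+^{\ell+n+1},
\]
which is exactly the claim at $k=\ell$, since $\psi_{\mu,0}^{wd}(r)=(1-r)_+^\mu$.

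For general $k$, Definition~\eqref{defn:rescaled_wu} and Proposition~\ref{prop:radial_trans}(2) give $\fform\varphi_{\ell,k}=I_{-k}\fform(f_\ell*f_\ell)=I_{\ell-k}\fform\varphi_{\ell,\ell}$, i.e.\ the operator identity $\varphi_{\ell,k}=\mathscr{I}^{\ell-k}\varphi_{\ell,\ell}$ with $\mathscr{I}^{\alpha}:=\fform^{-1}I_{\alpha}\fform$. A direct change of variable inside $I_{\alpha}\fform$ yields the pleasant integral representation
\[
\mathscr{I}^{\alpha}\varphi(r)=\frac{2^{1-\alpha}}{\Gamma(\alpha)}\int_{r}^{\infty}t(t^2-r^2)^{\alpha-1}\varphi(t)\,dt,
\]
and the substitution $u=ct$ then gives the scaling rule $\mathscr{I}^{\alpha}[\varphi(c\,\cdot)](r)=c^{-2\alpha}(\mathscr{I}^{\alpha}\varphi)(cr)$ for every $c>0$ and $\alpha>0$. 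Taking $c=1/2$ and $\alpha=\ell-k$, and using Schaback's normalization $\psi_{\mu,\alpha}^{wd}(r)=2^{\alpha}\mathscr{I}^{\alpha}(1-\cdot)_+^{\mu}(r)$ from \cite{SR11MI} (consistent with the base case $\psi_{\mu,0}^{wd}(r)=(1-r)_+^\mu$), one obtains the one-line transport identity
\[
\mathscr{I}^{\ell-k}\bigl[(1-\tfrac{\cdot}{2})_+^{\mu}\bigr](2r)=4^{\ell-k}\mathscr{I}^{\ell-k}(1-\cdot)_+^{\mu}(r)=2^{\ell-k}\psi_{\mu,\ell-k}^{wd}(r).
\]
Applying $\mathscr{I}^{\ell-k}$ termwise to the expansion of $\varphi_{\ell,\ell}$ (before the $r\mapsto 2r$ rescaling) and then rescaling introduces exactly one factor $2^{\ell-k}$ per summand, upgrading the overall prefactor $2^{\ell+1}$ to $2^{2\ell-k+1}$ and reproducing the Wendland coefficients $\binom{\ell}{n}\tfrac{1}{\ell+n+1}2^{\ell-n}(-1)^n$ claimed in the theorem.

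The main obstacle I anticipate is bookkeeping the many powers of $2$ that come from three independent sources that interact: the hidden substitution $r\mapsto\sqrt{2r}$ inside $\fform$, the support $[0,2]$ of $\varphi_{\ell,\ell}$ which forces its Step~1 expansion to involve $(1-r/2)_+^{\ell+n+1}$ rather than $(1-r)_+^{\ell+n+1}$, and the factor $2^{\alpha}$ absorbed into Schaback's normalization of $\psi_{\mu,\alpha}^{wd}$. Once the scaling rule for fractional $\mathscr{I}^{\alpha}$ is verified by the direct change-of-variable argument above (which is identical for integer and half-integer $\alpha$), everything else reduces to a clean termwise computation, and the theorem drops out.
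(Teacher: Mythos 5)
Your overall route is the same as the paper's: you first prove the case $k=\ell$ from the convolutional representation \eqref{defn:wu2} by writing $(1-x^2)^\ell=(1-x)^\ell\bigl(2-(1-x)\bigr)^\ell$ and integrating termwise (this is literally the paper's first step, and your expansion and constants there are correct), and you then transport the identity to general $k$ by applying $\mathscr{I}^{\ell-k}:=\fform^{-1}I_{\ell-k}\fform$ termwise, which is the paper's second step. Your integral representation $\mathscr{I}^{\alpha}\varphi(r)=\frac{2^{1-\alpha}}{\Gamma(\alpha)}\int_r^{\infty}t(t^2-r^2)^{\alpha-1}\varphi(t)\,dt$ and the dilation rule $\mathscr{I}^{\alpha}[\varphi(c\,\cdot)](r)=c^{-2\alpha}(\mathscr{I}^{\alpha}\varphi)(cr)$ are both correct.

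The gap sits exactly where you feared: the powers of $2$. Your final constant rests entirely on the unverified claim $\psi^{wd}_{\mu,\alpha}=2^{\alpha}\mathscr{I}^{\alpha}(1-\cdot)_+^{\mu}$, which you attribute to \cite{SR11MI} but do not check. It is not the convention the paper itself uses: in the paper's proof the generalized Wendland function is obtained as $\fform^{-1}I_{\ell-k}\fform\,\psi^{wd}_{\mu,0}=\psi^{wd}_{\mu,\ell-k}$ with no extra factor, i.e.\ $\psi^{wd}_{\mu,\alpha}=\mathscr{I}^{\alpha}(1-\cdot)_+^{\mu}=\frac{2^{1-\alpha}}{\Gamma(\alpha)}\int_r^1 t(1-t)^{\mu}(t^2-r^2)^{\alpha-1}\,dt$, the normalization that reduces to the classical construction $\mathscr{I}^{k}(1-\cdot)_+^{\mu}$ at integer $\alpha$ (so that, e.g., $\psi^{wd}_{3,1}(r)=\int_r^1t(1-t)^3\,dt=\frac{1}{20}(1-r)^4(4r+1)$). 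Under that convention your own (correct) dilation rule produces the prefactor $2^{\ell+1}\cdot4^{\ell-k}=2^{3\ell-2k+1}$, not the stated $2^{2\ell-k+1}$; the paper instead lands on $2^{2\ell-k+1}$ by invoking the Section~3.1 scaling identity $\fform[\varphi(ct)]=(\fform\varphi)(ct)$, which is incompatible with your rule because in fact $\fform[\varphi(ct)]=(\fform\varphi)(c^2t)$. So an unresolved factor $2^{\ell-k}$ is hidden inside your normalization assumption: for $\ell=1$, $k=0$ a direct computation gives $\varphi_{1,0}(2r)=\frac{16}{15}(1-5r^2+5r^3-r^5)$, whereas $8\psi^{wd}_{2,1}(r)-\frac{8}{3}\psi^{wd}_{3,1}(r)=\frac{8}{15}(1-5r^2+5r^3-r^5)$ under the standard normalization. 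You must either substantiate the $2^{\alpha}$ convention from \cite{SR11MI} (in which case your argument closes, but then it silently corrects the paper's own scaling identity), or accept that with the standard normalization your computation proves the identity with constant $2^{3\ell-2k+1}$ rather than the one stated; this step cannot be left as an unchecked citation, since beyond the $k=\ell$ base case the entire content of the theorem is this constant.
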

    \begin{proof}
        For $0\leq r\leq 1$, the representation \eqref{defn:wu2} implies
        \begin{linenomath}\begin{equation*}
            \begin{aligned}
                \varphi_{\ell,\ell}(2r)&=2^{\ell+1}\Gamma(\ell+1)\int_r^1(1-x)^\ell(1+x)^\ell\ dx\\
                &=2^{\ell+1}\Gamma(\ell+1)\cdot(-1)^\ell\sum_{n=0}^\ell\binom{\ell}{n}(-2)^{\ell-n}\int_r^1(1-x)^{\ell+n}\ dx\\
                &=2^{\ell+1}\Gamma(\ell+1)\sum_{n=0}^\ell\binom{\ell}{n}\frac{1}{\ell+n+1}2^{\ell-n}(-1)^n(1-r)^{\ell+n+1}\\
                &=2^{\ell+1}\Gamma(\ell+1)\sum_{n=0}^\ell\binom{\ell}{n}\frac{1}{\ell+n+1}2^{\ell-n}(-1)^n\psi_{\ell+n+1,0}^{wd}(r).
                \end{aligned}
            \end{equation*}\end{linenomath}
        Notice that for nonnegative integers $k\leq\ell$,
        \begin{linenomath}\begin{equation*}
            I_{\ell-k}\fform[\varphi_{\ell,\ell}(2t)](r)=I_{\ell-k}[\fform\varphi_{\ell,\ell}(2t)](r)=2^{k-\ell}I_{\ell-k}\fform\varphi_{\ell,\ell}(2r)=2^{k-\ell}\fform\varphi_{\ell,k}(2r),
            \end{equation*}\end{linenomath}
        so
        \begin{linenomath}\begin{equation*}
            \begin{aligned}
                \varphi_{\ell,k}(2r)&=2^{\ell-k}\fform^{-1}I_{\ell-k}\fform[\varphi_{\ell,\ell}(2t)](r)\\
                &=2^{2\ell-k+1}\Gamma(\ell+1)\sum_{n=0}^\ell\binom{\ell}{n}\frac{1}{\ell+n+1}2^{\ell-n}(-1)^n\fform^{-1}I_{\ell-k}\fform\psi_{\ell+n+1,0}^{wd}(r)\\
                &=2^{2\ell-k+1}\Gamma(\ell+1)\sum_{n=0}^\ell\binom{\ell}{n}\frac{1}{\ell+n+1}2^{\ell-n}(-1)^n\psi_{\ell+n+1,\ell-k}^{wd}(r),
                \end{aligned}
            \end{equation*}\end{linenomath}
        according to the definition of the generalized Wendland function \cite{SR11MI}.
        \end{proof}

    \section{Discussion}

    This paper aims to derive an explicit characterization of the native spaces of Wu functions and to generalize the Wu functions such that ``the missing Wu functions" in even-dimensional spaces can be found. To this end, we proposed several questions in the introduction, each concerning a specific problem related to Wu functions. Since the RBF theory provides a unique perspective to understand the learning algorithms \cite{PJ93AP,HC93AP,ZD07LE}, especially the multilayer networks \cite{PT90RE}, answering these questions not only provide a mathematical rigor, but may also provide some potential tools or insights to approximation and learning theory. 
    
    The Bochner's characterization based on Fourier transform and the reproducing kernel Hilbert space theory on equivalent kernels serve as the basic tools to study native spaces: whether one native space includes another depends on whether its kernel is `greater' than that of another space. The difficulty lies in how to identify the Fourier transform of Wu functions; and how to compare the Fourier transform of a Wu function and that of the kernel of a Sobolev space --- an inverse multiquadric. By comparing the kernels, we conclude that the native space of a Wu function is a dense subspace of a Sobolev space. Such a comparison largely depends on the property of some special functions and their Fourier transforms.

    The $f$-form operator significantly simplifies many operations on radial functions and their Fourier transforms. Moreover, the $f$-form operator clarifies the RBF construction strategy and results in the more accessible $D:=I_{-1}$ and $I_{1}$ operators in Proposition 2.2. They enjoy better dimensional mobility: they can ``walk" between all integer-dimensional spaces, rather than only odd-dimensional spaces. Such a dimensional mobility between all spaces makes it possible to find the missing Wu functions and all the generalized Wu functions. In fact both Wu's and Wendland's constructions were partially published, since the ``missing functions" in even-dimensional spaces involve non-polynomial terms, they were not so welcomed as polynomials and thus not published instantly after being noticed.
    
    The closed form of the generalized Wu functions largely depends on some sophisticated mathematics involving the hypergeometric functions. Such fundamental and classical function theory should have received more attention. The relationship between Wu functions and Wendland functions are revealed by linking and comparing their closed forms. Finally we want to mention that the theory on Wu functions is not yet completed. One of the interesting questions is whether it is possible construct CSRBFs in terms of polynomials in the even dimensional spaces. 

\Acknowledgements{This work is funded by the natural science foundation of China (12271047); Guangdong Provincial Key Laboratory of Interdisciplinary Research and Application for Data Science, BNU-HKBU United International College (2022B1212010006); UIC research grant (R0400001-22; UICR0400008-21; R72021114); Guangdong College Enhancement and Innovation Program (2021ZDZX1046).}




\bibliographystyle{abbrv}
\bibliography{refs}

\begin{appendix}

\section{}

    \begin{figure}[H]
        \centering
        \vspace{-0.5cm}
        \begin{minipage}{1\linewidth}
            \centering
            \includegraphics[width=1\textwidth]{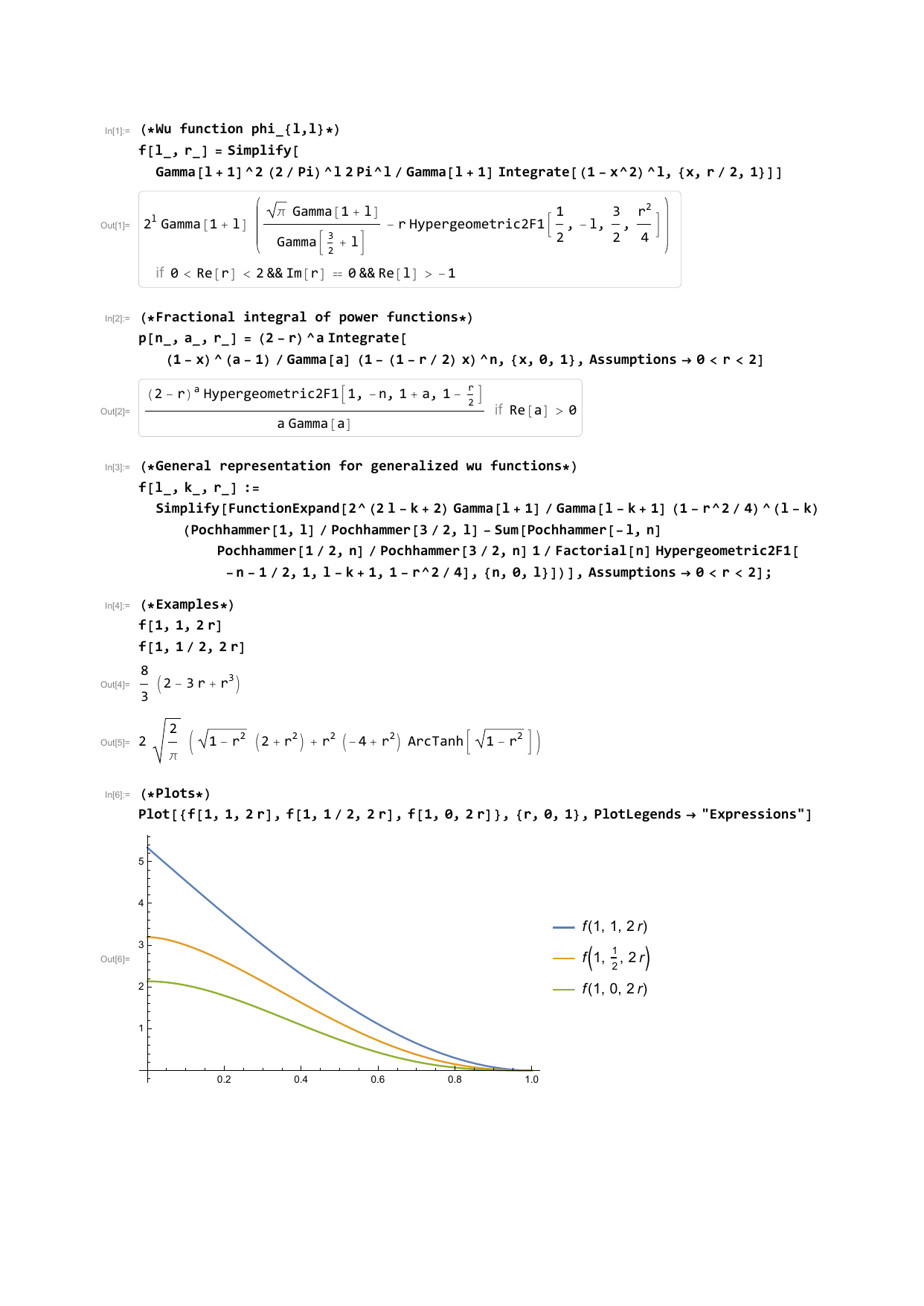}
            \caption{Mathematica code for computing $\varphi_{\ell,\ell}$ and the fractional integral of power functions}
            \label{mma-1}
            \end{minipage}
        \begin{minipage}{1\linewidth}
            \centering
            \includegraphics[width=1\textwidth]{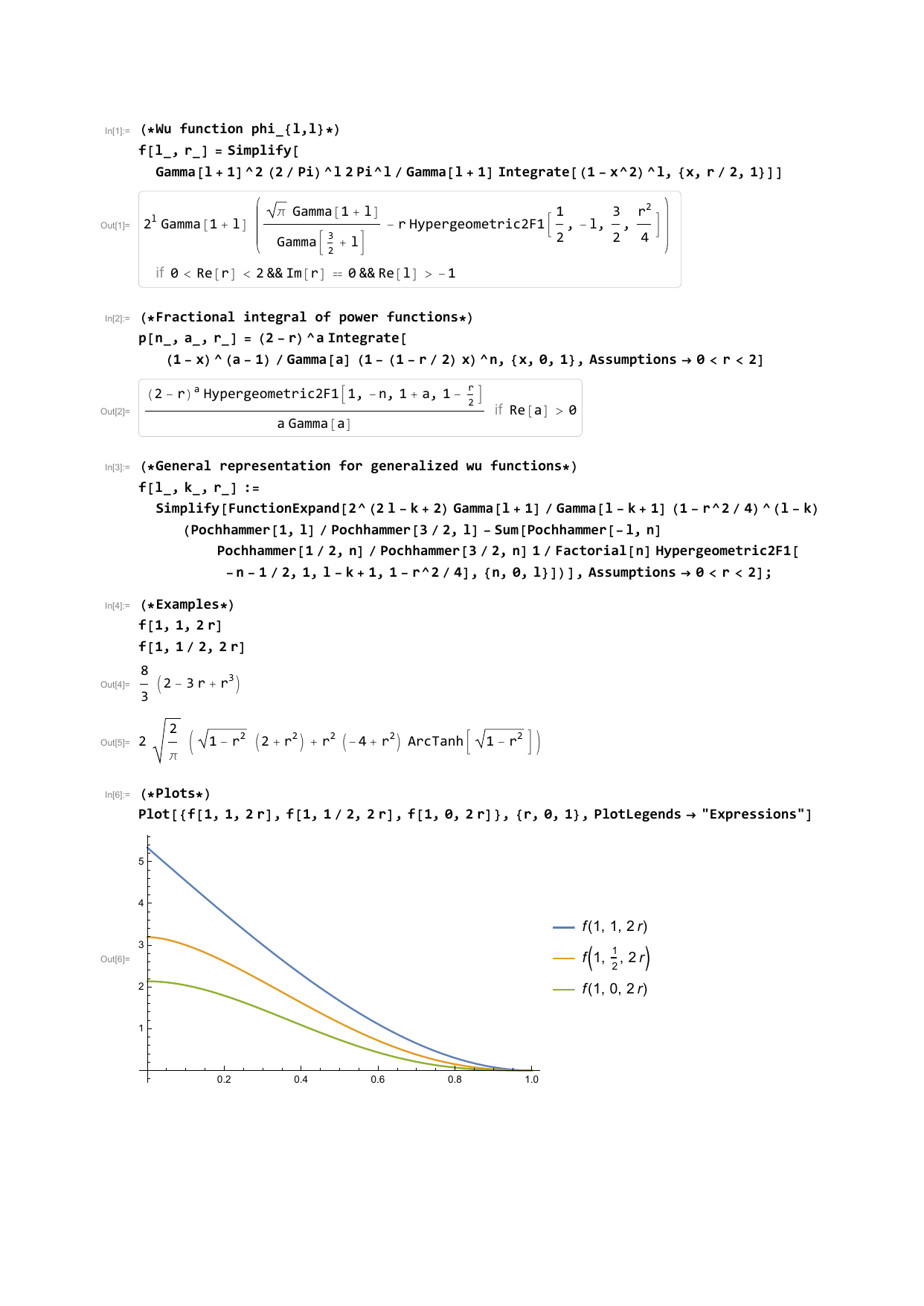}
            \caption{Mathematica code for computing generalized Wu functions}
            \label{mma-2}
            \end{minipage}
            \vspace{-1cm}
        \end{figure}

    \begin{table}[H]
        \caption{Some small positive zeros of $J_\nu$.}
        \label{table:zeros_jnu}
        \centering
        \begin{tabular}{c|cccccc}
            \toprule
            $j_{\nu,k}$ & $k=1$ & $k=2$ & $k=3$ & $k=4$ & $k=5$ & $k=6$ \\
            \midrule
            $\nu=0.0$ & 2.40483 & 5.52008 & 8.65373 & 11.7915 & 14.9309 & 18.0711 \\
            $\nu=0.5$ & 3.14159 & 6.28319 & 9.42478 & 12.5664 & 15.708 & 18.8496 \\
            $\nu=1.0$ & 3.83171 & 7.01559 & 10.1735 & 13.3237 & 16.4706 & 19.6159 \\
            $\nu=1.5$ & 4.49341 & 7.72525 & 10.9041 & 14.0662 & 17.2208 & 20.3713 \\
            $\nu=2.0$ & 5.13562 & 8.41724 & 11.6198 & 14.796 & 17.9598 & 21.117 \\
            $\nu=2.5$ & 5.76346 & 9.09501 & 12.3229 & 15.5146 & 18.689 & 21.8539 \\
            \midrule
            \multicolumn{7}{l}{*$j_{\nu,k}$ denotes the $k$-th positive zero of $J_\nu$.}\\
            \bottomrule
            \end{tabular}
        \end{table}

    \linespread{1.5}
    \begin{table}[H]
        \centering
        \caption{Rescaled Wu functions and missing Wu functions.}
        \label{table:wu_func}
        \begin{tabular}{c|l}
            \toprule
            $d$ & $\varphi_{\ell,k}(2r)=\varphi_{\ell,(d-1)/2}(2r),r\leq 1$. $\varphi_{\ell,k}(r)$ denotes the generalized Wu function.\\
            \midrule
            \multirow{3}{*}{$d=1$}  & $\varphi_{0,0}(2r)\quad\!=4(1-r)$                                        \\
                                    & $\varphi_{1,0}(2r)\quad\!=\frac{32}{15}(1-r)^3(1+3r+r^2)$                \\
                                    & $\varphi_{2,0}(2r)\quad\!=\frac{512}{315}(1-r)^5(1+5r+9r^2+5r^3+r^4)$    \\
            \midrule
            \multirow{5}{*}{$d=2$}  & $\varphi_{1,1/2}(2r)=2\sqrt{\frac{2}{\pi}}[(2+r^2)S(2r)+r^2(4-r^2)L(2r)]$                                    \\
                                    & $\varphi_{2,1/2}(2r)=\frac{2}{9}\sqrt{\frac{2}{\pi}}[(16-88r^2-42r^4+9r^6)S(2r)$\\
                                    & \multicolumn{1}{r}{}$+3r^4(-48+16r^2-3r^4)L(2r)]$   \\
                                    & $\varphi_{3,1/2}(2r)=\frac{2}{75}\sqrt{\frac{2}{\pi}}[(128-896+3168r^4+1480r^6-490r^8+75r^{10})S(2r)$        \\
                                    & \multicolumn{1}{r}{}$+15r^6(320-120r^2+36r^4-5r^6)L(2r)]$                                                 \\
            \midrule
            \multirow{3}{*}{$d=3$}  & $\varphi_{1,1}(2r)\quad\!=\frac{8}{3}(1-r)^2(2+r)$                                   \\
                                    & $\varphi_{2,1}(2r)\quad\!=\frac{128}{105}(1-r)^4(4+16r+12r^2+3r^3)$                  \\
                                    & $\varphi_{3,1}(2r)\quad\!=\frac{1024}{1155}(1-r)^6(6+36r+82r^2+72r^3+30r^4+5r^5)$    \\
            \midrule
            \multirow{6}{*}{$d=4$}  & $\varphi_{2,3/2}(2r)=\frac{4}{3}\sqrt{\frac{2}{\pi}}[(8+10r^2-3r^4)S(2r)+3r^2(8-4r^2+r^4)L(2r)]$ \\
                                    & $\varphi_{3,3/2}(2r)=\frac{2}{5}\sqrt{\frac{2}{\pi}}[(32-224r^2-188r^4+80r^6-15r^8)S(2r)$        \\
                                    & \multicolumn{1}{r}{} $+15r^4(-32+16r^2-6r^4+r^6)L(2r)]$                                       \\
                                    & $\varphi_{4,3/2}(2r)=\frac{8}{525}\sqrt{\frac{2}{\pi}}[q_{4,3/2}(2r)S(2r)+p_{4,3/2}(2r)L(2r)]$   \\
                                    & $q_{4,3/2}(2r)\,=1024-8448r^2+36224r^4+25520r^6-12600r^8+3850r^{10}-525r^{12}$                  \\
                                    & $p_{4,3/2}(2r)\,=105r^6(640-320r^2+144r^4-40r^6+5r^8)$                                          \\
            \midrule
            \multicolumn{2}{l}{*$S(2r)=\sqrt{1-r^2},\ L(2r)=\log\left(\frac{r}{1+\sqrt{1-r^2}}\right)$}\\
            \bottomrule
            \end{tabular}
        \end{table}
    
\end{appendix}

\end{document}